\documentclass[a4paper,11pt] {amsart}
\usepackage{amssymb,latexsym,amsmath}
\usepackage{amsfonts,amsbsy,bm}
\usepackage{color}
\pagestyle{myheadings}
\headsep 20pt \oddsidemargin 0 in \topmargin -10pt
\evensidemargin -0 in \textwidth 6in \textheight 8.4in

\def\a{\alpha}

\def\ga{\gamma}

\def\ba{\beta}

\newcommand{\bprop} {\begin{proposition}}
\newcommand{\eprop} {\end{proposition}}
\newcommand{\btheo} {\begin{theorem}}
\newcommand{\etheo} {\end{theorem}}
\newcommand{\blem} {\begin{lemma}}
\newcommand{\elem} {\end{lemma}}
\newcommand{\bcor} {\begin{corollary}}
\newcommand{\ecor} {\end{corollary}}

\newcommand{\Be}{\begin{equation}}
\newcommand{\Ee}{\end{equation}}
\newcommand{\Bea}{\begin{eqnarray}}
\newcommand{\Eea}{\end{eqnarray}}
\newcommand{\Bes}{\begin{equation*}}
\newcommand{\Ees}{\end{equation*}}
\newcommand{\Beas}{\begin{eqnarray*}}
\newcommand{\Eeas}{\end{eqnarray*}}
\newcommand{\Ba}{\begin{array}}
\newcommand{\Ea}{\end{array}}
\def\R{\mathbb{R}}

\begin{document}
\theoremstyle{plain}
\newtheorem{theorem}{Theorem}[section]
\newtheorem{corollary}[theorem]{Corollary}
\newtheorem{lemma}[theorem]{Lemma}
\newtheorem{proposition}[theorem]{Proposition}

\theoremstyle{definition}
\newtheorem{definition}[theorem]{Definition}
\newtheorem{example}[theorem]{Example}

\theoremstyle{remark}
\newtheorem{remark}[theorem]{Remark}
\title[Hilbert-type and Bergman-type operators]{Boundedness of a family of Hilbert-type operators and of its Bergman-type analogue}
\author{Justice S. Bansah and Beno\^it F. Sehba}
\address{Department of Mathematics, University of Ghana,\\ P. O. Box LG 62 Legon, Accra, Ghana}
\email{fjaccobian@gmail.com}
\address{Department of Mathematics, University of Ghana,\\ P. O. Box LG 62 Legon, Accra, Ghana}
\email{bfsehba@ug.edu.gh}

\subjclass[2010]{Primary 47B34, 26D15; Secondary 28A25}

\keywords{Bergman projection, Hilbert operator, Upper-half plane}
\maketitle
\begin{abstract}
In this paper, we first consider boundedness
properties of a family of operators generalizing the Hilbert operator in the upper triangle case. In the diagonal case, we give the exact norm of these operators under some restrictions on the parameters.    We secondly consider boundedness properties of a family of positive Bergman-type operators of the upper-half plane. We give necessary and sufficient conditions on the parameters under which these operators are bounded in the upper triangle case.
\end{abstract}
\section{Introduction}
Let $1\le p<\infty$, and $a>-1$. We write $L_a^p((0,\infty))$ or simply $L_a^p$ for the Lebesgue space $L^p\left((0,\infty), y^ady\right)$. When $a=0$, we simply write $L^p$ for the corresponding space. We use the notion 
$$\|f\|_{p,a}:=\|f\|_{L_a^p}:=\left(\int_0^\infty |f(x)|^px^adx\right)^{\frac{1}{p}}$$
and when $a=0$, we simply write $\|f\|_{p}$ for $\|f\|_{p,0}$.
\vskip .2cm
We recall that the Hilbert operator is defined by $$Hf(x):=\int_0^\infty\frac{f(y)}{x+y}\,dy.$$  It is well known that the operator $H$ is bounded on $L^p((0,\infty))$ for $1<p<\infty$ and that its norm is given by $$\|H\|_{L^p\rightarrow L^p}=\frac{\pi}{\sin\left(\frac{\pi}{p}\right)}$$
(see \cite{Hardy, HLP}). For more on the Hilbert operator, its boundedness, some generalizations and applications, we refer to the following and the references therein \cite{MPF, Byang1, Byang2, Byang3}.
\vskip .2cm
Let $\alpha, \beta, \gamma$ be real parameters. Consider the family of operators $H_{\alpha,\beta,\gamma}$ defined for compactly supported functions by
\begin{equation}\label{eq:hilbertgenedef}
H_{\alpha,\beta,\gamma}f(x):=x^\alpha\int_0^\infty\frac{f(y)}{(x+y)^\gamma}y^\beta \mathrm{d}y.
\end{equation}
The above operators clearly generalize the Hilbert operator as $H=H_{0,0,1}$.

 In the first part of this note, we consider the continuity properties of the operators $H_{\alpha,\beta,\gamma}$ from $L_a^p$ to $L_b^q$, with $1\le p\le q<\infty$. That is we give the relations between $p,q,a,b,\alpha,\beta,\gamma$ under which these operators are bounded. Restricting ourself to the case $\gamma=\alpha+\beta+1$ and $p=q<\infty$, we give the exact norm of $H_{\alpha,\beta,\gamma}$, extending the results of \cite{Hardy, HLP, Byang1}.
\vskip .2cm
We  recall that the upper-half plane is $\mathbb{R}_+^2:=\{x+iy\in \mathbb{C}:x\in \mathbb{R}\,\,\,\textrm{and}\,\,\,y>0\}$. Given $1\le p,q\le \infty$ and $\nu>-1$, the mixed
norm Lebesgue space $L_\nu^{p,q}(\mathbb{R}_+^2)$ is defined by the
integrability condition \Be
\|f\|_{p,q,\nu}=||f||_{L_\nu^{p,q}}:=\left[\int_0^\infty\left(\int_{\R}|f(x+iy)|^{p}\mathrm{d}x\right)^{\frac{q}{p}}y^\nu \mathrm{d}y\right]^{\frac{1}{q}}<\infty\,\,\,\textrm{if}\,\,\, 1\le p,q< \infty
\Ee 
and 
\Be
\|f\|_{p,\infty}^p=||f||_{L^{p,\infty}}^p:=\sup_{0<y<\infty}\int_{\R}|f(x+iy)|^{p}\mathrm{d}x <\infty\,\,\,\textrm{if}\,\,\, 1\le p<\infty\,\,\,\textrm{and}\,\,\,q=\infty.
\Ee 
For $1\le p,q<\infty$, the mixed norm weighted Bergman space $A_\nu^{p,q}(\mathbb{R}_+^2)$ is then the closed subspace of $L_\nu^{p,q}(\mathbb{R}_+^2)$ consisting of
holomorphic functions on $\mathbb{R}_+^2$.  When $p=q $ we shall simply write $A_\nu^{p,p}(\mathbb{R}_+^2)=A_\nu^p(\mathbb{R}_+^2)$. The unweighted Bergman space $A^p$ corresponds to the case $\nu=0$.

Recall also that the weighted Bergman projection $P_\nu$ is the orthogonal
projection from the Hilbert space $L_{\nu}^2(\mathbb{R}_+^2)$ onto its
closed subspace $A_{\nu}^2(\mathbb{R}_+^2)$ and it is given by the integral
formula \Be P_{\nu}f(z)=c_\nu \int_{\mathbb{R}_+^2}\frac{f(w)}{(z-\bar{w})^{2+\nu}}dV_\nu(w)\Ee where we used the notation $dV_\nu(z)=y^\nu dV(z)=y^\nu dxdy$, $z=x+iy$; $c_\nu=\frac{2^\nu}{\pi}(\nu+1)e^{-i\nu \frac{\pi}{2}}$.

It is well known that for any $f\in A_{\nu}^2(\mathbb{R}_+^2)$, \Be f(z)=c_\nu \int_{\mathbb{R}_+^2}\frac{f(w)}{(z-\bar{w})^{2+\nu}}dV_\nu(w). \Ee

It is easy to see that the Bergman projection is bounded on $L_\nu^{p,q}(\mathbb{R}_+^2)$ whenever $q>1$ (see \cite{BBGNPR}). 

Our second interest in this paper is the upper-diagonal-boundedness of a family of
operators generalizing the Bergman projection. This family is
given by the integral operators $T=T_{\a,\ba,\ga}$ and
$T^{+}=T_{\a,\ba,\ga}^+$ defined for functions in $C_c^{\infty}(\mathbb{R}_+^2)$ by the
formulas

$$Tf(z)=(\Im z)^\alpha \int_{\mathbb{R}_+^2}\frac{f(w)}{(z-\bar{w})^{1+\gamma}}(\Im w)^\beta dV(w),$$
and
$$T^{+}f(z)=(\Im z)^\alpha \int_{\mathbb{R}_+^2}\frac{f(w)}{|z-\bar{w}|^{1+\gamma}}(\Im w)^\beta dV(w).$$
Let us remark that the boundedness of $T^+$ on $L_\nu^{p,q}(\mathbb{R}_+^2)$
implies the boundedness of $T$.

The boundedness of this family of operators on $L_\nu^{p,q}(\mathbb{R}_+^2)$ for $1<p,q<\infty$ is just a particular case of \cite{Sehba}. Here, we consider the problem of the boundedness of the operators  $T^+$ from $L_\nu^{p,q}(\mathbb{R}_+^2)$ to $L_\mu^{p,r}(\mathbb{R}_+^2)$, with $1\le p<\infty$ and $1\le q\le r<\infty$. 
\vskip .2cm
As we will see, the study of the boundedness of the operators $T_{\alpha,\beta,\gamma}^+$ can be related to the boundedness of the operators $H_{\alpha,\beta,\gamma}$, providing another motivation for the study of the general Hilbert operators considered here. The Bergman projection is just a particular case of the operators $T_{\alpha, \beta,\gamma}$ and his boundedness is useful in some other questions as the characterization of the dual spaces of Bergman spaces and their atomic decomposition (see for example \cite{BBGNPR}). 

\section{Statement of the results}
\subsection{Hilbert-type operators}
The following result provide relations between $p,q,a,b,\alpha,\beta,\gamma$ under which the operators $H_{\alpha, \beta \gamma}$ are bounded. 
\begin{theorem}\label{thm:main1}
Suppose $a,b>-1$ and $1\le p\le q< \infty$. Then the
following conditions are equivalent:

\begin{itemize}
\item[(a)]

The operator $H_{\a,\ba,\ga}$ is bounded from
$L_{a}^{p}((0,\infty))$ into $L_{b}^{q}((0,\infty))$

\item[(b)]

The parameters satisfy 
\begin{equation}\label{eq:relationalphabetagamma1}
\gamma =\a+\ba+1-\frac{a+1}{p}+\frac{b+1}{q}
\end{equation} 
and
\begin{equation}\label{eq:condineq11}
-p(\gamma-\beta-1)<a+1<p(\beta+1).
\end{equation}
\end{itemize}
\end{theorem}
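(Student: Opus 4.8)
The plan is to establish the two implications separately, using a scaling (dilation) argument for the necessity of the homogeneity relation \eqref{eq:relationalphabetagamma1} and a Schur-test computation for the sufficiency. First, for the direction (a)$\Rightarrow$(b): I would exploit the near-homogeneity of the kernel $K(x,y)=x^\alpha(x+y)^{-\gamma}y^\beta$. For $t>0$ set $f_t(y)=f(ty)$; a change of variables shows $\|f_t\|_{p,a}=t^{-(a+1)/p}\|f\|_{p,a}$ and, because $K(tx,ty)=t^{\alpha+\beta-\gamma}K(x,y)$, one gets $H_{\a,\ba,\ga}(f_t)(x)=t^{\alpha+\beta-\gamma+1}(H_{\a,\ba,\ga}f)(tx)$ whence $\|H_{\a,\ba,\ga}(f_t)\|_{q,b}=t^{\alpha+\beta-\gamma+1-(b+1)/q}\|H_{\a,\ba,\ga}f\|_{q,b}$. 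Feeding this into the assumed boundedness inequality and letting $t\to0$ and $t\to\infty$ forces the exponents of $t$ to match, which is exactly \eqref{eq:relationalphabetagamma1}. For the inequalities \eqref{eq:condineq11}, I would test $H_{\a,\ba,\ga}$ against functions like $f(y)=y^{-s}\mathbf 1_{(0,1)}(y)$ (and $\mathbf 1_{(1,\infty)}$) and check for which $s$ near the critical exponents $f\in L^p_a$ while $H_{\a,\ba,\ga}f\notin L^q_b$; divergence of the defining integral at $y\to0$ or $y\to\infty$ pins down the endpoints $a+1=p(\beta+1)$ and $a+1=-p(\gamma-\beta-1)$ as genuine obstructions, giving the strict inequalities.

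For the converse (b)$\Rightarrow$(a), the main tool is a Schur test adapted to the off-diagonal situation $p\le q$. When $p=q$ the classical Schur test applies directly: one looks for a positive weight $\varphi(x)=x^{-\delta}$ such that $\int_0^\infty K(x,y)x^a\varphi(y)^{p'}\,dy \lesssim \varphi(x)^{p'}x^{?}$ and the dual estimate, where the integrals are computed via the Beta-function identity $\int_0^\infty \frac{y^{c-1}}{(1+y)^d}\,dy = B(c,d-c)$ valid for $0<c<d$; the constraints $0<c$ and $c<d$ in that identity translate precisely into \eqref{eq:condineq11}, and \eqref{eq:relationalphabetagamma1} makes the resulting powers of $x$ cancel. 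For $p<q$ I would interpolate: factor $H_{\a,\ba,\ga}$ through a fractional-integration-type gain, or more cleanly use the Schur-type test for $L^p\to L^q$ boundedness of positive integral operators (e.g. the version in Hardy–Littlewood–Pólya / the weighted Hardy inequality machinery), choosing power weights and again reducing every auxiliary integral to a Beta integral whose convergence conditions reproduce \eqref{eq:condineq11}. Homogeneity \eqref{eq:relationalphabetagamma1} guarantees the bookkeeping of exponents closes.

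The step I expect to be the main obstacle is the sufficiency direction in the strictly off-diagonal range $p<q$: the plain Schur test is not by itself enough to get $L^p\to L^q$ bounds, so one needs either a two-weight Schur test with a carefully chosen pair of power weights or a reduction (via Minkowski's integral inequality and a splitting of the region of integration into $y<x$ and $y>x$) to one-dimensional weighted Hardy inequalities whose sharp constants are classical. Getting the weights to simultaneously satisfy both halves of the Schur test while respecting the strict inequalities \eqref{eq:condineq11} — i.e. verifying that the relevant Beta integrals converge exactly on the stated open parameter range and nowhere on its closure — is where the real care is needed. Everything else is a matter of organizing change-of-variables and the Beta-function identity.
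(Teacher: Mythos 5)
Your strategy is essentially the one the paper follows: the dilation $f_t(y)=f(ty)$ forces the homogeneity relation, test functions give the inequalities, and sufficiency is a Schur-type computation with power weights in which every auxiliary integral is a Beta integral. You also correctly identify the off-diagonal range $p<q$ as the crux; the ``two-weight Schur test'' you ask for is exactly Okikiolu's generalization of the Schur test (the paper's Lemma 3.3, with an extra splitting parameter $t\in(0,1]$), and the substantive work in the paper's sufficiency lemma is the elementary but fiddly verification that the auxiliary exponents $r,s$ and the parameter $t$ can be chosen compatibly with the strict inequalities in (b). (Minor slip: the homogeneity factor should be $t^{\gamma-\alpha-\beta-1}$, not $t^{\alpha+\beta-\gamma+1}$; this does not affect the conclusion that the total exponent of $t$ must vanish.)

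There are, however, two genuine gaps. First, your necessity argument for the strict inequalities does not close at the endpoints. The functions $y^{-s}\mathbf 1_{(0,1)}(y)$ do rule out $a+1>p(\beta+1)$ (take $\beta+1\le s<(a+1)/p$, so that $f\in L^p_a$ while $H_{\alpha,\beta,\gamma}f\equiv\infty$), but when $a+1=p(\beta+1)$ exactly, every admissible $s<(a+1)/p=\beta+1$ makes $\int_0^1 y^{\beta-s}(x+y)^{-\gamma}\,dy$ finite, so no pure power function witnesses unboundedness and your argument only yields $a+1\le p(\beta+1)$. You would need a logarithmic refinement such as $y^{-(\beta+1)}(\log(2/y))^{-\lambda}\mathbf 1_{(0,1)}(y)$ with $1/p<\lambda\le 1$ (which itself breaks down at $p=1$), whereas the paper sidesteps this by applying the explicitly computed adjoint $H^*_{\alpha,\beta,\gamma}$ to the bump $\chi_{[1,2]}$ and reading the two strict inequalities directly off the convergence conditions of a single Beta integral; the same issue arises at the other endpoint $a+1=-p(\gamma-\beta-1)$. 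Second, the theorem includes $p=1$, which your plan omits: the classical and Okikiolu Schur tests you invoke require $p>1$, and the paper treats sufficiency at $p=1$ with a separate limit-case Schur lemma (Zhao's version, Lemma 3.4) and necessity by testing the operator itself rather than its adjoint, combined with the observation (Lemma 4.3) that under the homogeneity relation the two pairs of inequalities $-q\alpha<b+1<q(\gamma-\alpha)$ and $-p(\gamma-\beta-1)<a+1<p(\beta+1)$ are equivalent.
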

We also have the following second result.
\begin{theorem}\label{thm:main2pinfty}
Suppose $a>-1$ and $1<p< \infty$. Then the
following conditions are equivalent:

\begin{itemize}
\item[(a)]

The operator $H_{\a,\ba,\ga}$ is bounded from
$L_{a}^{p}((0,\infty))$ into $L^\infty((0,\infty))$.

\item[(b)]

The parameters satisfy 
\begin{equation}
\gamma =\a+\ba+1-\frac{a+1}{p}
\end{equation} 
and
\begin{equation}\label{eq:condineq1}
\alpha>0\,\,\,\textrm{and}\,\,\,a+1<p(\beta+1).
\end{equation}
\end{itemize}
\end{theorem}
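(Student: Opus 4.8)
The plan is to treat this as the $q=\infty$ endpoint analogue of Theorem \ref{thm:main1}, and to prove the two implications separately, exploiting the homogeneity of the kernel as the central tool. First I would establish that (b) implies (a). Fixing $f \in L_a^p$ and applying H\"older's inequality in the integral defining $H_{\a,\ba,\ga}f(x)$ with the weight split as $y^\beta = y^{\beta - a/p}\cdot y^{a/p}$, I get
\begin{equation*}
|H_{\a,\ba,\ga}f(x)| \le x^\alpha \|f\|_{p,a} \left( \int_0^\infty \frac{y^{(\beta - a/p)p'}}{(x+y)^{\gamma p'}}\,dy \right)^{1/p'}.
\end{equation*}
The inner integral converges precisely when the exponent conditions in \eqref{eq:condineq1} hold: the condition $a+1 < p(\beta+1)$ guarantees integrability near $y=0$ (it says $(\beta - a/p)p' > -1$), while the relation $\gamma = \a+\ba+1-\frac{a+1}{p}$ together with $\alpha > 0$ forces the exponent $\gamma p' - (\beta - a/p)p'$ to exceed $1$, giving convergence at infinity. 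A substitution $y = xt$ then shows the inner integral equals $x^{(\beta - a/p)p' + 1 - \gamma p'}$ times a finite constant, and the parameter relation makes the total power of $x$ vanish after multiplying by $x^{\alpha p'}$; hence $\|H_{\a,\ba,\ga}f\|_\infty \le C\|f\|_{p,a}$ with $C$ expressible as a Beta-function value.

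Next I would prove that (a) implies (b). The necessity of the scaling relation $\gamma = \a+\ba+1-\frac{a+1}{p}$ follows from a dilation argument: writing $f_\la(y) = f(\la y)$, one computes $H_{\a,\ba,\ga}f_\la(x) = \la^{-(\a+\ba+1-\gamma)} H_{\a,\ba,\ga}f(\la x)$, and since $\|f_\la\|_{p,a} = \la^{-(a+1)/p}\|f\|_{p,a}$ while $\|H_{\a,\ba,\ga}f_\la\|_\infty = \la^{-(\a+\ba+1-\gamma)}\|H_{\a,\ba,\ga}f\|_\infty$ (the sup norm being dilation-invariant in the sense that $\sup_x |g(\la x)| = \sup_x |g(x)|$), boundedness across all $\la$ forces $\a+\ba+1-\gamma = (a+1)/p$. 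For the inequality constraints, necessity of $a+1 < p(\beta+1)$ is obtained by testing on $f(y) = y^{-(a+1)/p}\chi_{(0,1)}(y)$ (or a smooth truncation thereof), which lies in $L_a^p$ exactly at the borderline and makes the defining integral diverge if $\beta$ is too small. Necessity of $\alpha > 0$ is the subtler point: here I would argue that if $\alpha \le 0$, then evaluating $H_{\a,\ba,\ga}f(x)$ as $x \to 0$ for a fixed nonnegative test function supported away from the origin produces a quantity behaving like $x^\alpha$, which is unbounded (if $\alpha<0$) or at least fails the required decay; more carefully, one must check that $x^\alpha \int_0^\infty \frac{f(y)}{(x+y)^\gamma} y^\beta\,dy$ cannot stay bounded on all of $(0,\infty)$ when $\alpha \le 0$ and $f$ is chosen appropriately — the case $\alpha = 0$ requires testing against an $f$ for which the integral $\int_0^\infty \frac{f(y)}{(x+y)^\gamma}y^\beta\,dy$ itself blows up as $x\to 0$, which happens when $f(y) \sim y^{\gamma - \beta - 1}$ near $0$ in a way compatible with $L_a^p$ membership.

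I expect the main obstacle to be precisely this necessity of $\alpha > 0$, since it is the genuinely new phenomenon at the $q = \infty$ endpoint (in Theorem \ref{thm:main1} the analogous role is played by the two-sided bound \eqref{eq:condineq11}, and one recovers the present situation by letting $q\to\infty$, but the strict inequality $\alpha>0$ emerging in the limit needs an independent argument rather than a formal limit of \eqref{eq:condineq11}). The delicate part is constructing a single test function $f \in L_a^p$ that simultaneously respects the borderline integrability near $y=0$ forced by \eqref{eq:condineq1} and witnesses unboundedness of $H_{\a,\ba,\ga}f$ when $\alpha = 0$; logarithmic refinements of the power weight (e.g. $f(y) = y^{-(a+1)/p}(\log(e/y))^{-1}\chi_{(0,1)}(y)$) are the natural candidates, and verifying the growth of the resulting integral as $x \to 0$ is the one genuinely computational step I would need to carry out in full.
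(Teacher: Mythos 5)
Your sufficiency argument coincides with the paper's: both apply H\"older's inequality with the same splitting (your exponent $(\beta-a/p)p'$ equals the paper's $p'(\beta-a)+a$) and evaluate the resulting integral via the Beta-function lemma, so there is nothing to add there. For the necessity your route is genuinely different. The paper passes to the adjoint: boundedness of $H_{\alpha,\beta,\gamma}:L_a^p\to L^\infty$ gives boundedness of $H_{\alpha,\beta,\gamma}^*:L^1\to L_a^{p'}$, with $H_{\alpha,\beta,\gamma}^*g(y)=y^{\beta-a}\int_0^\infty g(x)x^\alpha(x+y)^{-\gamma}\,\mathrm{d}x$; testing with the single function $\chi_{[1,2]}$ forces $y^{\beta-a}(1+y)^{-\gamma}\in L_a^{p'}$, and the two strict integrability requirements (at $0$ and at $\infty$) are exactly $a+1<p(\beta+1)$ and, via the scaling relation, $\alpha>0$ --- strictness included, with no borderline case to treat separately. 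Your direct approach (dilations on $H$ itself plus test functions in $L_a^p$) also works, but it is the harder road for precisely the reason you identify: the bump supported in $[1,2]$ only rules out $\alpha<0$, and both the case $\alpha=0$ and the borderline $a+1=p(\beta+1)$ force you to the logarithmically corrected function $f(y)=y^{-(a+1)/p}(\log(e/y))^{-\delta}\chi_{(0,1)}(y)$ with $\frac1p<\delta\le 1$, which does lie in $L_a^p$ and does make $H_{\alpha,\beta,\gamma}f(x)\to\infty$ as $x\to 0$ when $\alpha=0$ (since then $\beta-\frac{a+1}{p}-\gamma=-1$). Note one slip to correct: $y^{-(a+1)/p}\chi_{(0,1)}$ itself is \emph{not} in $L_a^p$ (its $p$-th power integrates $y^{-1}$ near the origin), so the phrase ``lies in $L_a^p$ exactly at the borderline'' is wrong and the truncation or log-correction is not optional. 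In short, your plan is sound once the flagged computation is carried out, but the paper's duality trick extracts the strict inequality $\alpha>0$ from one characteristic function and makes the delicate endpoint construction unnecessary.
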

In the diagonal case, we have the following endpoint result.
\begin{theorem}\label{thm:main3inftyinfty}

The operator $H_{\a,\ba,\ga}$ is bounded on
$L^\infty((0,\infty))$ if and only if
$\alpha>0$, $\beta>-1$ and $\gamma=\alpha+\beta+1$. Moreover,
$$\|H_{\alpha,\beta,\gamma}\|_{L^\infty \rightarrow L^\infty}=B\left(\beta+1,\alpha\right).$$

\end{theorem}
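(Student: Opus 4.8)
The result is the diagonal endpoint case $p=q=\infty$ and it is natural to split it into a sufficiency part (with the norm computation) and a necessity part. For sufficiency, suppose $\alpha>0$, $\beta>-1$ and $\gamma=\alpha+\beta+1$. Given $f\in L^\infty((0,\infty))$, I would estimate pointwise
\[
|H_{\alpha,\beta,\gamma}f(x)|\le \|f\|_\infty\, x^\alpha\int_0^\infty\frac{y^\beta}{(x+y)^{\alpha+\beta+1}}\,\mathrm dy,
\]
and then evaluate the last integral by the substitution $y=xt$, which turns it into $x^\alpha\cdot x^{\beta+1}\cdot x^{-(\alpha+\beta+1)}\int_0^\infty \frac{t^\beta}{(1+t)^{\alpha+\beta+1}}\,\mathrm dt=\int_0^\infty \frac{t^\beta}{(1+t)^{\alpha+\beta+1}}\,\mathrm dt$. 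The convergence of this integral at $0$ needs $\beta>-1$ and at $\infty$ needs $\alpha>0$, and by the standard Beta-function identity it equals $B(\beta+1,\alpha)$. Hence $\|H_{\alpha,\beta,\gamma}f\|_\infty\le B(\beta+1,\alpha)\|f\|_\infty$, giving boundedness and the upper bound on the norm.

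For the lower bound on the norm, the computation above shows that the pointwise inequality is an equality when $f\equiv 1$ (or, to stay inside the earlier convention that $H_{\alpha,\beta,\gamma}$ is defined on compactly supported functions, one tests on $f=\mathbf 1_{(0,R)}$ and lets $R\to\infty$, using monotone convergence on the positive kernel). Since $\|\mathbf 1_{(0,R)}\|_\infty=1$ while $H_{\alpha,\beta,\gamma}\mathbf 1_{(0,R)}(x)\to B(\beta+1,\alpha)$ for every fixed $x$ as $R\to\infty$, we get $\|H_{\alpha,\beta,\gamma}\|_{L^\infty\to L^\infty}\ge B(\beta+1,\alpha)$, hence equality.

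For necessity, assume $H_{\alpha,\beta,\gamma}$ is bounded on $L^\infty$. Testing on the constant function $1$ (again approximating by $\mathbf 1_{(0,R)}$ if one insists on compact support) forces the integral $x^\alpha\int_0^\infty y^\beta(x+y)^{-\gamma}\,\mathrm dy$ to be finite for some, hence all, $x>0$ and to be bounded in $x$. Finiteness at the endpoint $y\to 0$ requires $\beta>-1$; finiteness at $y\to\infty$ requires $\gamma-\beta>1$; and the scaling substitution $y=xt$ shows this integral equals $x^{\alpha+\beta+1-\gamma}\int_0^\infty t^\beta(1+t)^{-\gamma}\,\mathrm dt$, so boundedness in $x$ over $(0,\infty)$ forces the exponent to vanish, i.e.\ $\gamma=\alpha+\beta+1$; with this value the $y\to\infty$ condition becomes $\alpha>0$. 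This yields exactly the three stated conditions, and closes the equivalence.

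The only mildly delicate point — the ``main obstacle,'' such as it is — is the bookkeeping around the definition of $H_{\alpha,\beta,\gamma}$ on compactly supported functions versus evaluating it on the constant $1$: one must justify passing from $\mathbf 1_{(0,R)}$ to the limit, both to extract the lower norm bound and to read off the necessary conditions. Since the kernel $y^\beta(x+y)^{-\gamma}$ is nonnegative, this is handled cleanly by the monotone convergence theorem, and all remaining steps are the routine Beta-integral evaluation and a scaling argument.
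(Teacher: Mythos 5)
Your proof is correct and follows essentially the same route as the paper: both reduce the problem to the finiteness and value of $\sup_{x>0} x^\alpha\int_0^\infty y^\beta(x+y)^{-\gamma}\,\mathrm{d}y$, computed via the scaling substitution and the Beta integral. The only difference is that the paper invokes its general Lemma \ref{lem:okikiolulimitcaseinfty} (the $L^\infty$ norm of a positive-kernel operator equals the supremum of the kernel integral), whereas you re-derive that identification by hand by testing on $\mathbf{1}_{(0,R)}$ and using monotone convergence, which is a perfectly sound substitute.
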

In the above theorem and all over this section, $B(\cdot, \cdot)$ is the $\beta$-function defined in the next section. Restricting ourself to the case $\gamma=\alpha+\beta+1$ and $p=q<\infty$, we obtain the exact norm of the corresponding operators $H_{\alpha,\beta,\gamma}$.
\begin{corollary}\label{cor:maincor1}
Let $a>-1$ and $1\le p<\infty$. Assume that $-p\alpha<a+1<p(\beta+1)$ and $\gamma=\alpha+\beta+1$. Then $$\|H_{\alpha,\beta,\gamma}\|_{L_a^p\rightarrow L_a^p}=B\left(\beta+1-\frac{a+1}{p},\alpha+\frac{a+1}{p}\right).$$
\end{corollary}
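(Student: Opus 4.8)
The hypotheses $\gamma=\alpha+\beta+1$ and $-p\alpha<a+1<p(\beta+1)$ are precisely the relations \eqref{eq:relationalphabetagamma1} and \eqref{eq:condineq11} of Theorem \ref{thm:main1}(b) specialized to the diagonal case $p=q$, $a=b$, so $H_{\alpha,\beta,\gamma}$ is already known to be bounded on $L_a^p$; only its norm $N$ has to be identified. The structural fact to exploit is that for $\gamma=\alpha+\beta+1$ the kernel $K(x,y)=x^\alpha y^\beta(x+y)^{-(\alpha+\beta+1)}$ is homogeneous of degree $-1$, so that the substitution $y=xt$ gives $H_{\alpha,\beta,\gamma}f(x)=\int_0^\infty K(1,t)f(xt)\,dt$ with $K(1,t)=t^\beta(1+t)^{-(\alpha+\beta+1)}$. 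To obtain $N\le B(\beta+1-\frac{a+1}{p},\alpha+\frac{a+1}{p})$ I would apply Minkowski's integral inequality in $L_a^p$ to this representation, together with the dilation identity $\|f(\cdot\,t)\|_{p,a}=t^{-(a+1)/p}\|f\|_{p,a}$, to get
\[
\|H_{\alpha,\beta,\gamma}f\|_{p,a}\le\Big(\int_0^\infty\frac{t^{\beta-\frac{a+1}{p}}}{(1+t)^{\alpha+\beta+1}}\,dt\Big)\|f\|_{p,a},
\]
and then recognize the integral as the Beta integral $\int_0^\infty t^{s-1}(1+t)^{-(s+u)}\,dt=B(s,u)$ with $s=\beta+1-\frac{a+1}{p}>0$, $u=\alpha+\frac{a+1}{p}>0$ and $s+u=\gamma$; the two positivity conditions are exactly the hypotheses.

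For the matching lower bound I would test against truncated powers, the point being that the formal extremizer $x^{-(a+1)/p}$ just fails to lie in $L_a^p$. Set $f_\delta(x)=x^{-\frac{a+1}{p}-\delta}\mathbf 1_{[1,\infty)}(x)$ for small $\delta>0$, so that $\|f_\delta\|_{p,a}^p=\tfrac{1}{p\delta}$, and compute by the same change of variables $H_{\alpha,\beta,\gamma}f_\delta(x)=x^{-\frac{a+1}{p}-\delta}\int_{1/x}^\infty t^{\beta-\frac{a+1}{p}-\delta}(1+t)^{-(\alpha+\beta+1)}\,dt$ for $x>0$. Bounding the inner integral from below by its value on $[1/R,\infty)$ when $x\ge R\ge1$ yields
\[
\frac{\|H_{\alpha,\beta,\gamma}f_\delta\|_{p,a}}{\|f_\delta\|_{p,a}}\ \ge\ R^{-\delta}\int_{1/R}^\infty\frac{t^{\beta-\frac{a+1}{p}-\delta}}{(1+t)^{\alpha+\beta+1}}\,dt ,
\]
and letting $\delta\to0^+$ for fixed $R$ (dominated convergence, using $t^{-\delta}\le R^{\delta_0}$ on $[1/R,\infty)$ for a fixed small $\delta_0$) and then $R\to\infty$ (monotone convergence) recovers $B(\beta+1-\frac{a+1}{p},\alpha+\frac{a+1}{p})$; hence $N=B(\beta+1-\frac{a+1}{p},\alpha+\frac{a+1}{p})$.

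The upper bound is essentially automatic once homogeneity is used; the step I expect to be the main obstacle is the sharpness, since no genuine extremizer lives in $L_a^p$ and the truncated family must be controlled through a careful double limit with interchange of limit and integral — this is exactly where the strict inequalities $-p\alpha<a+1<p(\beta+1)$ are needed, as they are precisely the conditions making $t^{\beta-\frac{a+1}{p}-\delta}(1+t)^{-(\alpha+\beta+1)}$ uniformly integrable near $0$ and near $\infty$ for small $\delta$. Alternatively, one may conjugate by the isometry $U_af(x)=x^{a/p}f(x)$ of $L_a^p$ onto $L^p((0,\infty))$, which turns $H_{\alpha,\beta,\gamma}$ into an operator whose kernel is still homogeneous of degree $-1$ but on the unweighted space, to which the classical Hardy--Littlewood--P\'olya sharp-norm theorem applies directly; as a consistency check, letting $p\to\infty$ in the answer gives $B(\beta+1,\alpha)$, in agreement with Theorem \ref{thm:main3inftyinfty}.
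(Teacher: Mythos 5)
Your proof is correct, but it follows a genuinely different route from the paper's on both halves. For the upper bound the paper invokes the Schur/Okikiolu test with power weights (it refers back to the sufficiency part of Theorem \ref{thm:main1}), whereas you exploit the degree $-1$ homogeneity of the kernel to write $H_{\alpha,\beta,\gamma}f(x)=\int_0^\infty t^\beta(1+t)^{-\gamma}f(xt)\,\mathrm{d}t$ and apply Minkowski's integral inequality together with the dilation identity $\|f(\cdot\,t)\|_{p,a}=t^{-(a+1)/p}\|f\|_{p,a}$; both yield the same constant, but your version works verbatim for $p=1$, a case the paper has to treat separately via the sharp $L^1$ Schur test (Lemma \ref{lem:okikiolulimitcasesharp}). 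For the lower bound both arguments use the same truncated near-extremizers $x^{-\frac{a+1}{p}-\delta}\chi_{[1,\infty)}$, but the paper runs a duality argument: it pairs $H_{\alpha,\beta,\gamma}f$ against the companion family $x^{-\frac{a+1}{p'}-\delta}\chi_{[1,\infty)}$, isolates the main term $\frac{1}{\delta}B(\cdot,\cdot)$, controls the boundary correction by the explicit tail estimate of Lemma \ref{lem:stepsharp}, and concludes by contradiction; you instead bound $H_{\alpha,\beta,\gamma}f_\delta(x)$ pointwise from below on $[R,\infty)$ and take the double limit $\delta\to 0^+$ then $R\to\infty$. Your route dispenses with the auxiliary tail lemma and with the case split at $p=1$, at the cost of having to justify the interchange of limit and integral, which you do correctly: the strict inequalities $-p\alpha<a+1<p(\beta+1)$ are exactly what make your dominating function integrable near $0$ and near $\infty$. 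Both are complete proofs of the corollary.
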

\subsection{Bergman-type operators}
Here are our results on the boundedness of the operators $T^+$ from $L_{a}^{p,q}(\mathbb{R}_+^2)$ into $L_{b}^{p,r}(\mathbb{R}_+^2)$.
\subsubsection{The case $1<p,q<\infty$}

We have the following result.
\begin{theorem}\label{thm:main2}
Suppose $a,b>-1$, $1< p<\infty$, and $1< q\le r< \infty$. Then the
following conditions are equivalent:

\begin{itemize}
\item[(a)]

The operator $T^+$ is bounded from
$L_{a}^{p,q}(\mathbb{R}_+^2)$ into $L_{b}^{p,r}(\mathbb{R}_+^2)$

\item[(b)]

The parameters satisfy 
\begin{equation}\label{eq:relationalphabetagamma}
\gamma =\a+\ba+1-\frac{a+1}{q}+\frac{b+1}{r}
\end{equation} 
and
\begin{equation}\label{eq:condineq1}
-q(\gamma-\beta-1)<a+1<q(\beta+1).
\end{equation}
\end{itemize}
\end{theorem}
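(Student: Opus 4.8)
The plan is to obtain (b)$\Rightarrow$(a) as a consequence of the one-dimensional Theorem~\ref{thm:main1}, and to obtain (a)$\Rightarrow$(b) by combining a scaling argument with tests of $T^+$ on explicit functions.

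For the sufficiency I would argue as follows. Assuming (b), note first that adding the two inequalities in \eqref{eq:condineq1} gives $\gamma>0$. Fix $f\in C_c^\infty(\mathbb{R}_+^2)$, $f\ge 0$, write $z=x+iy$, $w=u+iv$, and use $|z-\bar w|^2=(x-u)^2+(y+v)^2$ to rewrite, for each fixed $y>0$,
\[
T^+f(x+iy)=y^\alpha\int_0^\infty v^\beta\,\bigl(f(\cdot+iv)\ast\phi_{y+v}\bigr)(x)\,dv,\qquad \phi_t(x):=(x^2+t^2)^{-\frac{1+\gamma}{2}},
\]
the convolution being taken on $\mathbb{R}$, with $\|\phi_t\|_{L^1(\mathbb{R})}=C_\gamma t^{-\gamma}$ and $C_\gamma<\infty$ since $\gamma>0$. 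Minkowski's integral inequality in the $x$-variable followed by Young's convolution inequality then give $\|T^+f(\cdot+iy)\|_{L^p(\mathbb{R})}\le C_\gamma\,H_{\alpha,\beta,\gamma}F(y)$ with $F(v):=\|f(\cdot+iv)\|_{L^p(\mathbb{R})}$; taking the $L_b^r((0,\infty))$-norm in $y$ and using $\|F\|_{L_a^q}=\|f\|_{L_a^{p,q}}$ leaves us with $\|T^+f\|_{L_b^{p,r}}\le C_\gamma\,\|H_{\alpha,\beta,\gamma}\|_{L_a^q\to L_b^r}\,\|f\|_{L_a^{p,q}}$, the norm on the right being finite precisely because \eqref{eq:relationalphabetagamma} and \eqref{eq:condineq1} are exactly condition (b) of Theorem~\ref{thm:main1} with $(p,q)$ replaced by $(q,r)$. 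A density argument finishes this implication.

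For the necessity I would first extract the homogeneity relation \eqref{eq:relationalphabetagamma} by scaling: with $f_\delta(w):=f(\delta w)$ one has $T^+f_\delta(z)=\delta^{\gamma-\alpha-\beta-1}(T^+f)(\delta z)$, while $\|f_\delta\|_{L_a^{p,q}}=\delta^{-1/p-(a+1)/q}\|f\|_{L_a^{p,q}}$ and $\|(T^+f)(\delta\cdot)\|_{L_b^{p,r}}=\delta^{-1/p-(b+1)/r}\|T^+f\|_{L_b^{p,r}}$; feeding this into $\|T^+f_\delta\|_{L_b^{p,r}}\le\|T^+\|\,\|f_\delta\|_{L_a^{p,q}}$ for a fixed $f\ge0$, $f\not\equiv0$, and letting $\delta\to0$ and $\delta\to\infty$ forces the powers of $\delta$ to match, which is \eqref{eq:relationalphabetagamma}. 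To see that $a+1<q(\beta+1)$ is necessary I would test on $f(u+iv)=\mathbf 1_{(-1,1)}(u)\,v^{-\beta-1}(\log\tfrac{e}{v})^{-1}\mathbf 1_{(0,1)}(v)$: if the inequality failed this $f$ would still belong to $L_a^{p,q}(\mathbb{R}_+^2)$ --- here the hypothesis $q>1$ is needed in the endpoint case $a+1=q(\beta+1)$ --- whereas, since $\int_{-1}^1\bigl((x-u)^2+(y+v)^2\bigr)^{-(1+\gamma)/2}\,du\ge c>0$ on $(-1,1)\times(0,1)$ while $\int_0^1 v^{-1}(\log\tfrac{e}{v})^{-1}\,dv=\infty$, one would get $T^+f\equiv+\infty$ there, which (after approximating $f$ monotonically from below by $C_c^\infty$ functions and using positivity of the kernel) contradicts boundedness; in particular this yields $\beta>-1$. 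Finally, using \eqref{eq:relationalphabetagamma} the remaining inequality $-q(\gamma-\beta-1)<a+1$ is equivalent to $\alpha>-(b+1)/r$, and for that I would test on the Carleson box $f=\mathbf 1_{(-1,1)\times(0,1)}\in L_a^{p,q}$: since $T^+f(x+iy)\ge c'y^\alpha$ on $(-1,1)\times(0,1)$ with $c'>0$, one gets $\|T^+f\|_{L_b^{p,r}}^r\gtrsim\int_0^1 y^{\alpha r+b}\,dy$, which is finite only when $\alpha r+b>-1$.

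I expect the conceptual heart to be the reduction in the sufficiency part: it is where the action of $T^+$ is matched to that of the Hilbert-type operator $H_{\alpha,\beta,\gamma}$, and once it is set up Theorem~\ref{thm:main1} does the rest. The remaining difficulties are technical: justifying the monotone approximation that allows testing on non-smooth functions touching the boundary, keeping track of exponents in the scaling computation, and verifying the uniform positivity of $\int_{-1}^1\bigl((x-u)^2+(y+v)^2\bigr)^{-(1+\gamma)/2}\,du$ on the box in question (immediate for $\gamma>-1$, and still true for $\gamma\le-1$ on bounding the integrand below by $|x-u|^{|1+\gamma|}$).
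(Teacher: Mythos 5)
Your argument is correct and follows essentially the same route as the paper: the sufficiency is precisely the paper's Lemma \ref{b1} (Minkowski's integral inequality plus Young's convolution inequality reduce $T^+$ to $H_{\alpha,\beta,\gamma}$ acting on $v\mapsto\|f(\cdot+iv)\|_{L^p(\mathbb{R})}$, after which Theorem \ref{thm:main1} with exponents $(q,r)$ finishes), and the necessity of the homogeneity relation and of $-q(\gamma-\beta-1)<a+1$ (equivalently $r\alpha+b+1>0$) is obtained, as in the paper's Lemma \ref{b2}, by the scaling $f\mapsto f(R\cdot)$ and by testing on the indicator of a box touching the boundary. The one genuine divergence is the inequality $a+1<q(\beta+1)$: the paper proves it by passing to the adjoint $(T^+)^*$, computing its kernel explicitly, and testing the adjoint on the same indicator function, whereas you test $T^+$ itself on $\mathbf 1_{(-1,1)}(u)\,v^{-\beta-1}(\log\tfrac{e}{v})^{-1}\mathbf 1_{(0,1)}(v)$, which lies in $L_a^{p,q}$ exactly when the inequality fails (the logarithmic factor together with $q>1$ handles the endpoint $a+1=q(\beta+1)$) while $T^+$ sends it to $+\infty$ on a set of positive measure. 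Both arguments are valid; yours stays entirely on the direct side and avoids computing the adjoint, at the price of the monotone-approximation remark needed to apply the operator bound to a non-smooth nonnegative test function --- a measure-theoretic point the paper itself glosses over when it plugs indicator functions into $T^+$.
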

We also obtain the following.
\begin{theorem}\label{thm:main3}
Suppose $a>-1$, $1< p<\infty$ and $1<q<\infty$. Then the
following conditions are equivalent:

\begin{itemize}
\item[(a)]

The operator $T^+$ is bounded from
$L_{a}^{p,q}(\mathbb{R}_+^2)$ into $L^{p,\infty}(\mathbb{R}_+^2)$

\item[(b)]

The parameters satisfy 
\begin{equation}\label{eq:main31}
\gamma =\a+\ba+1-\frac{a+1}{q}
\end{equation} 
and
\begin{equation}\label{eq:main32}
\alpha>0\,\,\,\textrm{and}\,\,\, a+1<q(\beta+1).
\end{equation}
\end{itemize}
\end{theorem}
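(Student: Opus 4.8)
The plan is to reduce Theorem \ref{thm:main3} to the corresponding one-dimensional statement, namely Theorem \ref{thm:main2pinfty}, by integrating out the horizontal variable. Write $z=x+iy$ and $w=u+iv$. Since $|z-\bar w|^2 = (x-u)^2 + (y+v)^2 \ge (y+v)^2$, and more usefully $|z-\bar w| \asymp (y+v) + |x-u|$, the inner integral defining $T^+f(z)$ factors, after applying Minkowski's integral inequality in the $x$-variable, through the one-dimensional kernel $(y+v)^{-\gamma}$ acting on the function $v \mapsto \|f(\cdot+iv)\|_{L^p(\R)}$. Concretely, set $g(v) := \|f(\cdot+iv)\|_{L^p_x}$; then one shows
\begin{equation*}
\|T^+f(\cdot+iy)\|_{L^p_x} \asymp y^\alpha \int_0^\infty \frac{g(v)}{(y+v)^\gamma} v^\beta\, dv = H_{\alpha,\beta,\gamma}g(y),
\end{equation*}
up to a harmless multiplicative constant depending only on $\alpha,\beta,\gamma$. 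The estimate from above uses Minkowski's inequality together with the bound $\int_\R |x-u|^{-\ast}\cdots\,dx$ in the denominator and a change of variables; the estimate from below follows by testing against functions of the form $f(w) = h(u/v)\phi(v)$ localized near $x-u \approx 0$, or more simply by restricting the region of integration.

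Once this two-sided bound is in place, the claim becomes almost immediate. The condition ``$T^+$ bounded from $L^{p,q}_a(\R_+^2)$ into $L^{p,\infty}(\R_+^2)$'' translates, via the definitions of the mixed-norm spaces, exactly into ``$H_{\alpha,\beta,\gamma}$ bounded from $L^q_a((0,\infty))$ into $L^\infty((0,\infty))$'': the outer norm in $y$ on the target side is the sup norm, while on the source side the outer integral is $\int_0^\infty g(v)^q v^a\,dv$, i.e.\ the $L^q_a$-norm of $g$. Applying Theorem \ref{thm:main2pinfty} with $p$ there replaced by $q$ then yields precisely conditions \eqref{eq:main31} and \eqref{eq:main32}: the homogeneity relation $\gamma = \alpha+\beta+1-\frac{a+1}{q}$ and the pair $\alpha>0$, $a+1<q(\beta+1)$. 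For the forward direction one must also check that boundedness of $T^+$ forces $g \in L^q_a$ to be the relevant class — this is standard since every nonnegative $g\in L^q_a$ arises (up to constants) as $\|f(\cdot+iv)\|_{L^p_x}$ for some $f\in L^{p,q}_a$, e.g.\ $f(u+iv) = g(v)\chi_{[0,1]}(u)$.

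The main obstacle is proving the two-sided equivalence $\|T^+f(\cdot+iy)\|_{L^p_x} \asymp H_{\alpha,\beta,\gamma}\big(\|f(\cdot+i\cdot)\|_{L^p_x}\big)(y)$ with constants independent of $f$, since the upper bound is not a literal identity: one is integrating a genuinely two-dimensional kernel, and after Minkowski's inequality one is left with $\int_0^\infty \big(\int_\R \frac{du'}{((y+v)+|u'|)^{1+\gamma}}\big) \cdots$ — wait, the kernel exponent is $1+\gamma$ but there is one extra horizontal integration, so the relevant one-variable kernel is $\int_\R ((y+v)+|t|)^{-(1+\gamma)}\,dt \asymp (y+v)^{-\gamma}$, which is exactly what produces the exponent $\gamma$ in $H_{\alpha,\beta,\gamma}$; this computation requires $\gamma>0$, which is guaranteed in the relevant parameter range by \eqref{eq:main31}–\eqref{eq:main32} (indeed $\gamma = \alpha+\beta+1-\frac{a+1}{q} > \alpha > 0$ using $a+1 < q(\beta+1)$). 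The lower bound is the more delicate half and is handled by a careful choice of test functions concentrated in $x$; alternatively, since we only need the equivalence of the \emph{boundedness} properties and not of the operator norms, it suffices to prove the lower bound on $\|T^+f(\cdot+iy)\|_{L^p_x}$ only for nonnegative $f$ depending on $v$ alone times $\chi_{[0,1]}(u)$, which sidesteps most of the work.
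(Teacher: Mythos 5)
The sufficiency half of your argument is sound and is essentially the paper's own route: Minkowski's integral inequality plus Young's convolution inequality give the one-sided bound $\|T^+f(\cdot+iy)\|_{L^p_x}\le C_\gamma H_{\alpha,\beta,\gamma}\bigl(\|f_v\|_{L^p}\bigr)(y)$ (this is Lemma \ref{b1}), and combining with Theorem \ref{thm:main2pinfty} applied with exponent $q$ yields (b) $\Rightarrow$ (a).

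The necessity half has a genuine gap: the two-sided equivalence $\|T^+f(\cdot+iy)\|_{L^p_x}\asymp H_{\alpha,\beta,\gamma}\bigl(\|f_v\|_{L^p}\bigr)(y)$ with a constant independent of $f$ is false. If you concentrate $f$ horizontally, e.g.\ $f(u+iv)=\varepsilon^{-1/p}\chi_{[0,\varepsilon]}(u)g(v)$, then $\|f_v\|_{L^p}=g(v)$ is unchanged while $\|T^+f(\cdot+iy)\|_{L^p_x}\to 0$ as $\varepsilon\to 0$, so no uniform lower bound can hold. Your proposed shortcut $f(u+iv)=g(v)\chi_{[0,1]}(u)$ does not repair this: for $x\in[0,1]$ one only gets $\int_0^1|{(x-u)+i(y+v)}|^{-(1+\gamma)}\,\mathrm{d}u\gtrsim (y+v)^{-\gamma}$ when $y+v\lesssim 1$ (otherwise the integral is of size $(y+v)^{-1-\gamma}$, which is smaller by a full factor of $y+v$), so you only dominate a truncated Hilbert operator, not $H_{\alpha,\beta,\gamma}g(y)$; taking $g=\chi_{[N,N+1]}$ shows the loss is real. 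The argument can be salvaged by using the dilated family $f_R(u+iv)=R^{-1/p}g(v)\chi_{[0,R]}(u)$, which has $\|f_R\|_{L^{p,q}_a}=\|g\|_{q,a}$ and yields $\sup_y y^\alpha\int_0^{cR}\frac{g(v)v^\beta}{(y+v)^\gamma}\,\mathrm{d}v\lesssim\|g\|_{q,a}$, whence the desired bound by letting $R\to\infty$ (monotone convergence); but as written your proof does not establish (a) $\Rightarrow$ (b). The paper avoids this issue entirely in Lemma \ref{b3}: it gets the homogeneity relation \eqref{eq:main31} from the dilation $f_R(z)=f(Rz)$, gets $\alpha>0$ from the localized test function $\chi_{[-1/4,1/4]}(x)\chi_{[1,2]}(y)$ together with $\sup_{0<y<1}y^\alpha<\infty$, and gets $a+1<q(\beta+1)$ by passing to the adjoint and testing again, rather than trying to transfer the full $L^\infty$ bound of $H_{\alpha,\beta,\gamma}$ back from $T^+$.
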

\subsubsection{The case $1<p<\infty$ and $1=q\le r\le \infty$}

We have the following result.
\begin{theorem}\label{thm:main4}
Let $1< p,r<\infty$ and let $b>-1$. Then the
following conditions are equivalent:

\begin{itemize}
\item[(a)]

The operator $T^+$ is bounded from
$L^{p,1}(\mathbb{R}_+^2)$ into $L_b^{p,r}(\mathbb{R}_+^2)$.

\item[(b)]

The parameters satisfy 
\begin{equation}\label{eq:main41}
\gamma =\a+\ba+\frac{b+1}{r}
\end{equation} 
and
\begin{equation}\label{eq:main42}
\gamma>\beta>0.
\end{equation}
\end{itemize}
\end{theorem}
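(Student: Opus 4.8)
The plan is to prove the two implications by different routes: $(b)\Rightarrow(a)$ by reducing the two-dimensional estimate, after integrating in the horizontal variable, to the one-dimensional Hilbert-type estimate of Theorem \ref{thm:main1}; and $(a)\Rightarrow(b)$ by passing to the Banach-space adjoint of $T^+$ and invoking Theorem \ref{thm:main3}.

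For $(b)\Rightarrow(a)$, write $z=x+iy$ and $w=u+it$, so that $|z-\bar w|^2=(x-u)^2+(y+t)^2$. With $F:=|f|$ and the even kernel $K_c(s):=(s^2+c^2)^{-\frac{1+\gamma}{2}}$ on $\R$, one dominates
$$|T^+f(x+iy)|\le (\Im z)^\alpha\int_0^\infty t^\beta\,\bigl(K_{y+t}\ast F(\cdot,t)\bigr)(x)\,dt ,$$
where $\ast$ denotes convolution in the horizontal variable. Since $\gamma>\beta>0$ by \eqref{eq:main42}, we have $K_c\in L^1(\R)$ with $\|K_c\|_{L^1(\R)}=c^{-\gamma}B\!\left(\tfrac{1}{2},\tfrac{\gamma}{2}\right)$; applying Young's convolution inequality and then Minkowski's integral inequality in the $x$-variable gives
$$\|T^+f(\cdot+iy)\|_{L^p(\R)}\le B\!\left(\tfrac{1}{2},\tfrac{\gamma}{2}\right)\,y^\alpha\int_0^\infty\frac{g(t)\,t^\beta}{(y+t)^\gamma}\,dt=B\!\left(\tfrac{1}{2},\tfrac{\gamma}{2}\right)\,H_{\alpha,\beta,\gamma}g(y),$$
where $g(t):=\|f(\cdot+it)\|_{L^p(\R)}$. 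Taking $L_b^r((0,\infty))$-norms in $y$, and noting that $\|g\|_{L^1((0,\infty))}=\|f\|_{L^{p,1}}$, the boundedness of $T^+$ follows from that of $H_{\alpha,\beta,\gamma}\colon L^1((0,\infty))\to L_b^r((0,\infty))$. By Theorem \ref{thm:main1} applied with $p=1$, $q=r$, $a=0$, this last boundedness is equivalent to \eqref{eq:relationalphabetagamma1}--\eqref{eq:condineq11}; for these parameters \eqref{eq:relationalphabetagamma1} reads $\gamma=\alpha+\beta+\frac{b+1}{r}$, which is \eqref{eq:main41}, and \eqref{eq:condineq11} reads $-(\gamma-\beta-1)<1<\beta+1$, i.e. $\gamma>\beta>0$, which is \eqref{eq:main42}.

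For $(a)\Rightarrow(b)$, pair against $dV$ on the source side and against $dV_b$ on the target side. A Fubini computation on test functions shows that the adjoint of $T^+=T_{\alpha,\beta,\gamma}^+$ with respect to these pairings is the operator of the same type $T_{\beta,\alpha+b,\gamma}^+$. Using the duality identifications $\bigl(L^{p,1}(\mathbb{R}_+^2)\bigr)^*=L^{p',\infty}(\mathbb{R}_+^2)$ and $\bigl(L_b^{p,r}(\mathbb{R}_+^2)\bigr)^*=L_b^{p',r'}(\mathbb{R}_+^2)$, valid for $1<p,r<\infty$, the boundedness of $T^+$ from $L^{p,1}(\mathbb{R}_+^2)$ into $L_b^{p,r}(\mathbb{R}_+^2)$ implies that of $T_{\beta,\alpha+b,\gamma}^+$ from $L_b^{p',r'}(\mathbb{R}_+^2)$ into $L^{p',\infty}(\mathbb{R}_+^2)$. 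Theorem \ref{thm:main3}, applied to the parameter triple $(\beta,\alpha+b,\gamma)$ with $p',r',b$ in the roles of $p,q,a$, then forces
$$\gamma=\beta+(\alpha+b)+1-\frac{b+1}{r'},\qquad \beta>0,\qquad b+1<r'(\alpha+b+1).$$
Since $\frac{b+1}{r'}=(b+1)-\frac{b+1}{r}$, the first identity simplifies to \eqref{eq:main41}; the inequality $b+1<r'(\alpha+b+1)$ is equivalent to $\alpha>-\frac{b+1}{r}$, which combined with \eqref{eq:main41} yields $\gamma>\beta$, so that together with $\beta>0$ we recover \eqref{eq:main42}. (Alternatively, \eqref{eq:main41} follows at once from the exact dilation covariance $T^+\bigl(f(\delta^{-1}\cdot)\bigr)=\delta^{\alpha+\beta+1-\gamma}\,(T^+f)(\delta^{-1}\cdot)$, a convenient independent check.)

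The delicate points are all in the necessity direction: one should verify carefully that the formal Fubini adjoint coincides with the genuine Banach-space adjoint of the extended bounded operator, and that the duality identifications used are valid on the upper half-plane, in particular the endpoint identity $\bigl(L^{p,1}\bigr)^*=L^{p',\infty}$, which rests on the Radon--Nikodym property of $L^{p'}$ for $1<p'<\infty$. On the sufficiency side the only point to note is that Young's inequality requires $\gamma>0$; this is built into hypothesis \eqref{eq:main42}, since there $\gamma>\beta>0$.
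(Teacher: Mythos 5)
Your proof is correct and follows essentially the same route as the paper: sufficiency by dominating $\|T^+f(\cdot+iy)\|_{L^p(\R)}$ by $B(\tfrac12,\tfrac{\gamma}{2})H_{\alpha,\beta,\gamma}(\|f(\cdot+it)\|_{L^p})(y)$ (the paper's Lemma \ref{b1}) and then invoking Theorem \ref{thm:main1} with $p=1$, $q=r$, $a=0$; necessity by identifying the adjoint as $T^+_{\beta,\alpha+b,\gamma}$ and reducing to the $L^{p',\infty}$-target case (the paper's Lemma \ref{b3}, i.e.\ Theorem \ref{thm:main3}), exactly as the authors do when they call Theorem \ref{thm:main4} the dual version of Theorem \ref{thm:main3}. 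The parameter bookkeeping in both directions checks out.
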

Note that Theorem \ref{thm:main4} is the dual version of Theorem \ref{thm:main3}. The limit case is the following.
\begin{theorem}\label{thm:main5}
Let $1< p<\infty$. Then the
following conditions are equivalent:

\begin{itemize}
\item[(a)]

The operator $T^+$ is bounded from
$L^{p,1}(\mathbb{R}_+^2)$ into $L^{p,\infty}(\mathbb{R}_+^2)$.

\item[(b)]

The parameters satisfy 
\begin{equation}\label{eq:main51}
\gamma =\a+\ba
\end{equation} 
and
\begin{equation}\label{eq:main52}
\alpha,\beta>0.
\end{equation}
\end{itemize}
\end{theorem}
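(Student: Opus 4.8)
The plan is to establish the equivalence in two directions, exploiting the duality already noted after Theorem \ref{thm:main4} together with the reductions used in the previous proofs. For the direction (b) $\Rightarrow$ (a), I would argue directly. Assume $\gamma=\alpha+\beta$ and $\alpha,\beta>0$. Given $f\in L^{p,1}(\mathbb{R}_+^2)$, I would first bound $T^+f(z)$ pointwise in the horizontal variable using Minkowski's integral inequality: writing $z=x+is$, $w=u+it$, one has $|z-\bar w|\ge s+t$ and, for fixed $s,t$, $\int_{\R}|f(u+it)|\,(s+t)^{-(1+\gamma)}\,du$ controls the $x$-integral. More precisely, $\left(\int_\R |T^+f(x+is)|^p dx\right)^{1/p}$ is dominated, via Minkowski, by $s^\alpha\int_0^\infty \left(\int_\R \frac{|f(u+it)|}{|(x-u)+i(s+t)|^{1+\gamma}}\,dx\right)$-type estimates; since $1+\gamma>1$ the inner convolution kernel $(|v|^2+(s+t)^2)^{-(1+\gamma)/2}$ lies in $L^1(\R,dv)$ with mass a constant times $(s+t)^{-\gamma}$. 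This reduces matters to showing that $g(s):=\sup_{0<s}s^\alpha\int_0^\infty \frac{\phi(t)}{(s+t)^\gamma}\,dt$ is finite whenever $\phi\in L^1((0,\infty))$, where $\phi(t)=\left(\int_\R|f(u+it)|^p du\right)^{1/p}$. But that supremum is exactly the statement that $H_{\alpha,\beta,\gamma}$ (with the relevant one-dimensional parameters) maps $L^1$ into $L^\infty$; and here the substitution $t=s\tau$ gives $s^\alpha\int_0^\infty (s+t)^{-\gamma}\phi(t)\,dt\le s^{\alpha-\gamma+1}\|\phi\|_{L^1}\cdot\sup$, which is bounded precisely when $\alpha-\gamma+1=1-\beta$ combines correctly — i.e. when $\gamma=\alpha+\beta$ and the kernel $t\mapsto (1+t)^{-\gamma}$ stays bounded near $0$, which needs $\alpha>0$ to kill the $s^\alpha$ factor near $s=0$ and $\gamma>\beta$ (equivalently $\alpha>0$) for integrability at $\infty$; the condition $\beta>0$ enters to control the behaviour of the $t$-integral near $t=0$. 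I would phrase this cleanly by invoking the one-variable machinery behind Theorem \ref{thm:main3inftyinfty}/Theorem \ref{thm:main2pinfty}, noting that the relation $\gamma=\alpha+\beta$ is the $a=0$, $q=1$ specialization of \eqref{eq:main31} and the conditions $\alpha,\beta>0$ are the corresponding specialization of \eqref{eq:main32}.

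For the direction (a) $\Rightarrow$ (b), I would test $T^+$ against explicit near-extremal functions. The standard choice is $f_\veps(w)=(\Im w)^{\delta}\chi_{\{1<\Im w<2\}}(w)\,\chi_{\{|\Re w|<R\}}(w)$ with parameters $\delta$ and $R$ tuned; feeding such a function through $T^+$ and computing both $\|f_\veps\|_{p,1,0}$ and $\|T^+f_\veps\|_{p,\infty}$, then letting the support parameters degenerate, forces the homogeneity identity $\gamma=\alpha+\beta$ (this is a scaling/dimension count: dilating $w\mapsto \lambda w$ shows $T^+$ can only be bounded between these particular spaces if the powers match, since $L^{p,1}$ and $L^{p,\infty}$ scale differently in the vertical variable). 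Once \eqref{eq:main51} is known, a second family of test functions concentrated near the boundary $\Im w\to 0$ shows $\beta>0$ is necessary (otherwise the $t$-integral defining $T^+f$ diverges for reasonable $f$), and a family with mass escaping to $\Im w\to\infty$, evaluated at a fixed point $z$, shows $\alpha>0$ is necessary (for $\alpha\le 0$ the factor $s^\alpha$ fails to produce an $L^\infty$ bound, or the tail integral diverges). Alternatively — and this is probably the slicker route — I would simply dualize: the adjoint of $T^+_{\alpha,\beta,\gamma}\colon L^{p,1}\to L^{p,\infty}$ is (up to conjugation of parameters) $T^+_{\beta,\alpha,\gamma}\colon L^{p',1}\to L^{p',\infty}$ because the kernel $|z-\bar w|^{-(1+\gamma)}$ is symmetric in $z,w$; but $L^{p,\infty}$ is not reflexive, so I must be careful. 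The honest statement is that Theorem \ref{thm:main5} sits at the endpoint $r=\infty$ of Theorem \ref{thm:main4} and at the endpoint $q=1$ of Theorem \ref{thm:main3}; setting $b=0$, $r=\infty$ in \eqref{eq:main41}--\eqref{eq:main42} gives $\gamma=\alpha+\beta$ and $\gamma>\beta>0$, i.e. $\alpha,\beta>0$, which matches, so the cleanest exposition is to run the same test-function computations as in the proof of Theorem \ref{thm:main4} but with the $L^{p,r}$-norm on the target replaced by the $L^{p,\infty}$-norm, tracking where the strict inequalities come from.

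I expect the main obstacle to be the endpoint nature of both spaces simultaneously: with $q=1$ on the source and $q=\infty$ on the target there is no room for the Schur-test / Minkowski interpolation argument that handles the interior cases, so the sufficiency direction must be done by a bare-hands pointwise estimate, and one has to verify that the constant $B(\beta,\alpha)$-type integral that appears (from $\int_0^\infty (1+t)^{-\gamma}t^{\beta-1}\,dt$, finite exactly when $\beta>0$ and $\gamma-\beta=\alpha>0$) is genuinely finite under \eqref{eq:main52} and genuinely infinite when either inequality fails. Concretely: $\sup_{s>0}s^\alpha\int_0^\infty\frac{\phi(t)\,dt}{(s+t)^\gamma}$ is finite for all $\phi\in L^1$ iff $\sup_{s>0}s^\alpha(s+t)^{-\gamma}<\infty$ uniformly in $t>0$ after the right normalization, which boils down to $\alpha>0$ (behaviour as $s\to 0^+$, $t$ fixed) and $\gamma>\alpha$, i.e. $\beta>0$ (behaviour as $t\to 0^+$ after integrating); the homogeneity $\gamma=\alpha+\beta$ is what makes the two regimes compatible. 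Assembling these one-variable facts with the Minkowski reduction in the horizontal variable completes the argument; the necessity of each condition is then read off by exhibiting a function that saturates or violates the corresponding regime, exactly as in the preceding theorems.
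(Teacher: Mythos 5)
Your overall strategy is the same as the paper's: for sufficiency, a Minkowski--Young reduction in the horizontal variable to the one-dimensional operator $H_{\alpha,\beta,\gamma}$ acting on $\phi(t)=\|f_t\|_{L^p}$ (this is exactly Lemma \ref{b1}), and for necessity, the dilation $f\mapsto f(R\cdot)$ to force the homogeneity relation plus indicator test functions applied to $T^+$ and to its adjoint $T^+_{\beta,\alpha,\gamma}$ to get $\alpha>0$ and $\beta>0$ (this is exactly Lemma \ref{b4}, which defers to Lemmas \ref{b2} and \ref{b3}). So the architecture is right and the necessity half, though sketchy, matches the paper's own (equally terse) treatment.

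There is, however, a concrete error in your one-dimensional endpoint analysis for sufficiency. The reduction produces
$\left\|(T^+f)_s\right\|_{L^p(dx)}\lesssim s^\alpha\int_0^\infty\frac{\phi(t)}{(s+t)^\gamma}\,t^\beta\,dt$,
and you drop the weight $t^\beta$, reducing to $\sup_{s>0}s^\alpha\int_0^\infty\phi(t)(s+t)^{-\gamma}\,dt$. The criterion you then state, namely that $\sup_{s,t}s^\alpha(s+t)^{-\gamma}<\infty$, is in fact \emph{false} under the hypotheses: for fixed $s$ the supremum over $t$ of $s^\alpha(s+t)^{-\gamma}$ is $s^{\alpha-\gamma}=s^{-\beta}$, which blows up as $s\to0^+$ when $\beta>0$. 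The correct criterion for the $L^1\to L^\infty$ endpoint is essential boundedness of the \emph{full} kernel $K(s,t)=s^\alpha t^\beta(s+t)^{-\gamma}$, and under $\gamma=\alpha+\beta$ one has $K(s,t)=\left(\frac{s}{s+t}\right)^{\alpha}\left(\frac{t}{s+t}\right)^{\beta}\le 1$ precisely when $\alpha,\beta\ge 0$; no integration of the kernel is involved. Relatedly, the Beta-integral $\int_0^\infty(1+t)^{-\gamma}t^{\beta-1}\,dt$ you invoke is the criterion for the $L^1\to L^1$ and $L^\infty\to L^\infty$ endpoints (Lemmas \ref{lem:okikiolulimitcasesharp} and \ref{lem:okikiolulimitcaseinfty}), not for $L^1\to L^\infty$; citing Theorem \ref{thm:main2pinfty} or \ref{thm:main3inftyinfty} here is the wrong one-variable input, since what you need is the (trivial) bounded-kernel case $H_{\alpha,\beta,\gamma}\colon L^1\to L^\infty$. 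Once the $t^\beta$ is restored and the sup-of-kernel criterion is used, the sufficiency argument closes exactly as in the paper. (Incidentally, this analysis only yields $\alpha,\beta\ge0$ as the natural threshold for sufficiency, and the indicator test functions only force $\alpha,\beta\ge0$ as well; the strict inequalities in \eqref{eq:main52} are inherited from the paper's statement rather than from either argument, so be aware that the boundary cases are not actually settled by this method.)
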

We also obtain the following.
\begin{theorem}\label{thm:main6}
Let $1< p<\infty$. Then the
following conditions are equivalent:

\begin{itemize}
\item[(a)]

The operator $T^+$ is bounded on
$L^{p,1}(\mathbb{R}_+^2)$.

\item[(b)]

The parameters satisfy 
\begin{equation}\label{eq:main61}
\gamma =\a+\ba+1
\end{equation} 
and
\begin{equation}\label{eq:main62}
\alpha>-1\,\,\,\textrm{and}\,\,\,\beta>0.
\end{equation}
\end{itemize}
\end{theorem}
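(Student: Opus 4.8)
The strategy is to reduce the boundedness of $T^{+}$ on $L^{p,1}(\mathbb{R}_+^2)$ to the boundedness of the one‑dimensional operator $H_{\alpha,\beta,\gamma}$ on $L^{1}((0,\infty))$ and then quote Theorem~\ref{thm:main1}. Given $f\in C_c^{\infty}(\mathbb{R}_+^2)$, set $g(y):=\left(\int_{\mathbb{R}}|f(x+iy)|^{p}\,dx\right)^{1/p}$, so that $g\ge 0$ and $\|f\|_{p,1}=\|g\|_{L^{1}((0,\infty))}$. For fixed heights $t=\Im z$ and $y=\Im w$, the inner integral in $T^{+}f$ is a convolution in the real variable against $u\mapsto (u^{2}+(t+y)^{2})^{-(1+\gamma)/2}$, whose $L^{1}(\mathbb{R})$‑norm equals $C_{\gamma}(t+y)^{-\gamma}$ with $C_{\gamma}=\int_{\mathbb{R}}(u^{2}+1)^{-(1+\gamma)/2}\,du<\infty$ precisely when $\gamma>0$. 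Minkowski's integral inequality in $y$ followed by Young's convolution inequality in $x$ then gives
\[
\left\|(T^{+}f)(\cdot+it)\right\|_{L^{p}(\mathbb{R})}\le C_{\gamma}\,t^{\alpha}\int_{0}^{\infty}\frac{g(y)\,y^{\beta}}{(t+y)^{\gamma}}\,dy=C_{\gamma}\,(H_{\alpha,\beta,\gamma}g)(t),
\]
and integrating in $t$ yields $\|T^{+}f\|_{p,1}\le C_{\gamma}\|H_{\alpha,\beta,\gamma}g\|_{L^{1}((0,\infty))}$.

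To prove (b)$\Rightarrow$(a), note that the conditions $\gamma=\alpha+\beta+1$, $\alpha>-1$, $\beta>0$ are exactly condition (b) of Theorem~\ref{thm:main1} with $p=q=1$ and $a=b=0$: indeed \eqref{eq:relationalphabetagamma1} becomes $\gamma=\alpha+\beta+1$ and \eqref{eq:condineq11} becomes $-\alpha<1<\beta+1$; moreover they force $\gamma>0$. Hence $H_{\alpha,\beta,\gamma}$ is bounded on $L^{1}((0,\infty))$, and combining this with the displayed estimate and $\|g\|_{L^1}=\|f\|_{p,1}$ shows that $T^{+}$ is bounded on $L^{p,1}(\mathbb{R}_+^2)$ (on $C_c^{\infty}$, hence everywhere by density).

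For (a)$\Rightarrow$(b) I would use scaling and explicit test functions, since the Young estimate has no useful reverse. The dilations $f\mapsto f(\lambda\,\cdot)$ satisfy $\|f(\lambda\,\cdot)\|_{p,1}=\lambda^{-1-1/p}\|f\|_{p,1}$ and $T^{+}(f(\lambda\,\cdot))(z)=\lambda^{\gamma-\alpha-\beta-1}(T^{+}f)(\lambda z)$, so boundedness forces the exponent $\gamma-\alpha-\beta-1$ to vanish, which is \eqref{eq:main61}. Next, applying $T^{+}$ to a fixed compactly supported smooth function that is $\ge 1$ on $[-1,1]\times[1,2]$ gives $\|(T^{+}f)(\cdot+it)\|_{L^{p}}\gtrsim t^{\alpha}$ as $t\to 0^{+}$, so finiteness of $\|T^{+}f\|_{p,1}$ forces $\int_{0}^{1}t^{\alpha}\,dt<\infty$, i.e.\ $\alpha>-1$; applying it to (mollifications of) the indicators of $[-R,R]\times[1,2]$ and letting $R\to\infty$, the growth in $R$ of the $x$‑integral over $[-R,R]$ (of order $R^{-\gamma}$ if $\gamma<0$, of order $\log R$ if $\gamma=0$) forces $\gamma>0$. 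Finally, to obtain $\beta>0$ I would test on $f_{\epsilon}(x+iy)=\phi(x)g_{\epsilon}(y)$, where $\phi\in C_c^{\infty}(\mathbb{R})$ is a fixed nonnegative bump and $g_{\epsilon}\in C_c^{\infty}((0,\infty))$ is nonnegative, concentrated on an interval of length $\sim\epsilon$ at distance $\sim\epsilon$ from the origin, with $\int g_{\epsilon}=1$; then $\|f_{\epsilon}\|_{p,1}=\|\phi\|_{L^{p}}$, while over heights $\epsilon\lesssim t\lesssim 1$ one has $t+y\sim t$, the $x$‑convolution is of size $\sim t^{-\gamma}$, and $\int g_{\epsilon}(y)y^{\beta}\,dy\sim\epsilon^{\beta}$, so $T^{+}f_{\epsilon}(s+it)\sim\epsilon^{\beta}\,t^{\alpha-\gamma}=\epsilon^{\beta}\,t^{-\beta-1}$ on a fixed $s$‑interval. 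Hence $\|T^{+}f_{\epsilon}\|_{p,1}\gtrsim\epsilon^{\beta}\int_{\epsilon}^{1}t^{-\beta-1}\,dt$, which tends to $\infty$ as $\epsilon\to 0$ whenever $\beta\le 0$ (like $\epsilon^{\beta}$ if $\beta<0$, like $\log(1/\epsilon)$ if $\beta=0$), contradicting boundedness.

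The step I expect to be the main obstacle is this last one, the necessity of $\beta>0$ in the borderline range $-1<\beta\le 0$: there $T^{+}$ is a perfectly well‑defined operator, and it is in fact bounded from $L^{p,q}$ to $L^{p,q}$ for every $q>1$, so the failure at $q=1$ is a genuine endpoint effect that one can only detect through a family concentrating toward the boundary at a carefully chosen scale, the delicate point being to follow how the $y^{\beta}$‑weighted mass of $g_{\epsilon}$ plays against the $t^{-\gamma}$‑decay of the real‑variable kernel across the range of intermediate heights. The scaling identity and the test‑function estimates giving $\alpha>-1$ and $\gamma>0$ are routine.
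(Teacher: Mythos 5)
Your proof is correct, and the sufficiency half is essentially the paper's own argument: you reduce $T^{+}$ on $L^{p,1}$ to $H_{\alpha,\beta,\gamma}$ on $L^{1}((0,\infty))$ via Minkowski and Young (this is exactly Lemma \ref{b1}) and then invoke Theorem \ref{thm:main1} with $p=q=1$, $a=b=0$, whose conditions indeed read $\gamma=\alpha+\beta+1$, $\alpha>-1$, $\beta>0$. For the necessity, the scaling argument for \eqref{eq:main61} and the fixed test function giving $\alpha>-1$ also match the paper (the first and second parts of the proof of Lemma \ref{b2}, specialized to $r=1$, $b=0$). Where you genuinely diverge is the step you correctly identified as the delicate one, $\beta>0$: the paper (Lemma \ref{b5}) passes to the adjoint $(T^{+})^{*}$, which has the roles of $\alpha$ and $\beta$ interchanged, uses that boundedness on $L^{p,1}$ is equivalent to boundedness of the adjoint on $L^{p',\infty}$, and then runs the $\sup_{0<v<1}v^{\beta}/(1+v)^{\gamma}<\infty$ argument of Lemma \ref{b3} to force $\beta>0$; you instead test $T^{+}$ directly on a family $\phi(x)g_{\epsilon}(y)$ concentrating at height $\epsilon$, and track the competition between the mass $\epsilon^{\beta}$ and the profile $t^{-\beta-1}$ over $\epsilon\lesssim t\lesssim 1$. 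Your computation is sound (for $\beta<0$ the norm blows up like $\epsilon^{\beta}$, for $\beta=0$ like $\log(1/\epsilon)$, against $\|f_{\epsilon}\|_{p,1}=\|\phi\|_{L^{p}}$ bounded), and it buys you a self-contained argument that never invokes the duality $(L^{p,1})^{*}=L^{p',\infty}$; the paper's duality route is shorter on the page but only because it recycles Lemma \ref{b3}. Your separate verification that $\gamma>0$ is harmless but redundant, since it already follows from $\gamma=\alpha+\beta+1$, $\alpha>-1$, $\beta>0$.
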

The dual version of the above result is the following.
\begin{theorem}\label{thm:main7}
Let $1< p<\infty$. Then the
following conditions are equivalent:

\begin{itemize}
\item[(a)]

The operator $T^+$ is bounded on
$L^{p,\infty}(\mathbb{R}_+^2)$.

\item[(b)]

The parameters satisfy 
\begin{equation}
\gamma =\a+\ba+1
\end{equation} 
and
\begin{equation}\label{eq:main72}
\alpha>0\,\,\,\textrm{and}\,\,\, \beta>-1.
\end{equation}
\end{itemize}
\end{theorem}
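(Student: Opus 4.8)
\emph{The plan} is to derive Theorem \ref{thm:main7} from Theorem \ref{thm:main6} by a duality argument, using the positivity of the kernel of $T^+$ and the fact that the family $T^+_{\alpha,\beta,\gamma}$ is, up to interchanging $\alpha$ and $\beta$, formally self-adjoint. First I would identify the transpose: with respect to the unweighted pairing $\langle f,g\rangle=\int_{\mathbb{R}_+^2}f(w)g(w)\,dV(w)$, the kernel $(\Im z)^\alpha|z-\bar w|^{-(1+\gamma)}(\Im w)^\beta$ is nonnegative and symmetric in the sense $|z-\bar w|=|w-\bar z|$, so Tonelli's theorem yields, for all nonnegative measurable $f,g$,
$$\langle T^+_{\alpha,\beta,\gamma}f,g\rangle=\langle f,T^+_{\beta,\alpha,\gamma}g\rangle .$$
Thus $T^+_{\beta,\alpha,\gamma}$ is the formal transpose of $T^+_{\alpha,\beta,\gamma}$.

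The second step is the slicewise duality between the mixed-norm spaces. Writing $L^{p,\infty}=L^\infty_y(L^p_x)$ and $L^{p',1}=L^1_y(L^{p'}_x)$, for every nonnegative measurable $u$ one has
$$\|u\|_{L^{p',1}}=\sup\big\{\langle u,f\rangle:\ f\ge 0,\ \|f\|_{p,\infty}\le 1\big\},$$
and symmetrically $\|h\|_{p,\infty}=\sup\{\langle h,g\rangle:\ g\ge 0,\ \|g\|_{L^{p',1}}\le 1\}$ for nonnegative $h$; both identities follow by applying the $L^p$--$L^{p'}$ duality in the variable $x$ for each fixed $y$ and then the elementary $L^\infty_y$--$L^1_y$ duality, extending $T^+$ to all nonnegative measurable functions by monotone convergence (which is legitimate since $T^+$ has a positive kernel). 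Since $|T^+\varphi|\le T^+|\varphi|$ pointwise, the operator norm of $T^+$ on either space is attained by testing on nonnegative functions only. Combining this with the first step and interchanging the two suprema, one gets
$$\|T^+_{\alpha,\beta,\gamma}\|_{L^{p,\infty}\to L^{p,\infty}}=\sup_{f,g\ge 0}\langle T^+_{\alpha,\beta,\gamma}f,g\rangle=\sup_{f,g\ge 0}\langle f,T^+_{\beta,\alpha,\gamma}g\rangle=\|T^+_{\beta,\alpha,\gamma}\|_{L^{p',1}\to L^{p',1}},$$
where the suprema are over the respective unit balls, with the convention that the two sides are finite simultaneously.

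It then remains only to conclude. Since $1<p<\infty$ forces $1<p'<\infty$, Theorem \ref{thm:main6} applies to the operator $T^+_{\beta,\alpha,\gamma}$ on $L^{p',1}(\mathbb{R}_+^2)$ and tells us it is bounded there if and only if $\gamma=\beta+\alpha+1$ together with $\beta>-1$ and $\alpha>0$. By the norm identity of the second step this is equivalent to the boundedness of $T^+_{\alpha,\beta,\gamma}$ on $L^{p,\infty}(\mathbb{R}_+^2)$, and the listed constraints are exactly condition (b) of Theorem \ref{thm:main7} (the relation $\gamma=\alpha+\beta+1$ being symmetric in $\alpha,\beta$). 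The limit cases $a=b=0$ and the structure of the problem being identical, no further work is needed.

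The point that requires care is the second step: $L^{p,\infty}$ as defined here (with the supremum over $y$) is a nonseparable dual space whose Banach dual is strictly larger than $L^{p',1}$, so one must not invoke $(L^{p,\infty})^{*}=L^{p',1}$. The positivity of the kernel is precisely what circumvents this: it reduces the computation of the operator norm to a supremum over nonnegative test functions, for which Tonelli's theorem, monotone convergence, and the elementary slicewise $L^p$--$L^{p'}$ and $L^1$--$L^\infty$ dualities supply everything needed, with no appeal to any functional-analytic dual space. Everything else is a bookkeeping change of notation ($\alpha\leftrightarrow\beta$, $p\mapsto p'$) in Theorem \ref{thm:main6}.
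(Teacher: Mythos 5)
Your proof is correct and follows the same route as the paper, which disposes of Theorem \ref{thm:main7} in one line as ``the dual version of Theorem \ref{thm:main6}'' (necessity via the transpose $T^+_{\beta,\alpha,\gamma}$ and Lemma \ref{b5}, sufficiency via Lemma \ref{b1}); your positive-kernel/Tonelli justification of the norm identity $\|T^+_{\alpha,\beta,\gamma}\|_{L^{p,\infty}\to L^{p,\infty}}=\|T^+_{\beta,\alpha,\gamma}\|_{L^{p',1}\to L^{p',1}}$ is precisely the content that one-line assertion suppresses, and correctly avoids invoking $(L^{p,\infty})^{*}=L^{p',1}$. The only caveat: since the paper defines $\|\cdot\|_{p,\infty}$ with a genuine supremum over $y$, the slicewise $L^{\infty}_y$--$L^{1}_y$ duality only recovers the essential supremum, so you should add that for $f\ge 0$ the map $y\mapsto\|(T^{+}f)_y\|_{L^p(dx)}$ is lower semicontinuous (Fatou, the kernel being continuous), which makes the supremum and essential supremum agree on the range of $T^{+}$.
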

\subsubsection{Diagonal limit cases}
Finally, we obtain the following two endpoint results. 
\begin{theorem}\label{thm:main8} The operator $T_{\a,\ba,\ga}^{+}$ is bounded on
$L^{\infty}(\mathbb{R}_+^2)$ if and only if $\a >0$,  $\ba
> -1$ and $\ga=\a + \ba + 1$. Moreover,
$$\|T_{\a,\ba,\ga}^{+}\|_{L^\infty (\mathbb{R}_+^2)\rightarrow L^\infty (\mathbb{R}_+^2)}=B(\frac{1}{2},\frac{\gamma}{2})B(\beta+1,\alpha).$$
\end{theorem}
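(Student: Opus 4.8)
The plan is to reduce the two-dimensional problem to a pair of one-dimensional computations: one in the ``vertical'' variable, handled by the Hilbert-type operator $H_{\a,\ba,\ga}$ of Theorem \ref{thm:main3inftyinfty}, and one in the ``horizontal'' variable, handled by a scalar convolution integral that produces the factor $B(\tfrac12,\tfrac{\ga}{2})$. First I would test $T_{\a,\ba,\ga}^+$ on the constant function $f\equiv 1$. Writing $z=x+iy$, $w=u+iv$, we have $|z-\bar w|^2=(x-u)^2+(y+v)^2$, so
\begin{equation*}
T_{\a,\ba,\ga}^+1(z)=y^\a\int_0^\infty v^\ba\left(\int_{\R}\frac{du}{\left((x-u)^2+(y+v)^2\right)^{\frac{1+\ga}{2}}}\right)dv.
\end{equation*}
The inner integral is independent of $x$; substituting $u-x=(y+v)t$ gives $(y+v)^{-\ga}\int_\R(1+t^2)^{-\frac{1+\ga}{2}}\,dt$, and the latter integral equals $B(\tfrac12,\tfrac{\ga}{2})$ (standard Beta-function evaluation, valid precisely when $\ga>0$, which will follow from $\a>0,\ba>-1,\ga=\a+\ba+1$). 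Hence $T_{\a,\ba,\ga}^+1(z)=B(\tfrac12,\tfrac{\ga}{2})\,y^\a\int_0^\infty\frac{v^\ba}{(y+v)^\ga}\,dv=B(\tfrac12,\tfrac{\ga}{2})\,H_{\a,\ba,\ga}(1)(y)$, where $1$ on the right denotes the constant function on $(0,\infty)$.

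From here the theorem follows from Theorem \ref{thm:main3inftyinfty}. For the sufficiency of the condition and the upper bound on the norm: assuming $\a>0$, $\ba>-1$, $\ga=\a+\ba+1$, for an arbitrary $f$ with $\|f\|_\infty\le 1$ we have $|T_{\a,\ba,\ga}^+f(z)|\le T_{\a,\ba,\ga}^+1(z)=B(\tfrac12,\tfrac{\ga}{2})H_{\a,\ba,\ga}(1)(y)\le B(\tfrac12,\tfrac{\ga}{2})\|H_{\a,\ba,\ga}\|_{L^\infty\to L^\infty}=B(\tfrac12,\tfrac{\ga}{2})B(\ba+1,\a)$ by Theorem \ref{thm:main3inftyinfty}; taking the supremum over $z$ gives $\|T_{\a,\ba,\ga}^+\|_{L^\infty\to L^\infty}\le B(\tfrac12,\tfrac{\ga}{2})B(\ba+1,\a)$. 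For the reverse inequality, the constant function $f\equiv 1$ belongs to $L^\infty(\R_+^2)$ with norm $1$, and $\|T_{\a,\ba,\ga}^+1\|_\infty=B(\tfrac12,\tfrac{\ga}{2})\sup_{y>0}H_{\a,\ba,\ga}(1)(y)=B(\tfrac12,\tfrac{\ga}{2})B(\ba+1,\a)$ since $H_{\a,\ba,\ga}(1)(y)=B(\ba+1,\a)$ is constant in the diagonal case $\ga=\a+\ba+1$ (a one-line substitution $v=yt$). This pins the norm exactly.

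For the necessity direction: if $T_{\a,\ba,\ga}^+$ is bounded on $L^\infty(\R_+^2)$, I would first argue that a homogeneity/dilation scaling in $z\mapsto\la z$ forces the relation $\ga=\a+\ba+1$ (otherwise $T_{\a,\ba,\ga}^+1$ is a nonconstant power of $y$, hence unbounded, or the integral defining it diverges); then convergence of the inner $u$-integral forces $\ga>0$, and convergence of the resulting $v$-integral $\int_0^\infty v^\ba(y+v)^{-\ga}\,dv$ near $v=0$ and $v=\infty$ forces $\ba>-1$ and $\ga-\ba>0$, the latter being $\a>0$. These are exactly the constraints of Theorem \ref{thm:main3inftyinfty} transferred through the identity above, so boundedness of $T^+$ implies boundedness of $H_{\a,\ba,\ga}$ on $L^\infty(0,\infty)$, which by that theorem yields the stated conditions. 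The main obstacle I anticipate is the necessity argument: one must be careful that testing on a single function (the constant $1$) genuinely captures all the constraints, and in particular that the scaling argument ruling out $\ga\ne\a+\ba+1$ is done rigorously (e.g.\ by applying the bounded operator to dilates $f_\la(z)=f(\la z)$, which have the same $L^\infty$ norm, and comparing). Everything else is a direct Beta-function bookkeeping exercise plus a citation of Theorem \ref{thm:main3inftyinfty}.
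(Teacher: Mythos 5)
Your proposal is correct and follows essentially the same route as the paper: since $T^+_{\a,\ba,\ga}$ has a positive kernel, its $L^\infty\to L^\infty$ norm is $\sup_z T^+_{\a,\ba,\ga}1(z)$, which both you and the authors evaluate by first integrating in $u$ (producing $B(\tfrac12,\tfrac{\ga}{2})(y+v)^{-\ga}$ via Lemma \ref{lem:integkernel}) and then in $v$ (producing $B(\ba+1,\a)$ in the diagonal case, the paper citing Lemma \ref{lem:integkernel} directly where you route it through Theorem \ref{thm:main3inftyinfty}), with necessity in both cases extracted from convergence of this integral and the vanishing of the resulting power of $y$. One small slip: convergence of $\int_0^\infty v^{\ba}(y+v)^{-\ga}\,dv$ at infinity requires $\ga-\ba>1$ (not $\ga-\ba>0$), which under $\ga=\a+\ba+1$ is exactly $\a>0$ as you conclude.
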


\begin{theorem}\label{thm:main9} Let $a>-1$. Then the operator $T_{\a,\ba,\ga}^{+}$ is bounded on
$L_a^{1}(\mathbb{R}_+^2)$ if and only if $\ga=\a + \ba + 1$ and $-\alpha<a+1<\beta+1$. Moreover,
$$\|T_{\a,\ba,\ga}^{+}\|_{L_a^1 (\mathbb{R}_+^2)\rightarrow L_a^1 (\mathbb{R}_+^2)}=B(\frac{1}{2},\frac{\gamma}{2})B(\beta-a,\alpha+a+1).$$
\end{theorem}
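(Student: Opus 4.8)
The plan is to exploit the positivity of the kernel of $T^{+}_{\a,\ba,\ga}$ to reduce the question to a single explicit integral, in the spirit of the computation behind Theorem~\ref{thm:main8}. For $f\ge 0$, Tonelli's theorem gives
\begin{align*}
\|T^{+}f\|_{L_a^1}
&=\int_{\mathbb{R}_+^2}(\Im z)^{\a+a}\Bigl(\int_{\mathbb{R}_+^2}\frac{f(w)(\Im w)^{\ba}}{|z-\bar w|^{1+\ga}}\,dV(w)\Bigr)dV(z)\\
&=\int_{\mathbb{R}_+^2} f(w)(\Im w)^{\ba}\,I(w)\,dV(w),\qquad
I(w):=\int_{\mathbb{R}_+^2}\frac{(\Im z)^{\a+a}}{|z-\bar w|^{1+\ga}}\,dV(z),
\end{align*}
so everything reduces to evaluating $I(w)\in[0,\infty]$.

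To compute $I(w)$, write $z=x+iy$, $w=u+iv$, so that $|z-\bar w|^2=(x-u)^2+(y+v)^2$. First I would integrate in $x$, using $\int_{\mathbb{R}}(1+s^2)^{-(1+\ga)/2}\,ds=B(\tfrac12,\tfrac{\ga}{2})$, to get $\int_{\mathbb{R}}((x-u)^2+(y+v)^2)^{-(1+\ga)/2}\,dx=B(\tfrac12,\tfrac{\ga}{2})(y+v)^{-\ga}$, which is finite precisely when $\ga>0$. Then the substitution $y=vt$ and the identity $\int_0^\infty t^{\a+a}(1+t)^{-\ga}\,dt=B(\a+a+1,\ga-\a-a-1)$ give $\int_0^\infty y^{\a+a}(y+v)^{-\ga}\,dy=v^{\a+a+1-\ga}B(\a+a+1,\ga-\a-a-1)$, finite precisely when $\a+a+1>0$ and $\ga-\a-a-1>0$. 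Hence, under these three conditions,
$$I(w)=B(\tfrac12,\tfrac{\ga}{2})\,B(\a+a+1,\ga-\a-a-1)\,(\Im w)^{\a+a+1-\ga},$$
and otherwise $I(w)\equiv+\infty$.

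Substituting back, for $f\ge0$ one gets $\|T^{+}f\|_{L_a^1}=c\int_{\mathbb{R}_+^2}f(w)(\Im w)^{\ba+\a+a+1-\ga}\,dV(w)$ with $c=B(\tfrac12,\tfrac{\ga}{2})B(\a+a+1,\ga-\a-a-1)$. Comparing with $\|f\|_{L_a^1}=\int f(w)(\Im w)^a\,dV(w)$, an $L_a^1\to L_a^1$ bound is possible only if the two powers of $\Im w$ agree, i.e. $\ga=\a+\ba+1$; otherwise, testing on functions supported in a thin strip $\{\,\Im w\approx\varepsilon\,\}$ and letting $\varepsilon\to0$ or $\varepsilon\to\infty$ makes the ratio blow up, and the same test functions dispose of the case $I\equiv\infty$. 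Under $\ga=\a+\ba+1$ the convergence conditions $\a+a+1>0$ and $\ga-\a-a-1=\ba-a>0$ read $-\a<a+1<\ba+1$ (and these already force $\ga=\a+\ba+1>0$). In that case $\|T^{+}f\|_{L_a^1}=c\,\|f\|_{L_a^1}$ for every $f\ge0$, with $c=B(\tfrac12,\tfrac{\ga}{2})B(\ba-a,\a+a+1)$; for general $f$ the pointwise inequality $|T^{+}f|\le T^{+}|f|$ upgrades this to $\|T^{+}f\|_{L_a^1}\le c\,\|f\|_{L_a^1}$, while testing on nonnegative $f$ shows it is attained, giving the stated norm.

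The routine points are the justification of Tonelli's theorem (immediate by nonnegativity) and the two Beta-integral evaluations recalled above. The only step needing genuine care is the implication ``bounded $\Rightarrow\ \ga=\a+\ba+1$ and $-\a<a+1<\ba+1$'': there one must produce an explicit family of test functions (or, equivalently, invoke the dilation $f\mapsto f(\delta\,\cdot)$ together with the homogeneity of the kernel $|z-\bar w|^{-1-\ga}$) to rule out both a strict mismatch of the exponents of $\Im w$ and the divergence of $I(w)$. The evaluation of $I(w)$ is the heart of the argument and is entirely elementary.
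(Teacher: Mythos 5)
Your proof is correct and follows essentially the same route as the paper: the paper invokes the sharp $L^1$ Schur test (Lemma \ref{lem:okikiolulimitcasesharp}, due to Markovi\'c), whose content is exactly your Tonelli/column-sum computation, and then evaluates the same kernel integral $\sup_{w}\int K(z,w)\,(\Im z)^a\,dV(z)$ via Lemma \ref{lem:integkernel}. The only difference is that you reprove the $L^1$ characterization inline (including the necessity via dilation/thin-strip test functions) rather than citing it, which makes the argument self-contained but not substantively different.
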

\vskip .3cm
In the next section we provide some useful tools needed in our proofs. The proofs of our results are given in section 4 and section 5. In the last section, we discuss the boundedness of the operators $T_{\alpha,\beta, \gamma}$ with application to the Bergman projection.
\vskip .2cm
As usual, given two positive quantities $A$ and $B$, the notation $A\lesssim B$ (or $B\gtrsim A$) means that there is universal positive constant $C$ such that $A\le CB$. When $A\lesssim B$ and $B\lesssim A$, we say $A$ and $B$ are equivalent and write $A\simeq B$. 
\section{Some Useful Results}
\subsection{Integrability of some positive kernel functions}
We shall use the following form of the $\beta-$function: $$B(m, n)= B(n,m) = \int_0^{\infty}\!\!\!\frac{u^{m-1}}{(1+u)^{m+n}}\mathrm{d}u\qquad\mbox{where}\quad m,n>0.$$
More generally, we will be using the following which is easy to check.
\begin{lemma}\label{lem:betafunctionconditions}
Let $\alpha, \beta$ be a real numbers, and $t>0$ be fixed. Then the integral $$I(t)=\int_0^\infty \frac{y^\alpha}{(t+y)^\beta}\mathrm{d}y$$
converges if and only if $\alpha>-1$ and $\beta-\alpha>1$. In this case, $$I(y)=B(\alpha+1, \beta-\alpha-1)t^{-\beta+\alpha+1}.$$
\end{lemma}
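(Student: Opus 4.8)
The plan is to reduce everything to the single model integral $\int_0^\infty \frac{u^\alpha}{(1+u)^\beta}\,\mathrm{d}u$ by a scaling substitution, and then to read off both the convergence criterion and the closed form from the definition of $B(\cdot,\cdot)$ recalled just above the statement.

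First I would perform the change of variables $y=tu$, which is legitimate since $t>0$; it gives $\mathrm{d}y=t\,\mathrm{d}u$, $y^\alpha=t^\alpha u^\alpha$ and $(t+y)^\beta=t^\beta(1+u)^\beta$, hence
$$I(t)=t^{\alpha+1-\beta}\int_0^\infty\frac{u^\alpha}{(1+u)^\beta}\,\mathrm{d}u.$$
Thus $I(t)$ converges for one $t>0$ if and only if it converges for all $t>0$, and the whole problem is reduced to the case $t=1$.

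Next I would analyse the convergence of $J:=\int_0^\infty\frac{u^\alpha}{(1+u)^\beta}\,\mathrm{d}u$ by splitting the interval at $u=1$. On $(0,1]$ the factor $(1+u)^{-\beta}$ is bounded above and below by positive constants, so the integrand is comparable to $u^\alpha$ and this piece is finite precisely when $\alpha>-1$. On $[1,\infty)$ one has $1+u\simeq u$, so the integrand is comparable to $u^{\alpha-\beta}$ and this piece is finite precisely when $\alpha-\beta<-1$, that is $\beta-\alpha>1$. Hence $J<\infty$ if and only if $\alpha>-1$ and $\beta-\alpha>1$, which together with the scaling identity yields the stated convergence criterion.

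Finally, when these two inequalities hold I would set $m=\alpha+1>0$ and $n=\beta-\alpha-1>0$, so that $m-1=\alpha$ and $m+n=\beta$; the definition $B(m,n)=\int_0^\infty u^{m-1}(1+u)^{-(m+n)}\,\mathrm{d}u$ then gives $J=B(\alpha+1,\beta-\alpha-1)$, and combining with the first displayed formula produces $I(t)=B(\alpha+1,\beta-\alpha-1)\,t^{-\beta+\alpha+1}$. There is essentially no real obstacle; the only point worth a remark is that the two conditions for $J$ to converge are exactly the positivity conditions $m,n>0$ needed for the Beta-function identity to apply, so the ``in this case'' clause is automatic.
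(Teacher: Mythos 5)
Your proof is correct and is exactly the standard argument; the paper itself offers no proof of this lemma (it is dismissed as ``easy to check''), and your scaling substitution $y=tu$ followed by splitting the model integral at $u=1$ and matching $m=\alpha+1$, $n=\beta-\alpha-1$ with the paper's definition of $B(m,n)$ is precisely what is intended. Nothing is missing.
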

We will need the following integrability conditions of the kernel function.
\begin{lemma}\label{lem:integkernel} Let $\a$ be real. Then
\begin{itemize}
\item[(1)] for $y>0$ fixed, the integral
$$J_{\a}(y)=\int_{\R}\frac{dx}{|x+iy|^\alpha}
$$ converges if and only if $\a > 1.$ In this case,
$$J_{\a}(y)=B(\frac{1}{2}, \frac{\alpha-1}{2})y^{1-\alpha};$$
\item[(2)] the function
$f(z)=\left(\frac{z+it}{i}\right)^{-\alpha}$, with $t>0$, belongs to
$L_{\nu}^{p,q}(\mathbb{R}_+^2)$, if and only if $\nu>-1$ and $\a >
\frac{1}{p} + \frac{\nu+1}{q}$. In this
case,$$||f||_{p,q,\nu}^q=C_{\a,p,q}t^{-q\alpha+\frac{q}{p}+\nu+1}$$
where $C_{\a,p,q}=[B(\frac{1}{2}, \frac{p\alpha-1}{2})]^{\frac{q}{p}}B(\nu+1, q\alpha-\frac{q}{p}-\nu-1)$.
\end{itemize}
\end{lemma}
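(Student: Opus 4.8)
The plan is to reduce everything to the Beta-function integral of Lemma \ref{lem:betafunctionconditions} by two elementary substitutions.

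For part (1), I would start from the identity $|x+iy|^{\alpha}=(x^2+y^2)^{\alpha/2}$, so that by the symmetry in $x$
\[
J_\alpha(y)=2\int_0^\infty (x^2+y^2)^{-\alpha/2}\,dx .
\]
The substitution $x^2=s$ converts this into $\int_0^\infty s^{-1/2}(s+y^2)^{-\alpha/2}\,ds$, which is exactly the integral $I(y^2)$ of Lemma \ref{lem:betafunctionconditions} with numerator exponent $-\tfrac12$ and denominator exponent $\tfrac{\alpha}{2}$. That lemma then gives convergence precisely when $-\tfrac12>-1$ (automatic) and $\tfrac{\alpha}{2}-\bigl(-\tfrac12\bigr)>1$, i.e. $\alpha>1$, together with the stated value $B\bigl(\tfrac12,\tfrac{\alpha-1}{2}\bigr)y^{1-\alpha}$.

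For part (2) the first point is to make sense of $f$ and compute $|f|$. Writing $z=x+iy$ with $y>0$ and $t>0$, one has $\frac{z+it}{i}=(y+t)-ix$, which has positive real part; hence the principal branch of $w\mapsto w^{-\alpha}$ makes $f$ a well-defined (indeed holomorphic) function on $\mathbb{R}_+^2$, and $|f(z)|=\bigl|(y+t)-ix\bigr|^{-\alpha}=\bigl(x^2+(y+t)^2\bigr)^{-\alpha/2}$. Since all integrands are nonnegative I would then evaluate the mixed norm by Tonelli: for fixed $y$ the inner integral $\int_{\R}|f(x+iy)|^{p}\,dx=J_{p\alpha}(y+t)$ is, by part (1), finite iff $p\alpha>1$, in which case it equals $B\bigl(\tfrac12,\tfrac{p\alpha-1}{2}\bigr)(y+t)^{1-p\alpha}$. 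Raising this to the power $q/p$ and integrating against $y^\nu\,dy$ leaves
\[
\|f\|_{p,q,\nu}^q=\Bigl[B\bigl(\tfrac12,\tfrac{p\alpha-1}{2}\bigr)\Bigr]^{q/p}\int_0^\infty \frac{y^\nu}{(y+t)^{\,q\alpha-q/p}}\,dy .
\]
A second application of Lemma \ref{lem:betafunctionconditions} (with numerator exponent $\nu$, denominator exponent $q\alpha-q/p$, and parameter $t$) shows this last integral converges iff $\nu>-1$ and $q\alpha-\tfrac qp-\nu-1>0$, i.e. $\alpha>\tfrac1p+\tfrac{\nu+1}{q}$, with value $B\bigl(\nu+1,\,q\alpha-\tfrac qp-\nu-1\bigr)\,t^{-q\alpha+q/p+\nu+1}$; this is precisely the asserted formula, with $C_{\alpha,p,q}$ as stated.

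Finally I would observe that the two convergence requirements collapse into the single condition of the statement: when $\nu>-1$ we have $\tfrac{\nu+1}{q}>0$, so $\alpha>\tfrac1p+\tfrac{\nu+1}{q}$ already forces $p\alpha>1$; conversely, if either $\nu\le-1$ or $\alpha\le\tfrac1p+\tfrac{\nu+1}{q}$, then one of the two integrals above diverges, and by monotonicity this genuinely means $\|f\|_{p,q,\nu}=\infty$, so $f\notin L_\nu^{p,q}(\mathbb{R}_+^2)$. I do not expect a real obstacle in this lemma; the only steps that deserve a moment's care are the branch/modulus computation for $f$ in part (2) and making explicit, via Tonelli, that the failure of the closed-form evaluation is the same as non-integrability.
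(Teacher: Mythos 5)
Your proof is correct and follows essentially the same route as the paper: reduce part (1) to the Beta-function integral of Lemma \ref{lem:betafunctionconditions} via the symmetry in $x$ and a change of variables, then obtain part (2) by applying part (1) to the inner integral and Lemma \ref{lem:betafunctionconditions} to the outer one. The only cosmetic difference is that the paper handles the divergence for $\alpha\le 1$ by a direct comparison with $\int_y^\infty x^{-\alpha}\,dx$, whereas you read it off from the ``only if'' direction of the Beta-function lemma; your write-up also usefully makes explicit the branch computation $\frac{z+it}{i}=(y+t)-ix$ and the fact that $\nu>-1$ together with $\alpha>\frac1p+\frac{\nu+1}{q}$ forces $p\alpha>1$.
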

\begin{proof} 
$(1)$ Note that $$J_{\a}(y)=2\int_0^\infty \frac{dx}{(x^2+y^2)^{\frac{\alpha}{2}}}.$$
If $\alpha\le 1$, then as $\int_y^\infty x^{-\alpha}dx$ which is smaller than $J_{\a}(y)$ does not converge, neither  does $J_{\a}(y)$. 

Assuming that $\alpha>1$, the convergence and the value of $J_{\a}(y)$ follows from an easy change of variables and Lemma \ref{lem:betafunctionconditions}.

The proof of assertion $(2)$ follows from assertion $(1)$ and Lemma \ref{lem:betafunctionconditions}.
\end{proof}

\subsection{Schur-type tests}
The following is a generalization of the Schur's test and it is due to G.O. Okikiolu \cite{Oki}. Our statement is a bit different from \cite{Oki}. We also provide a different proof.
\begin{lemma}\label{lem:okikiolu}
Let $p,r,q$ be positive numbers such that $1<p\le r$ and $\frac{1}{p} + \frac{1}{q} = 1.$ Let $K(x,y)$ be a complex-value function measurable on $X\times Y$ and suppose there exist $0<t\le 1$, measurable functions $\phi_1:X\rightarrow (0,\infty),\quad\phi_2:Y\rightarrow (0,\infty)$ and nonnegative constants $M_1,M_2$ such that
\begin{eqnarray}
\label{ee1} 
\int_X\left|K(x,y)\right|^{tq}\phi_1^{q}(y)\mathrm{d}\mu(y) &\le & M_1^{q}\phi_2^{q}(x)\qquad\mbox{a.e on}\quad Y\quad\mbox{and}\\ 
\label{ee2}
\int_Y\left|K(x,y)\right|^{(1-t)r}\phi_2^r(x)\mathrm{d}\nu(x) &\le  &M_2^r\phi_1^r(y)\qquad\mbox{a.e on}\quad X.
\end{eqnarray} 
If $T$ is given by $$Tf(x)=\int_X\!\!\!f(y)K(x,y)\mathrm{d}\mu(y)$$ where $f\in L^p(X,\mathrm{d}\mu),$ then $T:L^p(X,\mathrm{d}\mu)\longrightarrow L^r(Y,\mathrm{d}\nu)$ is bounded and for each $f\in L^p(X,\mathrm{d}\mu)$, $$\left\|Tf\right\|_{L^r(Y,\mathrm{d}\nu)}\le M_1M_2\|f\|_{L^r(X,\mathrm{d}\mu)}.$$
\end{lemma}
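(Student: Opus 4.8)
The plan is to run a weighted H\"older inequality against the kernel and then feed the outcome into Minkowski's integral inequality; this is precisely the mechanism that upgrades the classical Schur test (the case $p=r$, $t=\tfrac12$) to the off-diagonal range $p\le r$ with an arbitrary splitting exponent $t\in(0,1]$. Since $|Tf(x)|\le\int_X|f(y)|\,|K(x,y)|\,\mathrm d\mu(y)$, it suffices to treat $f\ge0$ and $K\ge0$; the bound obtained will in particular show that the defining integral converges for $\nu$-almost every $x$. Concretely, I will establish $\|Tf\|_{L^r(Y,\mathrm d\nu)}\le M_1M_2\|f\|_{L^p(X,\mathrm d\mu)}$.

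First I would split the kernel as $K(x,y)=\bigl(K(x,y)^{t}\phi_1(y)\bigr)\cdot\bigl(K(x,y)^{1-t}\phi_1(y)^{-1}\bigr)$, which is legitimate since $0<t\le1$ and $\phi_1>0$, and apply H\"older's inequality on $(X,\mathrm d\mu)$ with the conjugate exponents $q$ and $p$ to the two bracketed factors, attaching $f(y)$ to the second. The first resulting integral is exactly $\int_X K(x,y)^{tq}\phi_1(y)^{q}\,\mathrm d\mu(y)$, which by \eqref{ee1} is at most $M_1^{q}\phi_2(x)^{q}$; hence, writing $H(x):=\int_X K(x,y)^{(1-t)p}\phi_1(y)^{-p}f(y)^{p}\,\mathrm d\mu(y)$, I get the pointwise estimate $Tf(x)\le M_1\,\phi_2(x)\,H(x)^{1/p}$. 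Raising this to the power $r$, integrating in $x$ against $\mathrm d\nu$, and putting $s:=r/p\ge1$, the right-hand side becomes $M_1^{r}\bigl\|\int_X G(\cdot,y)\,\mathrm d\mu(y)\bigr\|_{L^{s}(Y,\mathrm d\nu)}^{s}$ with $G(x,y):=\phi_2(x)^{p}K(x,y)^{(1-t)p}\phi_1(y)^{-p}f(y)^{p}\ge0$, since $\phi_2(x)^{r}=(\phi_2(x)^{p})^{s}$.

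Because $s\ge1$ and $G\ge0$, Minkowski's integral inequality now lets me pull the $L^{s}(Y,\mathrm d\nu)$ norm inside the $\mathrm d\mu$ integral, reducing the bound to $M_1^{r}\bigl(\int_X\bigl(\int_Y G(x,y)^{s}\,\mathrm d\nu(x)\bigr)^{1/s}\mathrm d\mu(y)\bigr)^{s}$. At this point the exponents match perfectly: $sp=r$ and $s(1-t)p=(1-t)r$, so $G(x,y)^{s}=\phi_2(x)^{r}K(x,y)^{(1-t)r}\phi_1(y)^{-r}f(y)^{r}$, and the inner $x$-integral equals $\phi_1(y)^{-r}f(y)^{r}\int_Y\phi_2(x)^{r}K(x,y)^{(1-t)r}\,\mathrm d\nu(x)\le M_2^{r}f(y)^{r}$ by \eqref{ee2}. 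Taking $s$-th roots (and using $r/s=p$) turns this into $M_2^{p}f(y)^{p}$, whose $\mathrm d\mu$-integral is $M_2^{p}\|f\|_{L^p(X,\mathrm d\mu)}^{p}$. Collecting the estimates gives $\|Tf\|_{L^r(Y,\mathrm d\nu)}^{r}\le M_1^{r}\bigl(M_2^{p}\|f\|_{L^p}^{p}\bigr)^{s}=M_1^{r}M_2^{r}\|f\|_{L^p(X,\mathrm d\mu)}^{r}$, which is the assertion. The argument involves no real obstacle beyond bookkeeping the exponents; the only genuinely non-classical step is the appeal to Minkowski's integral inequality, and this is exactly where the hypothesis $p\le r$ is used — in the diagonal case $p=r$ it degenerates to Tonelli's theorem and one recovers the usual Schur test.
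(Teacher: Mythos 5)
Your proof is correct and follows essentially the same route as the paper: the $t$-splitting of the kernel combined with H\"older's inequality against $\phi_1$, followed by Minkowski's integral inequality with exponent $r/p\ge 1$ and the second Schur condition. (The $\|f\|_{L^r}$ on the right-hand side of the stated conclusion is a typo for $\|f\|_{L^p}$, which is what both you and the paper actually prove.)
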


\begin{proof}
Using H\"{o}lder's inequality and \eqref{ee1}, we obtain that 
\begin{eqnarray*}
|Tf(x)|& \le & \int_X\!\!\!|f(y)||K(x,y)|\mathrm{d}\mu(y)\\
	& = & \int_X\!\!\left[|K(x,y)|^t\phi_1(y)\right]\left[|K(x,y)^{1-t}\phi_1^{-1}(y)|f(y)|\right]\mathrm{d}\mu(y)\\
	& \le & \left[\int_X\!\!|K(x,y)|^{tq}\phi_1^q(y)\mathrm{d}\mu(y)\right]^{\frac{1}{q}}\left[\int_X\!\!|K(x,y)|^{(1-t)p}\phi_1^{-p}(y)|f(y)|^p\mathrm{d}\mu(y)\right]^{\frac{1}{p}}\\
	& \le & M_1\phi_2(x)\left[\int_X\!\!|K(x,y)|^{(1-t)p}\phi_1^{-p}(y)|f(y)|^p\mathrm{d}\mu(y)\right]^{\frac{1}{p}}.
\end{eqnarray*}
Using Minkowski's inequality for double integrals and \eqref{ee2}, we obtain
\begin{eqnarray*}
\left\|Tf\right\|_{L^r(Y,\mathrm{d}\nu)} &=& \left(\int_Y\!\!|Tf(x)|^r\mathrm{d}\nu(x)\right)^{\frac{1}{r}}\\ & \le & \left(\int_Y\!\!M_1^r\phi_2^r(x)\left[\int_X\!\!|K(x,y)|^{(1-t)p}\phi_1^{-p}(y)|f(y)|^p\mathrm{d}\mu(y)\right]^{\frac{r}{p}} \mathrm{d}\nu(x)\right)^{\frac{1}{r}}\\
& = & M_1\left(\int_Y\!\!\phi_2^r(x)\left[\int_X\!\!|K(x,y)|^{(1-t)p}\phi_1^{-p}(y)|f(y)|^p\mathrm{d}\mu(y)\right]^{\frac{r}{p}} \mathrm{d}\nu(x)\right)^{\frac{p}{r}\times\frac{1}{p}}\\
& \le &M_1\left(\int_X\!\!\left[\int_Y\!\!|K(x,y)|^{(1-t)r}\phi_2^r(x)\mathrm{d}\nu(x)\right]^{\frac{p}{r}}\phi_1^{-p}(y)|f(y)|^p\mathrm{d}\mu(y) \right)^{\frac{1}{p}}\\
& = & M_1\left(\int_X\!\!\phi_1^{-p}(y)M_2^p\phi_1^p(y)|f(y)|^p\mathrm{d}\mu(y) \right)^{\frac{1}{p}}\\
& = & M_1M_2\left(\int_X\!\!|f(y)|^p\mathrm{d}\mu(y)\right)^{\frac{1}{p}}\\
& = & M_1M_2\|f\|_{L^p(X,\mathrm{d}\mu)}.  
\end{eqnarray*} 
\end{proof}
When $p=r$, take $t=\frac{1}{q}$ to obtain the classical Schur's test. 

The following limit case of Okikiolu result is proved in \cite{Zhao}.
\begin{lemma}\label{lem:okikiolulimitcase}
Let $\mu$ and $\nu$ be positive measures on the space $X$ and let $K(x, y)$ be non-negative measurable functions on $X\times Y.$ Let $T$ be the integral operator with kernel $K(x,y)$ defined by $$Tf(x) = \int_X\!\! f(y)K(x, y)\mathrm{d}\mu(y).$$ Suppose $1=p\le q<\infty.$ Let $\gamma$ and $\delta$ be two real numbers such that $\gamma + \delta = 1.$ If there exist positive functions $h_1$ and $h_2$ with positive constants $C_1$ and $C_2$ such that $$\mbox{ess}\sup_{y\in Y}h_1(y)K(x,y)^{\gamma}\le C_1h_2(x)\quad\mbox{for almost all } x\in X\quad\mbox{and}$$ $$\int_X\!\!h_2(x)^qK(x,y)^{\delta q}\mathrm{d}\nu\le C_2h_1(y)^q\quad\mbox{for almost all } y\in Y,$$ then $T$ is bounded from $L^1(X,\mathrm{d}\nu)$ into $L^q(X,\mathrm{d}\nu)$ and the norm of this operator does not exceed $C_1C_2^{\frac{1}{q}}.$
\end{lemma}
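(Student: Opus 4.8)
The plan is to adapt the proof of Lemma \ref{lem:okikiolu} to the endpoint $p=1$, where H\"older's inequality degenerates (its conjugate exponent becomes $\infty$) and must be replaced by a pointwise estimate followed by Minkowski's integral inequality. Since $K\ge 0$, we may assume $f\ge 0$ (otherwise replace $f$ by $|f|$ and use $|Tf|\le T|f|$). Writing $\gamma,\delta$ for the exponents with $\gamma+\delta=1$, I would factor the kernel as $K(x,y)=K(x,y)^{\gamma}K(x,y)^{\delta}$ and start from
$$Tf(x)=\int_X f(y)\,K(x,y)^{\gamma}K(x,y)^{\delta}\,\mathrm{d}\mu(y).$$

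First I would exploit the essential-supremum hypothesis. From $\operatorname{ess\,sup}_{y}h_1(y)K(x,y)^{\gamma}\le C_1h_2(x)$, valid for almost every $x$, and using $h_1>0$, I get the pointwise bound $K(x,y)^{\gamma}\le C_1h_2(x)/h_1(y)$ for almost every $x$ and almost every $y$. Substituting this into the first factor pulls all the $x$-dependence outside the integral as $h_2(x)$, giving
$$Tf(x)\le C_1\,h_2(x)\int_X\frac{f(y)}{h_1(y)}\,K(x,y)^{\delta}\,\mathrm{d}\mu(y).$$

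The central step is to take the $L^q(\mathrm{d}\nu)$ norm in $x$ and invoke Minkowski's integral inequality, legitimate because $q\ge 1$ and every quantity is nonnegative, to move the norm inside the $y$-integral:
$$\|Tf\|_{L^q(\mathrm{d}\nu)}\le C_1\int_X\frac{f(y)}{h_1(y)}\left(\int_X h_2(x)^{q}K(x,y)^{\delta q}\,\mathrm{d}\nu(x)\right)^{1/q}\mathrm{d}\mu(y).$$
The inner $L^q$ norm is controlled precisely by the second hypothesis, $\int_X h_2(x)^{q}K(x,y)^{\delta q}\,\mathrm{d}\nu(x)\le C_2h_1(y)^{q}$, which produces the factor $C_2^{1/q}h_1(y)$. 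The weight $h_1(y)$ then cancels exactly against the $1/h_1(y)$, leaving
$$\|Tf\|_{L^q(\mathrm{d}\nu)}\le C_1C_2^{1/q}\int_X f(y)\,\mathrm{d}\mu(y)=C_1C_2^{1/q}\|f\|_{L^1(\mathrm{d}\mu)},$$
which is at once the asserted boundedness and the asserted norm bound $C_1C_2^{1/q}$.

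The only genuinely delicate point is the Minkowski step: one must keep straight that the integration variable $y$ is paired with $\mathrm{d}\mu$ while the output variable $x$ is paired with $\mathrm{d}\nu$ (exactly as in Lemma \ref{lem:okikiolu}), so that the generalized Minkowski inequality is applied to the correct measure, and one must confirm that the nonnegativity of $K,h_1,h_2,f$ makes the implicit interchange of integrations (Tonelli) unconditionally valid. Converting the essential-supremum hypothesis into an almost-everywhere pointwise inequality also requires a routine measurability remark, but this presents no real obstacle.
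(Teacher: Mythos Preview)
Your proof is correct. Note, however, that the paper does not actually prove this lemma: it merely quotes the result from \cite{Zhao}. Your argument is the natural $p=1$ endpoint of the proof of Lemma~\ref{lem:okikiolu} given just above it in the paper --- the H\"older step with conjugate exponent $p'=\infty$ degenerates into the pointwise bound $K(x,y)^{\gamma}\le C_1 h_2(x)/h_1(y)$ extracted from the essential-supremum hypothesis, after which Minkowski's integral inequality and the second hypothesis finish exactly as you wrote. This is the standard proof (and is essentially what appears in \cite{Zhao}), so there is nothing substantive to compare. One small remark: the lemma as stated in the paper has a typo (it writes $L^1(X,\mathrm{d}\nu)$ for the domain, whereas the operator and your proof naturally yield boundedness from $L^1(X,\mathrm{d}\mu)$); you have silently used the correct version.
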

The above lemma has a very simple formulation when $p=q=1$, and the exact norm of the operator is also obtained (see \cite{Marko}).
\begin{lemma}\label{lem:okikiolulimitcasesharp}
Let $\mu$ be a positive measure on the space $X$ and let $K(x, y)$ be non-negative measurable functions on $X\times X$. Let $T$ be the integral operator with kernel $K(x,y)$ defined by $$Tf(x) = \int_X\!\! f(y)K(x, y)\mathrm{d}\mu(y).$$ Then $T$ is bounded on $L_\mu^1(X):=L^1(X,d\mu)$ if and only if  $$\sup_{y\in X}\int_X\,K(x,y)\mathrm{d}\mu(x)<\infty.$$ 
In this case, $$\|T\|_{L_\mu^1\rightarrow L_\mu^1}=\sup_{y\in X}\int_X\,K(x,y)\mathrm{d}\mu(x).$$
\end{lemma}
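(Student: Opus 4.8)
The plan is to prove the two implications separately, extracting the sharp constant along the way. Throughout, set $g(y):=\int_X K(x,y)\,d\mu(x)$ and $M:=\sup_{y\in X}g(y)$.

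First I would establish the sufficiency together with the upper bound for the norm. Assume $M<\infty$. Since $K\ge 0$, for $f\in L_\mu^1(X)$ we have $|Tf(x)|\le\int_X|f(y)|K(x,y)\,d\mu(y)$, and Tonelli's theorem permits interchanging the order of integration:
\[
\|Tf\|_{L_\mu^1}\le\int_X\int_X|f(y)|K(x,y)\,d\mu(y)\,d\mu(x)=\int_X|f(y)|\,g(y)\,d\mu(y)\le M\,\|f\|_{L_\mu^1}.
\]
Hence $T$ is bounded on $L_\mu^1(X)$ with $\|T\|_{L_\mu^1\to L_\mu^1}\le M$.

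Next I would prove the necessity and the matching lower bound by testing $T$ on indicator functions. For any measurable $E\subset X$ with $0<\mu(E)<\infty$, take $f=\mathbf 1_E$, so $\|f\|_{L_\mu^1}=\mu(E)$; because $K\ge 0$ there is no cancellation, so $T\mathbf 1_E\ge 0$ and another application of Tonelli gives
\[
\|T\mathbf 1_E\|_{L_\mu^1}=\int_X\int_E K(x,y)\,d\mu(y)\,d\mu(x)=\int_E g(y)\,d\mu(y).
\]
Consequently $\frac1{\mu(E)}\int_E g\,d\mu\le\|T\|_{L_\mu^1\to L_\mu^1}$ whenever $T$ is bounded. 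Letting $E$ range over subsets of finite positive measure of a set where $g$ would exceed $\|T\|_{L_\mu^1\to L_\mu^1}$ shows that no such set can have positive measure, whence $M\le\|T\|_{L_\mu^1\to L_\mu^1}$; in particular $M<\infty$. Combined with the first step this yields $\|T\|_{L_\mu^1\to L_\mu^1}=M$. Conversely, if $M=\infty$, then for every $n$ there is an admissible $E_n$ with $g>n$ on $E_n$, so $\|T\mathbf 1_{E_n}\|_{L_\mu^1}\ge n\,\mu(E_n)=n\|\mathbf 1_{E_n}\|_{L_\mu^1}$ and $T$ is unbounded.

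The only delicate point is the passage from the averaged inequality $\frac1{\mu(E)}\int_E g\,d\mu\le\|T\|_{L_\mu^1\to L_\mu^1}$ to the pointwise (a.e.) bound on $g$, i.e.\ the identification of $M$ with the essential supremum of $g$ and the production of finite-positive-measure test sets. This is a routine measure-theoretic argument under $\sigma$-finiteness of $\mu$, which holds in all the applications we make of this lemma (where $\mu$ is a weighted Lebesgue measure), and I would only sketch it.
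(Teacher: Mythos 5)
Your argument is correct, but note that the paper never proves this lemma at all: it is quoted as a known result with a citation to Markovi\'c \cite{Marko}, so there is no internal proof to compare against. Your proof is the standard one and is exactly what that reference does in spirit: Tonelli plus $K\ge 0$ gives $\|Tf\|_{L^1_\mu}\le \int |f|\,g\,d\mu \le M\|f\\|_{L^1_\mu}$ with $g(y)=\int_X K(x,y)\,d\mu(x)$, and testing on indicators $\mathbf 1_E$ gives the reverse bound, so the sharp constant drops out. The one point worth making explicit is the sup versus essential sup issue you flag at the end: the indicator test only controls averages of $g$, hence only yields $\mathrm{ess\,sup}_\mu\, g\le\|T\|$, and in full generality the operator norm \emph{is} the essential supremum, so the lemma as literally stated (with a pointwise supremum, and with the ``only if'' direction) can fail for kernels modified on a $\mu$-null set; likewise your construction of the sets $E_n$ when $M=\infty$ needs them to have positive measure, which a pointwise supremum alone does not guarantee. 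This is harmless here because in every application in the paper ($H_{\alpha,\beta,\gamma}$ on $L^1_a((0,\infty))$ and $T^+_{\alpha,\beta,\gamma}$ on $L^1_a(\mathbb{R}^2_+)$) the measure is a $\sigma$-finite weighted Lebesgue measure with full support and $g$ is continuous (indeed constant, by Lemma \ref{lem:betafunctionconditions} and Lemma \ref{lem:integkernel} when $\gamma=\alpha+\beta+1$), so supremum and essential supremum coincide; but since you are proving the lemma in the abstract measure-space formulation, you should either state it with an essential supremum or add the regularity hypothesis, rather than leaving it as a sketched ``routine'' step.
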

The dual version of the last lemma is the following.
\begin{lemma}\label{lem:okikiolulimitcaseinfty}
Let $\mu$ be a positive measure on the space $X$ and let $K(x, y)$ be non-negative measurable functions on $X\times X$. Let $T$ be the integral operator with kernel $K(x,y)$ defined by $$Tf(x) = \int_X\!\! f(y)K(x, y)\mathrm{d}\mu(y).$$ Then $T$ is bounded on $L^\infty (X)$ if and only if  $$\sup_{x\in X}\int_X\,K(x,y)\mathrm{d}\mu(y)<\infty.$$ 
In this case, $$\|T\|_{L^\infty \rightarrow L^\infty}=\sup_{x\in X}\int_X\,K(x,y)\mathrm{d}\mu(y).$$
\end{lemma}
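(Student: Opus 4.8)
The plan is to follow the same two‑sided scheme used for Lemma~\ref{lem:okikiolulimitcasesharp}, exploiting the non‑negativity of the kernel and testing against the constant function. Throughout, write $M:=\sup_{x\in X}\int_X K(x,y)\,\mathrm{d}\mu(y)$, where the supremum is understood as an essential supremum with respect to $\mu$ (when $K$ is, say, continuous, this agrees with the genuine supremum).

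For the sufficiency and the estimate $\|T\|_{L^\infty\to L^\infty}\le M$: first observe that $x\mapsto\int_X K(x,y)\,\mathrm{d}\mu(y)$ is measurable by Tonelli's theorem, since $K$ is non‑negative and measurable on $X\times X$. Then, for $f\in L^\infty(X)$ and for $\mu$‑a.e.\ $x$,
$$|Tf(x)|\le\int_X|f(y)|K(x,y)\,\mathrm{d}\mu(y)\le\|f\|_\infty\int_X K(x,y)\,\mathrm{d}\mu(y)\le M\|f\|_\infty,$$
so that $\|Tf\|_\infty\le M\|f\|_\infty$. In particular, if $M<\infty$ then $T$ is bounded on $L^\infty(X)$.

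For the reverse inequality and the necessity, I would test with the constant function $f\equiv 1$, which belongs to $L^\infty(X)$ with $\|f\|_\infty=1$. Since $K\ge 0$, we have $Tf(x)=\int_X K(x,y)\,\mathrm{d}\mu(y)$ for a.e.\ $x$, hence $\|Tf\|_\infty=M$. Thus, if $T$ is bounded on $L^\infty(X)$, then $M=\|Tf\|_\infty\le\|T\|_{L^\infty\to L^\infty}\|f\|_\infty=\|T\|_{L^\infty\to L^\infty}<\infty$; together with the previous paragraph this yields $\|T\|_{L^\infty\to L^\infty}=M$ and completes the proof. (Equivalently, the first display shows $|Tf(x)|\le \|f\|_\infty\,(T\mathbf 1)(x)$ pointwise a.e., so the constant function is extremal.)

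There is no genuine analytic obstacle here; the only points requiring care are bookkeeping ones: invoking Tonelli to guarantee measurability of $x\mapsto\int_X K(x,y)\,\mathrm{d}\mu(y)$, and reading the ``$\sup$'' in the statement as an essential supremum with respect to $\mu$ (so that $\|T\mathbf 1\|_\infty$ is exactly that quantity). For the applications in this paper the relevant kernels are continuous, so the essential supremum and the pointwise supremum coincide and no further comment is needed.
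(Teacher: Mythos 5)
Your argument is correct and complete: the upper bound $\|Tf\|_\infty\le M\|f\|_\infty$ is immediate from non-negativity of $K$, and testing with $f\equiv 1$ gives $\|T\mathbf 1\|_\infty=M$ (read as an essential supremum), so the norm is exactly $M$. The paper itself offers no written proof of this lemma; it simply declares it the dual version of Lemma~\ref{lem:okikiolulimitcasesharp}, whose $L^1$ statement is quoted from Markovi\'c. So your route is genuinely different in spirit: rather than passing through the duality $(L^1(\mu))^*=L^\infty(\mu)$ and the $L^1$ result for the transposed kernel, you prove the $L^\infty$ statement directly. This buys two things: it is self-contained (no appeal to the cited $L^1$ lemma), and it sidesteps the mild duality subtleties one would otherwise have to address (identifying $T$ on $L^\infty$ as the adjoint of the transposed-kernel operator on $L^1$, and arguing the converse direction of that identification for merely measurable positive kernels, which again comes down to Tonelli). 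Your bookkeeping remarks (Tonelli for measurability of $x\mapsto\int_X K(x,y)\,\mathrm{d}\mu(y)$, and the essential-supremum reading of the statement, which is harmless for the continuous kernels used later in the paper) are exactly the right caveats; nothing is missing.
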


\section{Boundedness of a family of Hilbert-type Operators.}
In this section, we prove some necessary and sufficient conditions for the boundedness of the operator $H_{\alpha, \beta,\gamma}$ from $L^p((0,\infty), y^a\mathrm{d}y)$ to $L^q((0,\infty), y^b\mathrm{d}y)$. We recall that the operator $H_{\alpha, \beta,\gamma}$ is defined as $$H_{\alpha,\beta,\gamma}f(x) = x^{\alpha}\int_0^{\infty}\!\!\frac{f(y)}{(x+y)^{\gamma}}y^{\beta}\mathrm{d}y.$$

\subsection{Necessity for Boundedness of $H_{\alpha,\beta, \gamma}$}
Let us start by the following lemma.
\begin{lemma}\label{lem:necessdirect}
Let $1\le p\le q<\infty$ and $a,b>-1.$ Assume $H_{\alpha,\beta,\gamma}$ is bounded from $ L^p((0,\infty), y^a\mathrm{d}y)$ to $ L^q((0,\infty), y^b\mathrm{d}y)$. Then the parameters satisfy   $$\gamma=\alpha+\beta+1+\left(\frac{b+1}{q} - \frac{a+1}{p}\right)$$
and 
$$-q\alpha<b+1<q(\gamma-\alpha).$$
\end{lemma}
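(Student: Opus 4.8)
The plan is to extract the two necessary conditions by testing the boundedness of $H_{\a,\ba,\ga}$ against well-chosen families of functions, exploiting the scaling (homogeneity) structure of the operator and, for the sharp inequality, a pair of truncated power functions. First I would establish the equality \eqref{eq:relationalphabetagamma1} by a dilation argument: for $\lambda>0$ and $f_\lambda(y):=f(\lambda y)$ one computes directly from \eqref{eq:hilbertgenedef} that $H_{\a,\ba,\ga}(f_\lambda)(x)=\lambda^{\ga-\ba-1-\a}\bigl(H_{\a,\ba,\ga}f\bigr)(\lambda x)$, while $\|f_\lambda\|_{p,a}=\lambda^{-\frac{a+1}{p}}\|f\|_{p,a}$ and $\|g(\lambda\,\cdot)\|_{q,b}=\lambda^{-\frac{b+1}{q}}\|g\|_{q,b}$. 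Plugging these into the boundedness inequality $\|H_{\a,\ba,\ga}f_\lambda\|_{q,b}\le C\|f_\lambda\|_{p,a}$ and letting $\lambda\to 0$ and $\lambda\to\infty$ forces the exponent of $\lambda$ on both sides to match, which is exactly $\ga-\ba-1-\a-\frac{b+1}{q}=-\frac{a+1}{p}$, i.e. \eqref{eq:relationalphabetagamma1}. (One must first check that there is at least one admissible nonzero $f$ for which $H_{\a,\ba,\ga}f\not\equiv 0$; a nonnegative compactly supported bump does the job since the kernel is strictly positive.)

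Next I would derive the two strict inequalities $-q\a<b+1$ and $b+1<q(\ga-\a)$. For the right-hand inequality, test with $f_s(y)=y^{-s}\mathbf 1_{(0,1)}(y)$. This lies in $L_a^p$ precisely when $sp<a+1$, and for $x$ large the integral $\int_0^1 \frac{y^{\ba-s}}{(x+y)^\ga}\,dy\simeq x^{-\ga}\int_0^1 y^{\ba-s}\,dy$ behaves like a constant times $x^{-\ga}$ provided $\ba-s>-1$; hence $H_{\a,\ba,\ga}f_s(x)\simeq x^{\a-\ga}$ near infinity, and membership in $L_b^q$ near infinity requires $(\a-\ga)q+b<-1$, that is $b+1<q(\ga-\a)$. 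Pushing $s\uparrow \frac{a+1}{p}$ gives the sharp constraint in the form stated; combined with the equality \eqref{eq:relationalphabetagamma1} one recovers \eqref{eq:condineq11}. Symmetrically, for the left-hand inequality, test with $g_s(y)=y^{-s}\mathbf 1_{(1,\infty)}(y)$, which lies in $L_a^p$ when $sp>a+1$; for $x$ small one has $H_{\a,\ba,\ga}g_s(x)\gtrsim x^\a\int_1^\infty \frac{y^{\ba-s}}{(x+y)^\ga}\,dy\simeq x^\a$ (a finite nonzero constant, using $\ga-\ba+s>1$), so integrability against $x^b\,dx$ near $0$ forces $\a q+b>-1$, i.e. $-q\a<b+1$. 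Letting $s\downarrow\frac{a+1}{p}$ gives the claimed form.

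The main obstacle, and the step deserving the most care, is the bookkeeping that shows these "near $0$'' and "near $\infty$'' obstructions are not vacuous — i.e. that one can genuinely choose $s$ close to the critical value $\frac{a+1}{p}$ while simultaneously keeping the auxiliary convergence conditions ($\ba-s>-1$ on one side, $\ga-\ba+s>1$ on the other) satisfied, and that the asymptotic estimates for $H_{\a,\ba,\ga}f_s$ hold uniformly enough to conclude non-membership in the target space. Here one uses that, by the already-proved equality \eqref{eq:relationalphabetagamma1}, the auxiliary conditions translate into open conditions that are automatically compatible with $s$ near $\frac{a+1}{p}$ unless the desired inequality already fails — so the argument is really a limiting one and the strictness of the inequalities comes out for free. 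A clean way to organize this is to split each test integral at $x=1$ and estimate the relevant tail by Lemma~\ref{lem:betafunctionconditions}, which gives exact asymptotics rather than mere bounds; I would also remark that $1\le p\le q$ is not actually needed for this necessity direction, only $a,b>-1$ and boundedness, but I would keep the hypothesis as stated for uniformity with the rest of the section.
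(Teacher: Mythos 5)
Your proposal is correct and follows essentially the same route as the paper: the dilation argument for the exponent identity is identical (the paper writes $f_R(x)=f(Rx)$ and derives $H_{\alpha,\beta,\gamma}f_R(x)=R^{\gamma-\beta-\alpha-1}H_{\alpha,\beta,\gamma}f(Rx)$ exactly as you do), and the inequalities are likewise obtained by testing. The only real difference is in the choice of test functions: the paper uses the single function $\chi_{[1,2]}$, for which $H_{\alpha,\beta,\gamma}\chi_{[1,2]}(x)\simeq x^{\alpha}(1+x)^{-\gamma}$, and reads off \emph{both} strict inequalities at once from the convergence of $\int_0^\infty x^{q\alpha+b}(1+x)^{-q\gamma}\,\mathrm{d}x$ (behaviour at $0$ gives $-q\alpha<b+1$, behaviour at $\infty$ gives $b+1<q(\gamma-\alpha)$, via Lemma~\ref{lem:betafunctionconditions}), whereas you use two one-parameter families supported near $0$ and near $\infty$. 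Your version works, but note that your conclusions $-q\alpha<b+1$ and $b+1<q(\gamma-\alpha)$ do not depend on $s$ at all, so the limiting passages $s\uparrow\frac{a+1}{p}$ and $s\downarrow\frac{a+1}{p}$ that you present as the delicate step are unnecessary: any single admissible $s$ (e.g.\ $s$ very negative for the family on $(0,1)$, $s$ very large for the family on $(1,\infty)$) already yields the strict inequalities, and the auxiliary convergence conditions are then automatic. Your side remark that $p\le q$ is not used in this necessity direction is accurate.
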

\begin{proof}
Let $R>0$ and define $f_R(x):=f(Rx).$ Then it is easy to see that if $f\in  L^p((0,\infty), y^a\mathrm{d}y)$, then $f_R\in  L^p((0,\infty), y^a\mathrm{d}y)$ and $\|f_R\|^p_{p,a}=R^{-a-1}\|f\|^p_{p,a}.$ From the definition of $H_{\alpha,\beta,\gamma}$ and some easy change of variables, we obtain
\begin{eqnarray*}
H_{\alpha,\beta,\gamma}f_R(x) &= & x^{\alpha}\int_0^{\infty}\!\!\frac{f_R(y)}{(x+y)^{\gamma}}y^{\beta}\mathrm{d}y\\
& = & x^{\alpha}\int_0^{\infty}\!\!\frac{f(Ry)}{(x+y)^{\gamma}}y^{\beta}\mathrm{d}y\\
& = & R^{\gamma-\beta-1}x^{\alpha}\int_0^{\infty}\!\!\frac{f(u)}{(Rx+u)^{\gamma}}u^{\beta}\mathrm{d}u,\quad\mbox{(letting $u=Ry$)}\\
& = & R^{\gamma-\beta-\alpha-1}(Rx)^{\alpha}\int_0^{\infty}\!\!\frac{f(u)}{(Rx+u)^{\gamma}}u^{\beta}\mathrm{d}u\\
& = & R^{\gamma-\beta-\alpha-1}H_{\alpha,\beta,\gamma}f(Rx).
\end{eqnarray*}
Therefore,
\begin{eqnarray*}
\|H_{\alpha,\beta,\gamma}f_R\|^q_{q,b} & = & R^{q(\gamma-\alpha-\beta-1)}\int_0^{\infty}\!\!|H_{\alpha,\beta,\gamma}f(Rx)|^qx^b\mathrm{d}x\\
& = & R^{q(\gamma-\alpha-\beta-1)-b-1}\int_0^{\infty}\!\!|H_{\alpha,\beta,\gamma}f(u)|^qu^b\mathrm{d}u\quad\mbox{(putting $u=Rx$)}\\
& = & R^{q(\gamma-\alpha-\beta-1)-b-1}\|H_{\alpha,\beta,\gamma}f\|^q_{q,b}.
\end{eqnarray*}
Now, that there exists a constant $C>0$ such that for any $f\in L_a^p((0,\infty))$, $\|H_{\alpha,\beta,\gamma}f_R\|_{q,b}\le C\|f_R\|_{p,a}$ is equivalent to $$R^{\gamma-\alpha-\beta-1-\frac{b+1}{q}}\|H_{\alpha,\beta,\gamma}f\|_{q,b}\le CR^{-\frac{a+1}{p}}\|f\|_{p,a}$$ which is the same as $$R^{\gamma-\alpha-\beta-1-\frac{b+1}{q}+\frac{a+1}{p}}\|H_{\alpha,\beta,\gamma}f\|_{q,b}\le C\|f\|_{p,a}$$ for any $f\in L_a^p((0,\infty))$. That the latter holds for any $f\in L_a^p((0,\infty))$ and any $R>0$ is only possible if $\gamma-\alpha-\beta-1-\frac{b+1}{q}+\frac{a+1}{p}=0.$

To check the other condition, we set $f(x)=\chi_{[1,2]}(x)$. Then one easily obtains $$H_{\alpha,\beta,\gamma}f(x)=x^\alpha \int_0^\infty \frac{f(y)}{(x+y)^\gamma}y^\beta \mathrm{d}y \simeq \frac{x^\alpha}{(1+x)^\gamma}.$$
It follows from our assumption on the operator $H_{\alpha,\beta,\gamma}$ that
$$\int_0^\infty \frac{x^{q\alpha}}{(1+x)^{q\gamma}} x^b\mathrm{d}x \simeq \|H_{\alpha,\beta,\gamma}f\|_{q,b}^q \lesssim \|f\|_{p,a}^q\simeq 1.$$
It follows from Lemma \ref{lem:betafunctionconditions} that we should have
$\alpha q+b+1>0$ and $\gamma q-\alpha q-b-1>0$; that is 
$-q\alpha <b+1<q(\gamma-\alpha)$. The proof is complete.
\end{proof}
We also have the following.
\begin{lemma}\label{lem:necessadjoint}
Let $1< p\le q<\infty$ and $a,b>-1.$ Assume $H_{\alpha,\beta,\gamma}$ is bounded from $ L^p((0,\infty), y^a\mathrm{d}y)$ to $ L^q((0,\infty), y^b\mathrm{d}y)$. Then the parameters satisfy   $$\gamma=\alpha+\beta+1+\left(\frac{b+1}{q} - \frac{a+1}{p}\right)$$
and 
$$-p(\gamma-\beta-1)<a+1<p(\beta+1).$$
\end{lemma}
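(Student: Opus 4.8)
\textbf{Proof proposal for Lemma \ref{lem:necessadjoint}.}

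The plan is to obtain the first identity for free from the previous lemma, and then to derive the two-sided inequality on $a+1$ by a duality argument, testing the adjoint operator on the same family of dilations and on a suitable characteristic function. First I would note that Lemma \ref{lem:necessdirect} already gives, under the hypothesis that $H_{\a,\ba,\ga}$ is bounded from $L_a^p$ to $L_b^q$, the scaling identity $\gamma=\alpha+\beta+1+\left(\frac{b+1}{q}-\frac{a+1}{p}\right)$; so only the inequality $-p(\gamma-\beta-1)<a+1<p(\beta+1)$ remains to be proved, and here we may use $p>1$.

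The key idea is to pass to the adjoint. For $1<p\le q<\infty$, boundedness of $H_{\a,\ba,\ga}\colon L_a^p((0,\infty))\to L_b^q((0,\infty))$ implies boundedness of the formal transpose $H_{\a,\ba,\ga}^{*}\colon L_{b'}^{q'}((0,\infty))\to L_{a'}^{p'}((0,\infty))$, where the primes denote the conjugate exponents together with the corresponding dual weights; an elementary computation with the kernel $x^\alpha y^\beta (x+y)^{-\gamma}$ shows that this transpose is again an operator of the same type, namely (up to relabeling) $H_{\beta',\alpha',\gamma}$ acting between the dual spaces, since the kernel is symmetric in $x,y$ after exchanging the roles of $\alpha$ and $\beta$. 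Then I would apply the two necessary conditions already established in Lemma \ref{lem:necessdirect} — both the scaling identity and the inequality $-q'\beta'<\cdots$ — to this transposed operator. Translating the resulting inequalities back through the conjugate-exponent and dual-weight bookkeeping converts the constraint ``$-q\alpha<b+1<q(\gamma-\alpha)$ for the adjoint'' into precisely ``$-p(\gamma-\beta-1)<a+1<p(\beta+1)$'' for the original operator, which is the claim. Alternatively, and perhaps more transparently, one can argue directly: test $H_{\a,\ba,\ga}$ against functions of the form $f(x)=x^{-s}\chi_{[1,\infty)}(x)$ (or $\chi_{(0,1]}$) and examine for which $s$ such $f$ lies in $L_a^p$ while $H_{\a,\ba,\ga}f$ fails to lie in $L_b^q$ unless the stated inequalities hold; the endpoint values of $s$ allowed by $f\in L_a^p$ force, via Lemma \ref{lem:betafunctionconditions} applied to the resulting Beta-type integrals, exactly the strict inequalities $a+1<p(\beta+1)$ and $a+1>-p(\gamma-\beta-1)$.

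The main obstacle is the careful bookkeeping in the duality step: one must correctly identify the weight that appears when dualizing $L^p\big((0,\infty),y^a\,dy\big)$ — the dual pairing with respect to Lebesgue measure turns the weight $y^a$ into the weight $y^{-a p'/p}=y^{a(1-p')}$ on the dual side — and then verify that the transposed kernel is genuinely of the form treated in Lemma \ref{lem:necessdirect}, with $\alpha$ and $\beta$ interchanged and $\gamma$ unchanged. Once the dictionary $\big(p,q,a,b,\alpha,\beta,\gamma\big)\mapsto\big(q',p',b(1-q'),a(1-p'),\beta,\alpha,\gamma\big)$ is set up correctly, substituting into ``$-q'\beta<\,(\text{dual }b)+1\,<q'(\gamma-\beta)$'' and simplifying using the already-known identity for $\gamma$ yields the asserted inequalities after a short algebraic manipulation. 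The case $p=1$ is excluded precisely because the duality $L^1\to L^\infty$ is not reflexive in the required way, which is why the hypothesis is $1<p\le q<\infty$ here rather than $1\le p\le q<\infty$ as in Lemma \ref{lem:necessdirect}.
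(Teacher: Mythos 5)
Your proposal follows essentially the same route as the paper: the scaling identity is inherited from Lemma \ref{lem:necessdirect}, and the inequality is obtained by passing to the adjoint, which is again an operator of the same family (with $\alpha$ and $\beta$ exchanged, up to the weight shifts) bounded between the dual spaces, and then importing the necessary condition of Lemma \ref{lem:necessdirect} for that adjoint — the paper does exactly this, computing the adjoint with respect to the weighted pairing as $H^*f(y)=y^{\beta-a}\int_0^\infty f(x)x^{\alpha+b}(x+y)^{-\gamma}\,dx$ and testing it against $\chi_{[1,2]}$. One small correction to your bookkeeping: since your dictionary sends the target exponent $q$ to $p'$, the condition to substitute into is $-p'\beta<a(1-p')+1<p'(\gamma-\beta)$ (not $-q'\beta<\cdots<q'(\gamma-\beta)$); with $p'$ in place the algebra does reduce to $-p(\gamma-\beta-1)<a+1<p(\beta+1)$.
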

\begin{proof}
The necessity of the relation $\gamma=\alpha+\beta+1+\left(\frac{b+1}{q} - \frac{a+1}{p}\right)$ is already proved in the previous lemma.
To check the other condition, we note that as $H_{\alpha,\beta,\gamma}$ is bounded from $L_a^p$ to $L_b^q$, its adjoint $H_{\alpha,\beta,\gamma}^*$ is also bounded from $L_b^{q'}$ to $L_a^{p'}$, $\frac{1}{p}+\frac{1}{p'}=1=\frac{1}{q}+\frac{1}{q'}$. One easily obtain that
$$H_{\alpha,\beta,\gamma}^*f(y)=y^{\beta-a}\int_0^\infty \frac{f(x)}{(x+y)^\gamma}x^{\alpha+b} \mathrm{d}x.$$

Let us take again $f(y)=\chi_{[1,2]}(y)$. Then one easily obtain that $$H_{\alpha,\beta,\gamma}^*f(y) \simeq \frac{y^{\beta-a}}{(1+y)^\gamma}.$$
It follows from our assumption on the operator $H_{\alpha,\beta,\gamma}^*$ that
$$\int_0^\infty \frac{y^{p'(\beta-a)+a}}{(1+y)^{p'\gamma}} \mathrm{d}y \simeq \|H_{\alpha,\beta,\gamma}^*f\|_{p',a}^{p'} \lesssim \|f\|_{q',b}^{p'}\simeq 1.$$
It follows from Lemma \ref{lem:betafunctionconditions} that we should have
$p'(\beta-a)+a+1>0$ and $\gamma p'-p'(\beta-a)-a-1>0$; that is 
$-p'(\beta-a)<a+1<p'(\gamma-\beta+a)$ or equivalently $-p(\gamma-\beta-1)<a+1<p(\beta+1)$. The proof is complete.
\end{proof}
To complete this part, let us observe the following. 
\begin{lemma}\label{lem:equivcond}
Let $1\le p\le q<\infty$ and $a,b>-1.$ Assume that there are real numbers $\alpha, \beta$ and $\gamma$ such that $$\gamma=\alpha+\beta+1+\left(\frac{b+1}{q} - \frac{a+1}{p}\right).$$
Then the condition
$$-q\alpha<b+1<q(\gamma-\alpha)$$
is equivalent to
$$-p(\gamma-\beta-1)<a+1<p(\beta+1).$$
\end{lemma}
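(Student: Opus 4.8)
The plan is purely algebraic: substitute the defining relation between the parameters into each inequality pair and check that both pairs reduce to the same two statements. No analytic input is required. The starting point is to rewrite
\begin{equation*}
\gamma-\alpha-\beta-1=\frac{b+1}{q}-\frac{a+1}{p}
\end{equation*}
in the two convenient forms $\gamma-\beta-1=\alpha+\frac{b+1}{q}-\frac{a+1}{p}$ and $\gamma-\alpha=\beta+1+\frac{b+1}{q}-\frac{a+1}{p}$.

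First I would match the lower bounds. Since $q\ge 1>0$, the inequality $-q\alpha<b+1$ is equivalent to $\alpha+\frac{b+1}{q}>0$. Using the first rewriting, $\alpha+\frac{b+1}{q}=(\gamma-\beta-1)+\frac{a+1}{p}$, so this becomes $(\gamma-\beta-1)+\frac{a+1}{p}>0$, i.e.\ (multiplying by $p>0$) $a+1>-p(\gamma-\beta-1)$. Hence the two lower bounds coincide.

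Next I would match the upper bounds. Again because $q>0$, the inequality $b+1<q(\gamma-\alpha)$ is equivalent to $\frac{b+1}{q}<\gamma-\alpha$; substituting the second rewriting gives $\frac{b+1}{q}<\beta+1+\frac{b+1}{q}-\frac{a+1}{p}$, which simplifies to $\frac{a+1}{p}<\beta+1$, i.e.\ $a+1<p(\beta+1)$. Combining the two equivalences yields the claimed equivalence of the two-sided conditions.

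There is no real obstacle here; the only points requiring care are keeping the direction of the inequalities correct when clearing the positive denominators $p$ and $q$ (legitimate since $p\ge 1$ and $q\ge 1$), and performing the rearrangement of the parameter relation consistently.
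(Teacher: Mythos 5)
Your proof is correct: both rearrangements of the parameter relation are right, the lower bounds $-q\alpha<b+1$ and $-p(\gamma-\beta-1)<a+1$ are shown equivalent, and likewise the upper bounds, with all sign manipulations valid since $p,q\ge 1>0$. The paper states this lemma as an observation without proof, and your direct algebraic substitution is exactly the routine verification intended.
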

\subsection{Sufficiency for Boundedness of $H_{\alpha,\beta, \gamma}$}
We start with the case $p>1$. We have the following lemma.
\begin{lemma}\label{lem:suffnotlimit}
Let $1<p\le q<\infty,$ $\alpha,\beta,\gamma \in\mathbb{R};$ and $a,b>-1.$ Assume that $$\gamma = \alpha+\beta+1-\frac{a+1}{p} + \frac{b+1}{q}$$ and that $$-p(\gamma-\beta-1)<a+1<p(\beta + 1).$$ Then the operator $H_{\alpha, \beta,\gamma}$ is bounded from $ L^p((0,\infty), y^a\mathrm{d}y)$ to $ L^q((0,\infty), y^b\mathrm{d}y).$
\end{lemma}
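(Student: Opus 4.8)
The plan is to apply the Okikiolu–Schur test (Lemma \ref{lem:okikiolu}) to the kernel
$$K(x,y)=\frac{x^\alpha y^\beta}{(x+y)^\gamma},$$
with $X=Y=(0,\infty)$, $\mathrm{d}\mu(y)=\mathrm{d}y$ on the source side and $\mathrm{d}\nu(x)=x^{b-q\alpha}\mathrm{d}x$ on the target side, so that boundedness of $H_{\alpha,\beta,\gamma}$ from $L^p_a$ to $L^q_b$ is converted (after absorbing the weights into the kernel) into boundedness of the operator with kernel $K$ from $L^p((0,\infty),y^a\mathrm{d}y)$ to $L^q((0,\infty),x^b\mathrm{d}x)$. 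Concretely I would write $g(y)=f(y)y^{a/p}$, so that $\|f\|_{p,a}=\|g\|_{L^p}$, and rephrase everything as boundedness of $\widetilde{H}g(x)=x^{b/q}\int_0^\infty \widetilde{K}(x,y)g(y)\,\mathrm{d}y$ from $L^p$ to $L^q$ where $\widetilde K(x,y)=x^\alpha y^{\beta-a/p}(x+y)^{-\gamma}$; equivalently test the kernel $x^{b/q}\widetilde K(x,y)$ against Lebesgue measure on both sides. The power-function structure of $K$ makes the two Schur integrals in \eqref{ee1} and \eqref{ee2} computable in closed form via Lemma \ref{lem:betafunctionconditions}.

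The core of the argument is the choice of the auxiliary functions and the splitting exponent. Because $K$ is homogeneous of degree $\alpha+\beta-\gamma$, the natural ansatz is $\phi_1(y)=y^{s}$ and $\phi_2(x)=x^{\sigma}$ for exponents $s,\sigma$ to be determined, together with a splitting parameter $t\in(0,1]$ with $\tfrac1p+\tfrac1q$-type constraints coming from the lemma (recall $1<p\le q$, $\tfrac1p+\tfrac1q$... precisely $\tfrac1p+\tfrac1{q'}=1$ in the lemma's notation with $r=q$). In inequality \eqref{ee1} one integrates $|K(x,y)|^{tq'}\phi_1(y)^{q'}$ in $y$; by Lemma \ref{lem:betafunctionconditions} this converges and equals a constant times a power of $x$ precisely when two linear inequalities in the exponents hold, and it then matches $\phi_2(x)^{q'}$ automatically by homogeneity. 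Symmetrically, \eqref{ee2} requires integrating $|K(x,y)|^{(1-t)q}\phi_2(x)^{q}x^{(b-q\alpha)\cdot(\ldots)}$ in $x$ — here one must be careful to carry the target weight correctly — and again Lemma \ref{lem:betafunctionconditions} gives convergence iff two further linear inequalities hold, with the output power of $y$ matching $\phi_1(y)^q$ by scaling. The identity $\gamma=\alpha+\beta+1-\tfrac{a+1}{p}+\tfrac{b+1}{q}$ is exactly what makes the two "matching" conditions consistent (it is the homogeneity/scaling balance, as Lemma \ref{lem:necessdirect} already showed is forced), so it will be used to eliminate one exponent.

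The step I expect to be the real obstacle is verifying that the hypothesis $-p(\gamma-\beta-1)<a+1<p(\beta+1)$ is exactly equivalent to the simultaneous solvability, for some admissible $t\in(0,1]$ and some $s,\sigma$, of the four strict linear inequalities produced by the two applications of Lemma \ref{lem:betafunctionconditions}. I would handle this by first solving the "matching of powers" equalities to express $\sigma$ and $s$ in terms of $t$ (and the parameters), then substituting into the four convergence inequalities; after using the relation \eqref{eq:relationalphabetagamma1} these should collapse to: (i) an upper and lower bound on $t$ depending only on $p,q$, which is nonempty because $p\le q$; and (ii) precisely the condition $-p(\gamma-\beta-1)<a+1<p(\beta+1)$ on the parameters, independent of $t$. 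A clean way to see (i) is to note that when $p=q$ one may simply take $t=1/q'=1/p'$, recovering the classical Schur test, and that the interval of admissible $t$ is obtained by perturbing around this value; for $p<q$ one checks the endpoints of the $t$-interval give the two sides of the parameter inequality. Once this bookkeeping is done, Lemma \ref{lem:okikiolu} yields the boundedness with the quantitative bound $\|H_{\alpha,\beta,\gamma}\|_{L^p_a\to L^q_b}\le M_1M_2$, completing the proof.
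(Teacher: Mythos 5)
Your plan is essentially the paper's proof: the paper likewise applies Okikiolu's test (Lemma \ref{lem:okikiolu}) to the kernel $K(x,y)=x^{\alpha}y^{\beta-a}(x+y)^{-\gamma}$ (absorbing the weight $y^a$ into the measure rather than into the function, a cosmetic difference from your substitution $g=fy^{a/p}$), with power test functions $x^{-r}$, $y^{-s}$ and Lemma \ref{lem:betafunctionconditions}, and the two hypothesized parameter inequalities are indeed exactly the nonemptiness of the two intervals of admissible exponents produced by the four convergence conditions, as you predict. The one place your bookkeeping forecast is slightly off is the role of $t$: it is not a free parameter ranging over an interval depending only on $p,q$, but is pinned to $r-s$ through the homogeneity degree $\omega=\alpha+\beta-\gamma-a$ (the paper sets $t=\bigl(s-r-\frac{a+1}{p'}\bigr)/\omega$ and then chooses $0<r-s<\frac{b+1}{q}$ to force $0<t<1$); this does not affect the viability of your approach.
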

\begin{proof}
We first observe that as $\gamma = \alpha+\beta+1-\frac{a+1}{p} + \frac{b+1}{q},$ the condition $-p(\gamma-\beta-1)<a+1<p(\beta + 1)$ is equivalent to $-\alpha q<b+1<q(\gamma - \alpha).$ Let us put $\omega = \alpha+\beta-\gamma-a$ and observe that $$\omega = \alpha+\beta-\gamma-a = -\left(\frac{a+1}{p'}+\frac{b+1}{q} \right)<0.$$ Now, we observe that $a+1<p(\beta+1)$ is equivalent to $(\beta-a)+\frac{a+1}{p'}>0.$ As $\omega<0,$ we obtain $(\beta-a)\omega+\frac{a+1}{p'}\omega <0,$ which is the same as $$-\frac{\beta -a}{p'}(a+1)-\frac{\beta-a}{q}(b+1)+\frac{a+1}{p'}\omega<0$$ or 
\begin{eqnarray}
\label{e1}
\frac{a+1}{p'}\omega - \frac{\beta-a}{p'}(a+1)<\frac{\beta-a}{q}(b+1).
\end{eqnarray}
We also have that $-\alpha q<b+1$ or $-\alpha<\frac{b+1}{q}$ is equivalent to $-\alpha\omega-\frac{b+1}{q}\omega>0$ or \\$\alpha\frac{a+1}{p'}+\alpha\frac{b+1}{q}-\frac{b+1}{q}\omega>0$ which is the same as 
\begin{eqnarray}
\label{e2}
\frac{b+1}{q}\omega-\alpha\frac{b+1}{q}<\alpha\frac{a+1}{p'}.
\end{eqnarray}
From \eqref{e1} and \eqref{e2}, we see that we can find two numbers $r$ and $s$ such that
\begin{eqnarray}
\label{e3}
\frac{a+1}{p'}\omega-\frac{\beta-a}{p'}(a+1)<\omega s+(\beta-a)(r-s)<\frac{\beta-a}{q}(b+1)\quad\mbox{and}
\end{eqnarray}
\begin{eqnarray}
\label{e4}
\frac{b+1}{q}\omega-\alpha\frac{b+1}{q}<\omega r+\alpha(s-r)<\alpha\frac{a+1}{p'}.
\end{eqnarray}
\eqref{e3} is equivalent to the inequality
\begin{eqnarray}
\label{e5}
-\frac{\beta-a}{\omega}\left[-\frac{b+1}{q}+r-s\right]<s<\frac{a+1}{p'}+\frac{\beta-a}{\omega}\left[-\frac{a+1}{p'}+s-r\right]
\end{eqnarray}
while \eqref{e4} is equivalent to
\begin{eqnarray}
\label{e6}
\frac{\alpha}{\omega}\left[\frac{a+1}{p'}+r-s\right]<r<\frac{b+1}{q}+\frac{\alpha}{\omega}\left[-\frac{b+1}{q}+r-s\right].
\end{eqnarray}
Let $$t=\frac{-\frac{a+1}{p'}+s-r}{\omega}.$$ It is easy to see that $$1-t=\frac{r-s-\frac{b+1}{q}}{\omega}.$$ Hence \eqref{e5} becomes
\begin{eqnarray}
\label{e7}
-(\beta-a)(1-t)<s<\frac{a+1}{p'}+(\beta-a)t
\end{eqnarray}
and \eqref{e6} becomes
\begin{eqnarray}
\label{e8}
-\alpha t<r<\frac{b+1}{q}+\alpha(1-t).
\end{eqnarray}
As $\gamma>0,$ we can even choose $r$ and $s$ in \eqref{e7} and \eqref{e8} so that $0<r-s<\frac{b+1}{q}.$ Note that this choice clearly gives us that $0<t<1.$

Next, we observe that the operator $H_{\alpha,\beta,\gamma}$ can be represented as $$H_{\alpha,\beta,\gamma}f(x) = \int_0^{\infty}\!\!\!K(x,y)f(y)y^{a}\mathrm{d}y\quad\mbox{where}\quad K(x,y)=\frac{y^{\beta-a}x^{\alpha}}{(x+y)^{\gamma}}.$$ Let us define  $h_1(x) = x^{-s}$ and $h_2(y) = y^{-r}.$ Applying Okikiolu's test to $H_{\alpha,\beta,\gamma}$ we obtain
\begin{eqnarray*}
\int_0^{\infty}\!\!\!K(x,y)^{tp'}y^{-sp'}y^a\mathrm{d}y& = &\int_0^{\infty}\!\frac{x^{\alpha tp'}y^{(\beta-a)tp'}y^{-sp'+a}}{(x+y)^{t\gamma p'}}\mathrm{d}y\\
&=& x^{\alpha tp'-t\gamma p'-sp'+(\beta-a)tp'+a+1}\int_0^{\infty}\!\frac{y^{-sp'+(\beta-a)tp'+a}}{(1+y)^{t\gamma p'}}\mathrm{d}y.
\end{eqnarray*}
We observe that the right inequality in \eqref{e7} provides $a+1+(\beta-a)tp'-sp'>0$. From the definition of $\omega, t$ and the first inequality in \eqref{e8}, we have that
\begin{eqnarray*}
t\gamma p' + sp' - (\beta-a)tp'-a-1 & = & (\gamma-\beta+a)tp'+sp'-a-1\\
& = & \left(\alpha+\frac{a+1}{p'}+\frac{b+1}{q}\right)tp'+sp'-a-1\\
& = & (\alpha - \omega)tp'+sp'-a-1\\
& = & \alpha tp'-\omega tp'+sp'-a-1\\
& = & \alpha tp'+\left(\frac{a+1}{p'}+r-s\right)p'+sp'-a-1\\
& = & \alpha tp'+rp'>0.
\end{eqnarray*}
It follows that 
\begin{eqnarray*}
\int_0^{\infty}\!\!\!K(x,y)^{tp'}y^{-sp'}y^a\mathrm{d}y & = & B(-sp'+(\beta-a)tp'+a+1, \alpha tp'+rp') x^{-rp'}\\
& = & B(sp'+(\beta-a)tp'+a+1,\alpha tp'+rp')h_2^{p'}(x).
\end{eqnarray*}
In the same way, we obtain
\begin{eqnarray*}
\int_0^{\infty}\!\!\![K(x,y)]^{(1-t)q}h_2^q(x)x^b\mathrm{d}x & = & \int_0^{\infty}\!\frac{x^{(1-t)\alpha q}y^{(\beta-a)(1-t)q}x^{-rq}x^b}{(x+y)^{\gamma(1-t)q}}\mathrm{d}x\\
& = & y^{(\beta-a)(1-t)q-\gamma(1-t)q-rq+\alpha(1-t)q+b+1}\int_0^{\infty}\!\frac{x^{-rq+\alpha(1-t)q+b}}{(1+x)^{\gamma(1-t)q}}\mathrm{d}x.
\end{eqnarray*}
From the second inequality in \eqref{e8}, we get $-rq+\alpha(1-t)q+b+1>0.$ From the definition of $\omega$ and $1-t,$ and the first inequality in \eqref{e7}, we obtain
\begin{eqnarray*}
\gamma(1-t)q+rq-\alpha(1-t)q-b-1 & = & (\gamma-\alpha)(1-t)q+rq-b-1\\
& = & \left(\beta+1 - \frac{a+1}{p}+\frac{b+1}{q}\right)(1-t)q+rq-b-1\\
& =& \left(\beta -a + \frac{a+1}{p'}+\frac{b+1}{q}\right)(1-t)q+rq-b-1\\
& = & (\beta-a-\omega)(1-t)q+rq-b-1\\
& = & (\beta-a)(1-t)q-\omega(1-t)q+rq-b-1\\
& = & (\beta-a)(1-t)q+\left(\frac{b+1}{q}-r+s\right)q+rq-b-1\\
& = & (\beta-a)(1-t)q+sq>0.
\end{eqnarray*}
Hence
\begin{eqnarray*}
\int_0^{\infty}\!\!\![K(x,y)]^{(1-t)q}h_2^q(x)x^b\mathrm{d}x & = & B(-rq+\alpha(1-t)q+b+1, (\beta-a)(1-t)q+sq)y^{-sq}\\
& = & B(-rq+\alpha(1-t)q+b+1, (\beta-a)(1-t)q+sq)h_1^q(y)
\end{eqnarray*}
and the proof is complete.
\end{proof}
We next consider the limit case $p=1$.
\begin{lemma}\label{lem:sufflimitcase}
Let $1\le q<\infty,$ $\alpha,\beta,\gamma \in\mathbb{R};$ and $a,b>-1.$ Assume that $$\gamma = \alpha+\beta-a + \frac{b+1}{q}$$ and that $$\beta+1-\gamma<a+1<\beta + 1.$$ Then the operator $H_{\alpha, \beta,\gamma}$ is bounded from $ L^1((0,\infty), y^a\mathrm{d}y)$ to $ L^q((0,\infty), y^b\mathrm{d}y).$
\end{lemma}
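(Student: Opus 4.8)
The plan is to bound $H_{\alpha,\beta,\gamma}f$ directly via Minkowski's integral inequality, which is available precisely because $q\ge 1$, instead of going through a Schur-type test. Viewing
\[
H_{\alpha,\beta,\gamma}f(x)=\int_0^{\infty}\Bigl(\tfrac{x^{\alpha}}{(x+y)^{\gamma}}\,y^{\beta}f(y)\Bigr)\mathrm{d}y
\]
as a superposition in $y$ of functions of $x$, and taking the $L^q\bigl((0,\infty),x^b\mathrm{d}x\bigr)$-norm, Minkowski's inequality for integrals would give
\[
\|H_{\alpha,\beta,\gamma}f\|_{q,b}\le \int_0^{\infty}|f(y)|\,y^{\beta}\Bigl(\int_0^{\infty}\tfrac{x^{\alpha q+b}}{(x+y)^{\gamma q}}\mathrm{d}x\Bigr)^{1/q}\mathrm{d}y .
\]

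The next step is to evaluate the inner integral by Lemma \ref{lem:betafunctionconditions}, which first requires checking its convergence conditions $\alpha q+b>-1$ and $\gamma q-\alpha q-b>1$, i.e.\ $-q\alpha<b+1<q(\gamma-\alpha)$. Using the hypothesis $\gamma=\alpha+\beta-a+\frac{b+1}{q}$ one has $q(\gamma-\alpha)=q(\beta-a)+b+1$, so the right inequality reduces to $\beta>a$, i.e.\ to $a+1<\beta+1$; similarly $\frac{b+1}{q}=\gamma-\alpha-\beta+a$, so the left inequality $-\alpha<\frac{b+1}{q}$ reduces to $\gamma-\beta+a>0$, i.e.\ to $\beta+1-\gamma<a+1$. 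Both are exactly the standing hypotheses, so Lemma \ref{lem:betafunctionconditions} applies and yields $\int_0^{\infty}\frac{x^{\alpha q+b}}{(x+y)^{\gamma q}}\mathrm{d}x=B(\alpha q+b+1,\gamma q-\alpha q-b-1)\,y^{-\gamma q+\alpha q+b+1}$, whence the inner factor equals $C\,y^{\alpha-\gamma+\frac{b+1}{q}}$ with $C=B(\alpha q+b+1,\gamma q-\alpha q-b-1)^{1/q}$.

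Substituting back, $\|H_{\alpha,\beta,\gamma}f\|_{q,b}\le C\int_0^{\infty}|f(y)|\,y^{\beta+\alpha-\gamma+\frac{b+1}{q}}\mathrm{d}y$, and the relation $\gamma=\alpha+\beta-a+\frac{b+1}{q}$ says exactly that $\beta+\alpha-\gamma+\frac{b+1}{q}=a$; hence $\|H_{\alpha,\beta,\gamma}f\|_{q,b}\le C\|f\|_{1,a}$, first for compactly supported $f$ and then, by density, for all $f\in L^1\bigl((0,\infty),y^a\mathrm{d}y\bigr)$. I do not expect a genuine obstacle here: the only slightly delicate points are the routine verification of the two $B$-function convergence inequalities from the parameter hypotheses and the exponent bookkeeping. (One could instead derive the conclusion from the limit-case Schur test of Lemma \ref{lem:okikiolulimitcase}, applied to the kernel $K(x,y)=\frac{y^{\beta-a}x^{\alpha}}{(x+y)^{\gamma}}$ with power weights $h_1(y)=y^{-s}$, $h_2(x)=x^{-r}$ and a suitable splitting exponent $t\in[0,1]$, mirroring the proof of Lemma \ref{lem:suffnotlimit}; but the Minkowski argument above is shorter and uniform in $q\ge 1$.)
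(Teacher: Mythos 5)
Your proof is correct, and it takes a genuinely different route from the paper's. The paper handles this case by invoking the limit-case Schur test of Lemma \ref{lem:okikiolulimitcase} with the kernel $K(x,y)=\frac{y^{\beta-a}x^{\alpha}}{(x+y)^{\gamma}}$ relative to $y^{a}\mathrm{d}y$: it recycles the auxiliary exponents $r$, $s$, $t$ and $\omega$ from the proof of Lemma \ref{lem:suffnotlimit} (specialized to $p=1$, so $\omega=-\frac{b+1}{q}$ and $t=\frac{s-r}{\omega}$), verifies the essential-supremum condition by matching exponents in numerator and denominator, and checks the second condition by the same $\beta$-function computation as before. Your Minkowski argument sidesteps all of that bookkeeping: for $p=1$ the boundedness of a positive kernel operator into $L^q$ reduces, via Minkowski's integral inequality, to the uniform bound $\sup_{y>0} y^{-a}\,y^{\beta}\bigl(\int_0^{\infty}\frac{x^{\alpha q+b}}{(x+y)^{\gamma q}}\mathrm{d}x\bigr)^{1/q}<\infty$, and your verification that the two convergence inequalities of Lemma \ref{lem:betafunctionconditions} are exactly the standing hypotheses, and that the homogeneity relation kills the remaining power of $y$, is complete and accurate. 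Two minor remarks: since the kernel is positive, the estimate holds for all $f\in L^1_a$ directly by Tonelli, so the closing density step is unnecessary; and your approach has the added virtue of producing an explicit constant $B(\alpha q+b+1,\gamma q-\alpha q-b-1)^{1/q}$, which for $q=1$ recovers the sharp norm of Lemma \ref{lem:okikiolulimitcasesharp}.
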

\begin{proof}
Assume that $\gamma=\alpha+\beta-a+\frac{b+1}{q}$ and $\beta+1-\gamma<a+1<\beta+1$ then $$\beta+1-\gamma<a+1<\beta+1$$ is equivalent to $$-\alpha q<b+1<q(\gamma-\alpha).$$ In this case $\omega = \alpha+\beta-\gamma-a=-\frac{b+1}{q}<0$ and $t=\frac{s-r}{\omega}$. The inequalities (\ref{e7}) and (\ref{e8}) reduce to

\begin{eqnarray}
\label{e7limit}
-(\beta-a)(1-t)<s<(\beta-a)t
\end{eqnarray}
and 
\begin{eqnarray}
\label{e8limit}
-\alpha t<r<\frac{b+1}{q}+\alpha(1-t)
\end{eqnarray}
respectively.

We first check the first condition in Lemma \ref{lem:okikiolulimitcase}, that is there exists a constant $C_1>0$ such that 
$$\sup_{0<y<\infty}\,h_1(y)K(x,y)^t\le C_1h_2(x)\,\,\,\textrm{for almost every}\,\,\,x\in \mathbb{R}$$ or equivalently,$$ \sup_{0<y<\infty}y^{-s}\left(\frac{y^{\beta-a}x^{\alpha}}{(x+y)^{\gamma}}\right)^tx^{r}\le C_1$$
for almost every $x\in \mathbb{R}$. This is the case since the power in the denominator is equal to the sum of the exponents in the numerator. Indeed, we have 
\Beas\gamma t &=& (\alpha+\beta-a-\omega)t=\alpha t+(\beta-a)t-\omega t\\ &=& \alpha t+(\beta-a)t+r-s.
\Eeas
That the second condition in Lemma \ref{lem:okikiolulimitcase} is satisfied follows as in the proof of the previous lemma with the help of inequalities (\ref{e7limit}) and (\ref{e8limit}).
\end{proof}
\subsection{Proof of Theorem \ref{thm:main1} and the endpoint cases}

The proof of Theorem \ref{thm:main1} follows easily from Lemma \ref{lem:necessadjoint} and Lemma \ref{lem:suffnotlimit} in the case $p>1$. For $1=p\le q<\infty$, the proof follows from Lemma \ref{lem:necessdirect}, Lemma \ref{lem:sufflimitcase} and the observation made in Lemma \ref{lem:equivcond}.

We prove here Theorem \ref{thm:main2pinfty} and Theorem \ref{thm:main3inftyinfty}.
\begin{proof}[Proof of Theorem \ref{thm:main2pinfty}]
Let us start by the sufficiency. Assume that the parameters satisfy the conditions of the theorem. Let $f\in L_a^p$. Then using the H\"older's inequality and Lemma \ref{lem:betafunctionconditions}, we obtain
\Beas
\left|H_{\alpha,\beta,\gamma}f(x)\right| &=& \left|x^\alpha\int_0^\infty\frac{f(y)}{(x+y)^\gamma}y^\beta \mathrm{d}y\right|\\ &\le& \|f\|_{p,a}x^\alpha \left(\int_0^\infty\frac{y^{p'(\beta-a)}}{(x+y)^{p'\gamma}}y^a\mathrm{d}y\right)^{\frac{1}{p'}}\\ &=& [B\left(p'(\beta-a)+a+1, p'\alpha\right)]^{\frac{1}{p'}}]\|f\|_{p,a}.
\Eeas
Hence $$\|H_{\alpha,\beta,\gamma}f\|_{L^\infty}\le [B\left(p'(\beta-a)+a+1, p'\alpha\right)]^{\frac{1}{p'}}]\|f\|_{p,a}.$$
Conversely, assume that the operator $H_{\alpha,\beta,\gamma}$ is bounded from $L_a^p$ to $L^\infty$. Then its adjoint $H_{\alpha,\beta,\gamma}^*$ is bounded from $L^1$ to $L_a^{p'}$. One easily check that in this case, $$H_{\alpha,\beta,\gamma}^*f(y)=y^{\beta-a}\int_0^\infty \frac{f(x)}{(x+y)^\gamma}x^{\alpha} \mathrm{d}x.$$

Let $R>0$ and define $f_R(x):=f(Rx).$ Then it is easy to see that if $f\in  L^1((0,\infty))$ then $f_R\in  L^1((0,\infty))$ and $\|f_R\|_{L^1}=R^{-1}\|f\|_{L^1}.$
As in the proof of Lemma \ref{lem:necessdirect} we obtain
$$
H_{\alpha,\beta,\gamma}^*f_R(x)= R^{\gamma-\beta-\alpha+a-1}H_{\alpha,\beta,\gamma}^*f(Rx).
$$
So
$$
\|H_{\alpha,\beta,\gamma}^*f_R\|_{p',a}^{p'} = R^{p'(\gamma-\alpha-\beta+a-1)-a-1}\|H_{\alpha,\beta,\gamma}^*f\|_{p',a}^{p'}.
$$
Now, that for any $f\in L^1((0,\infty))$, $\|H_{\alpha,\beta,\gamma}^*f_R\|_{p',a}\le C\|f_R\|_{L^1}$ is equivalent to $$R^{\gamma-\alpha-\beta+a-1-\frac{a+1}{p'}}\|H_{\alpha,\beta,\gamma}f\|_{p',a}\le CR^{-1}\|f\|_{L^1}$$ which is the same as $$R^{\gamma-\alpha-\beta+a-1-\frac{a+1}{p'}+1}\|H_{\alpha,\beta,\gamma}^*f\|_{p',a}\le C\|f\|_{L^1}$$ for any $f\in L^1((0,\infty))$. This is only possible if $\gamma-\alpha-\beta+a-\frac{a+1}{p'}=0$. That is $\gamma=\alpha+\beta+1-\frac{a+1}{p}$.

Let us take again $f(y)=\chi_{[1,2]}(y)$ as test function and proceed as in the proof of Lemma \ref{lem:necessadjoint}. We obtain that $-p'(\beta-a)<a+1<p'(\gamma-\beta+a)$. Combining the equality $\gamma=\alpha+\beta+1-\frac{a+1}{p}$ and the inequality $0<a+1<p'(\gamma-\beta+a)$, we obtain $0<a+1<p'\alpha$ and hence that $\alpha>0$. Note that under the equality $\gamma=\alpha+\beta+1-\frac{a+1}{p}$, the inequality  $-p'(\beta-a)<a+1<p'(\gamma-\beta+a)$ is equivalent to $-p\alpha<a+1<p(\beta+1)$. The proof is complete.
\end{proof}
\begin{proof}[Proof of Theorem \ref{thm:main3inftyinfty}]
Following Lemma \ref{lem:okikiolulimitcaseinfty}, we only have to find necessary and suficient conditions on the parameters so that 
$$\sup_{0<x<\infty}\int_0^\infty\, \frac{x^\alpha y^\beta}{(x+y)^\gamma}\mathrm{d}y<\infty.$$
Following Lemma \ref{lem:betafunctionconditions}, this is the case if and only if $\beta+1>0$, $\gamma-\beta-1>0$ and $\alpha+\beta+1-\gamma=0$. This is equivalent to $\gamma=\alpha+\beta+1$, $\alpha>0$ and $\beta>-1$. Moreover, follwing Lemma \ref{lem:okikiolulimitcaseinfty}, we have 
\Beas
\|H_{\alpha,\beta,\gamma}\|_{L^\infty\rightarrow L^\infty} &=& \sup_{0<x<\infty}\int_0^\infty\, \frac{x^\alpha y^\beta}{(x+y)^\gamma}\mathrm{d}y\\ &=& B(\beta+1,\alpha).
\Eeas
The proof is complete.
\end{proof}
\subsection{Sharp Norm for Generalized Hilbert Operator}
Let us start by proving the following estimate.
\begin{lemma}\label{lem:stepsharp}
Let $1<p<\infty$, and let $\alpha,\beta$ and $\gamma$ be real numbers such that $\gamma=\alpha+\beta+1>0$. Let $a>-1$ and let $0<\varepsilon<p(\beta+1)-(a+1)$. Then
$$\int_1^{\infty}\!\!\!x^{a+\alpha-\frac{a+1+\xi}{p}}\left(\int_0^1\!\!\frac{y^{\beta-\frac{a+1+\xi}{p}}}{(x+y)^{\gamma}}\mathrm{d}y\right)\mathrm{d}x\le \frac{1}{\beta+\frac{a+1+\xi}{p'}-a}\times \frac{1}{\beta+1-\frac{a+1+\xi}{p}}.$$
\end{lemma}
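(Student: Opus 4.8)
The final statement is a closed-form upper bound for a double integral over the region $\{(x,y):x\ge 1,\ 0<y<1\}$; I would prove it by a direct two-stage computation whose only content is the evaluation of two one-dimensional power integrals together with some bookkeeping with the exponents. Put $\sigma=\frac{a+1+\xi}{p}$. The hypothesis $0<\xi<p(\beta+1)-(a+1)$ says precisely that $\sigma>0$ and $\beta+1-\sigma>0$, and by hypothesis $\gamma=\alpha+\beta+1>0$ and $a>-1$.

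First I would estimate the inner integral. On the region of integration one has $x\ge1>y>0$, hence $x+y\ge x$; since $\gamma>0$ this gives $(x+y)^{-\gamma}\le x^{-\gamma}$, so that
\[
\int_0^1\frac{y^{\beta-\sigma}}{(x+y)^{\gamma}}\,\mathrm{d}y\ \le\ x^{-\gamma}\int_0^1 y^{\beta-\sigma}\,\mathrm{d}y\ =\ \frac{x^{-\gamma}}{\beta+1-\sigma},
\]
the last integral being finite precisely because $\beta-\sigma>-1$, i.e.\ by the upper bound imposed on $\xi$. This already produces the factor $\bigl(\beta+1-\tfrac{a+1+\xi}{p}\bigr)^{-1}$ of the claimed bound.

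Next I would substitute this into the outer integral and use the relation $\gamma=\alpha+\beta+1$:
\[
\int_1^\infty x^{a+\alpha-\sigma}\Bigl(\int_0^1\frac{y^{\beta-\sigma}}{(x+y)^{\gamma}}\,\mathrm{d}y\Bigr)\,\mathrm{d}x\ \le\ \frac{1}{\beta+1-\sigma}\int_1^\infty x^{a+\alpha-\sigma-\gamma}\,\mathrm{d}x\ =\ \frac{1}{\beta+1-\sigma}\int_1^\infty x^{a-\beta-1-\sigma}\,\mathrm{d}x .
\]
The remaining integral is a pure power of $x$, so it is evaluated by the power rule; it converges exactly when $a-\beta-1-\sigma<-1$, and its value then simplifies, using $\tfrac1p+\tfrac1{p'}=1$ to trade $\sigma$ for the conjugate exponent, to the reciprocal of the first denominator on the right-hand side of the statement. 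Multiplying the two reciprocals gives the asserted inequality.

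The two displays above are each one-line applications of the power rule, so I do not anticipate a genuine obstacle. The point that needs care is the bookkeeping in the second step: checking that the outer power integral converges under the three hypotheses — this is where the relation $\gamma=\alpha+\beta+1$ must be combined with the bound on $\xi$ and with $a>-1$ — and verifying that the two resulting exponents collapse, after the conjugate-exponent rewriting, to precisely the two factors in the statement. Everything else is routine.
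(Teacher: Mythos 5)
Your strategy --- bound $(x+y)^{-\gamma}$ by $x^{-\gamma}$ on the region $x\ge 1$, $0<y<1$, separate the two one--dimensional integrals, and evaluate each by the power rule --- is exactly the paper's proof. However, your final step does not work as written. With $\sigma=\frac{a+1+\xi}{p}$ your outer integral is $\int_1^\infty x^{a-\beta-1-\sigma}\,\mathrm{d}x=\frac{1}{\beta+\sigma-a}=\frac{1}{\beta+\frac{a+1+\xi}{p}-a}$, and there is no identity that lets you ``trade $\sigma$ for the conjugate exponent'': $\frac{a+1+\xi}{p}$ and $\frac{a+1+\xi}{p'}$ are genuinely different numbers unless $p=2$, so your computation does not produce the factor $\frac{1}{\beta+\frac{a+1+\xi}{p'}-a}$ that appears in the statement. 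Worse, when $p>2$ one has $\frac{a+1+\xi}{p}<\frac{a+1+\xi}{p'}$, so the bound you actually obtain is \emph{larger} than the one claimed, and the stated hypotheses do not even guarantee $\beta+\frac{a+1+\xi}{p}-a>0$, i.e.\ convergence of your outer integral. So as it stands the crucial identification at the end is asserted, not proved.

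The resolution is that the exponent in the outer integral of the statement is a misprint: it should read $x^{a+\alpha-\frac{a+1+\xi}{p'}}$. This is the exponent appearing in the second line of the paper's own proof, and it is the one that actually arises where the lemma is invoked in the sharpness argument, from pairing $H_{\alpha,\beta,\gamma}f$ against $g(x)=x^{-\frac{a+1+\xi}{p'}}$ with respect to $x^a\,\mathrm{d}x$. With that exponent your two displays go through verbatim: the outer integral becomes $\int_1^\infty x^{a-\beta-1-\frac{a+1+\xi}{p'}}\,\mathrm{d}x=\frac{1}{\beta+\frac{a+1+\xi}{p'}-a}$, and its denominator is positive because $\beta+\frac{a+1+\xi}{p'}-a>\beta+\frac{a+1}{p'}-a=\beta+1-\frac{a+1}{p}>0$, the last inequality being exactly the nonemptiness of the interval $\bigl(0,p(\beta+1)-(a+1)\bigr)$ in the hypothesis. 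You should make this convergence check explicit rather than deferring it, since it is the only place where the interplay of the three hypotheses is actually used.
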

\begin{proof}
We easily obtain
\Beas
I &:=& \int_1^{\infty}\!\!\!x^{a+\alpha-\frac{a+1+\xi}{p}}\left(\int_0^1\!\!\frac{y^{\beta-\frac{a+1+\xi}{p}}}{(x+y)^{\gamma}}\mathrm{d}y\right)\mathrm{d}x\\ &\le& \int_1^{\infty}\!\!\!x^{a+\alpha-\gamma-\frac{a+1+\xi}{p'}}\left(\int_0^1\!\!y^{\beta-\frac{a+1+\xi}{p}}\mathrm{d}y\right)\mathrm{d}x\\ &=& \int_1^{\infty}\!\!\!x^{a-\beta-1-\frac{a+1+\xi}{p'}}\mathrm{d}x\int_0^1\!\!y^{\beta-\frac{a+1+\xi}{p}}\mathrm{d}y\\ &=& \frac{1}{\beta+\frac{a+1+\xi}{p'}-a}\times \frac{1}{\beta+1-\frac{a+1+\xi}{p}}.
\Eeas
\end{proof}
We next prove the following.
\begin{theorem}
Let $1<p<\infty$ and $a>-1.$ Assume that $-p\alpha<a+1<p(\beta+1)$ and $\gamma = \alpha+\beta+1.$ Then $$\|H_{\alpha,\beta,\gamma}\|_{L^p_a\rightarrow L^p_a} = B\left(\beta+1-\frac{a+1}{p},\alpha+\frac{a+1}{p}\right).$$
\end{theorem}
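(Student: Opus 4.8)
## Proof proposal for the sharp norm identity

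The plan is to establish the upper bound and the lower bound separately, both reducing to the computation of the $\beta$-function value $B\!\left(\beta+1-\frac{a+1}{p},\alpha+\frac{a+1}{p}\right)$. Write $\sigma=\frac{a+1}{p}$, so the hypotheses $-p\alpha<a+1<p(\beta+1)$ say exactly that $-\alpha<\sigma$ and $\sigma<\beta+1$, i.e. $\alpha+\sigma>0$ and $\beta+1-\sigma>0$, which are precisely the two arguments of the $\beta$-function being positive. This is the diagonal case $\gamma=\alpha+\beta+1$, $p=q$, $a=b$ of the earlier results, so Corollary \ref{cor:maincor1} (equivalently Lemma \ref{lem:suffnotlimit}) already gives boundedness; what remains is the sharp constant.

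\textbf{Upper bound.} The cleanest route is the classical Schur test with power weights. Write $H_{\alpha,\beta,\gamma}f(x)=\int_0^\infty K(x,y)f(y)y^a\,\mathrm{d}y$ with $K(x,y)=\frac{y^{\beta-a}x^\alpha}{(x+y)^\gamma}$, and test against $h(y)=y^{-\delta}$ for a parameter $\delta$ to be chosen. Using Lemma \ref{lem:betafunctionconditions}, compute $\int_0^\infty K(x,y)\,y^{-\delta p'}\,y^a\,\mathrm{d}y$ and $\int_0^\infty K(x,y)\,x^{-\delta p}\,x^a\,\mathrm{d}x$; because $\gamma=\alpha+\beta+1$ the homogeneity is exact and each integral is a constant times the appropriate power of the other variable. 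The two resulting $\beta$-function constants must be multiplied and their geometric-type combination minimized in $\delta$; the optimal choice is $\delta=\sigma/p'+\text{(shift)}$ arranged so that both $\beta$-function factors collapse to $B\!\left(\beta+1-\sigma,\alpha+\sigma\right)$, yielding $\|H_{\alpha,\beta,\gamma}\|_{L^p_a\to L^p_a}\le B\!\left(\beta+1-\sigma,\alpha+\sigma\right)$. One must check the side conditions $\alpha+1-\delta p'+\cdots>0$ etc. hold precisely under $-\alpha<\sigma<\beta+1$.

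\textbf{Lower bound.} Here the standard device is to test on $f_\xi(y)=y^{-\sigma-\xi/p}\chi_{[1,\infty)}(y)$ (or $\chi_{[0,1]}$, depending on orientation) for small $\xi>0$, which lies in $L^p_a$ with $\|f_\xi\|_{p,a}^p=1/\xi$. Computing $\|H_{\alpha,\beta,\gamma}f_\xi\|_{p,a}^p$ from below: restricting the inner integral to $y\ge 1$ and using the substitution $y=xu$ together with $\gamma=\alpha+\beta+1$, the main term is $\left(\int_0^\infty \frac{u^{\beta-\sigma-\xi/p}}{(1+u)^\gamma}\,\mathrm{d}u\right)^p\cdot\frac{1}{\xi}$, which by Lemma \ref{lem:betafunctionconditions} tends to $B\!\left(\beta+1-\sigma,\alpha+\sigma\right)^p/\xi$ as $\xi\to0^+$. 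The discrepancy coming from replacing the full domain $(0,\infty)$ by $[1,\infty)$ in the inner integral — the "error'' over $y\in(0,1)$ — must be shown to be $o(1/\xi)$; this is exactly the content of Lemma \ref{lem:stepsharp}, which bounds the relevant double integral by a constant independent of $\xi$ (uniformly for small $\xi$). Dividing $\|H_{\alpha,\beta,\gamma}f_\xi\|_{p,a}$ by $\|f_\xi\|_{p,a}$ and letting $\xi\to0^+$ gives the matching lower bound $\|H_{\alpha,\beta,\gamma}\|_{L^p_a\to L^p_a}\ge B\!\left(\beta+1-\sigma,\alpha+\sigma\right)$.

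\textbf{Main obstacle.} The computations in the upper-bound step are routine once the Schur weight $\delta$ is pinned down, and the limiting identification in the lower bound is standard. The genuinely delicate point is the lower bound's error control: one must verify that the contribution of $y\in(0,1)$ (and the full-line-versus-half-line replacement in the inner integral, plus any Minkowski/triangle-inequality losses when extracting the main term from $\|H f_\xi\|_{p,a}$) is $O(1)$ rather than $O(1/\xi)$, so that it is swallowed after dividing by $\|f_\xi\|_{p,a}=\xi^{-1/p}\to\infty$. Lemma \ref{lem:stepsharp} is tailored precisely for this — its right-hand side $\frac{1}{\beta+\frac{a+1+\xi}{p'}-a}\cdot\frac{1}{\beta+1-\frac{a+1+\xi}{p}}$ is bounded for $0<\xi<p(\beta+1)-(a+1)$ — so the proof will hinge on invoking it together with a careful, but elementary, splitting of the integration domain and an application of Minkowski's inequality to pass from the $L^p$-norm of a sum to a sum of $L^p$-norms of the pieces.
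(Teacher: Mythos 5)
Your proposal is correct and matches the paper's strategy in all essentials: the upper bound via Schur's test (already contained in the proof of Lemma \ref{lem:suffnotlimit}), and the lower bound via the test functions $y^{-(a+1+\xi)/p}\chi_{[1,\infty)}(y)$, the splitting of the inner integral over $[1,\infty)$ as the integral over $(0,\infty)$ minus the one over $(0,1)$, and the limit $\xi\to 0^{+}$. The one point where you diverge is the extraction of the lower bound: the paper does not estimate $\|H_{\alpha,\beta,\gamma}f_\xi\|_{p,a}$ directly, but argues by contradiction and pairs $H_{\alpha,\beta,\gamma}f_\xi$ against the dual test function $g_\xi(x)=x^{-(a+1+\xi)/p'}\chi_{[1,\infty)}(x)$, bounding the bilinear form $\int_0^\infty g_\xi(x)\,H_{\alpha,\beta,\gamma}f_\xi(x)\,x^a\,\mathrm{d}x$ from below; the error coming from $y\in(0,1)$ then enters linearly and is exactly the double integral controlled by Lemma \ref{lem:stepsharp}. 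In your direct route the error is the function $E(x)=x^{\alpha}\int_0^1 y^{\beta-(a+1+\xi)/p}(x+y)^{-\gamma}\,\mathrm{d}y$, and after Minkowski you need $\|E\,\chi_{[1,\infty)}\|_{p,a}=O(1)$, which Lemma \ref{lem:stepsharp} does not literally provide (it bounds an $L^1$-type pairing, not an $L^p$ norm) but which follows from the same elementary estimate $(x+y)^{\gamma}\ge x^{\gamma}$ for $x\ge 1$ (note $\gamma=(\alpha+\frac{a+1}{p})+(\beta+1-\frac{a+1}{p})>0$) together with $a+1<p(\beta+1)$. So your plan is sound; either restate the error-control lemma in $L^p$ form for your version, or switch to the bilinear formulation and use the lemma verbatim.
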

\begin{proof}
Note that $\|H_{\alpha,\beta,\gamma}\|_{L^p_a\rightarrow L^p_a} \le B\left(\beta+1-\frac{a+1}{p},\alpha+\frac{a+1}{p}\right)$ follows using Schur's test and it is in proof of the sufficient part of Theorem \ref{thm:main1}.  To prove that $B\left(\beta+1-\frac{a+1}{p},\alpha+\frac{a+1}{p}\right)$ is sharp, we proceed by contradiction. Assume that\\ $B\left(\beta+1-\frac{a+1}{p},\alpha+\frac{a+1}{p}\right)$ is not sharp. i.e. there exists $0<K< B\left(\beta+1-\frac{a+1}{p},\alpha+\frac{a+1}{p}\right)$ such that for any $f\in L^p_a((0,\infty)),$ $$\|H_{\alpha,\beta,\gamma}f\|_{L^p_a}\le K\|f\|_{L^p_a}$$ or equivalently, for any $f\in L^p_a((0,\infty))$ and $g\in L_a^{p'}((0,\infty)),$
\begin{eqnarray}
\label{s1}
K\|f\|_{L^p_a}\|g\|_{L^{p'}_a}\ge \int_0^{\infty}\!\!\!g(x)H_{\alpha,\beta,\gamma}f(x)x^a\mathrm{d}x
\end{eqnarray}
Let $0<\varepsilon<p(\beta+1)-(a+1)$ and define\\ $\begin{array}{lcr}f(x)=\left\{\begin{array}{lcr}0 & \mbox{if} & 0<x<1\\ x^{-\frac{a+1+\xi}{p}} & \mbox{if} & x\ge 1\end{array}\right.&\quad\mbox{and} &\quad g(x)= \left\{\begin{array}{lcr}0 & \mbox{if} & 0<x<1\\ x^{-\frac{a+1+\xi}{p'}} & \mbox{if} & x\ge 1\end{array}\right.\end{array}.$\\ Then $$\|f\|_{L^p_a} = \frac{1}{\xi^{\frac{1}{p}}}\qquad\mbox{and}\qquad\|g\|_{L^{p'}_a} = \frac{1}{\xi^{\frac{1}{p'}}}.$$ Substituting these into \eqref{s1} and using Lemma \ref{lem:stepsharp}, we obtain 
\begin{eqnarray*}
\frac{K}{\xi} & \ge & \int_1^{\infty}\!\!\!x^{a+\alpha-\frac{a+1+\xi}{p'}}\left(\int_1^{\infty}\!\!\frac{y^{\beta-\frac{a+1+\xi}{p}}}{(x+y)^{\gamma}}\mathrm{d}y\right)\mathrm{d}x\\
& = &\int_1^{\infty}\!\!\!x^{a+\alpha-\frac{a+1+\xi}{p'}}\left(\int_0^{\infty}\!\!\frac{y^{\beta-\frac{a+1+\xi}{p}}}{(x+y)^{\gamma}}\mathrm{d}y\right)\mathrm{d}x - \int_1^{\infty}\!\!\!x^{a+\alpha-\frac{a+1+\xi}{p}}\left(\int_0^1\!\!\frac{y^{\beta-\frac{a+1+\xi}{p}}}{(x+y)^{\gamma}}\mathrm{d}y\right)\mathrm{d}x\\
& = & B\left(\beta+1-\frac{a+1+\xi}{p},\gamma-\beta-1+\frac{a+1+\xi}{p}\right)\int_1^{\infty}\!\!\!x^{a+\alpha-\frac{a+1+\xi}{p'}-\gamma+\beta+1-\frac{a+1+\xi}{p}}\mathrm{d}x\\
& &\quad - \int_1^{\infty}\!\!\!x^{a+\alpha-\frac{a+1+\xi}{p'}}\left(\int_0^1\!\!\frac{y^{\beta-\frac{a+1+\xi}{p}}}{(x+y)^{\gamma}}\mathrm{d}y\right)\mathrm{d}x\\
& \ge & \frac{1}{\xi}B\left(\beta+1-\frac{a+1+\xi}{p},\alpha+\frac{a+1+\xi}{p}\right) + \frac{1}{a-\beta-\frac{a+1+\xi}{p'}}\times \frac{1}{\beta+1-\frac{a+1+\xi}{p}}
\end{eqnarray*}
So $$K\ge B\left(\beta+1-\frac{a+1+\xi}{p},\alpha+\frac{a+1+\xi}{p}\right)+\frac{\xi}{\left(a-\beta-\frac{a+1+\xi}{p'}\right)\left(\beta+1-\frac{a+1+\xi}{p}\right)}.$$ Thus letting $\xi\longrightarrow 0,$ we obtain $K\ge B\left(\beta+1-\frac{a+1}{p},\alpha+\frac{a+1}{p}\right).$ Hence a contradiction.
\end{proof}
In the limit case $p=1$, we prove the following.
\begin{theorem}
Let $a>-1.$ Assume that $-\alpha<a+1<\beta+1$ and $\gamma = \alpha+\beta+1.$ Then $$\|H_{\alpha,\beta,\gamma}\|_{L^1_a\rightarrow L^1_a} = B\left(\beta-a,\alpha+a+1\right).$$
\end{theorem}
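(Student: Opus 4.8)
The strategy is to treat $H_{\alpha,\beta,\gamma}$ as an integral operator on $L^1_a((0,\infty))$ with positive kernel, so that the sharp $L^1\to L^1$ result of Lemma~\ref{lem:okikiolulimitcasesharp} applies directly and gives both the boundedness criterion and the exact norm. Writing $H_{\alpha,\beta,\gamma}f(x)=\int_0^\infty K(x,y)f(y)\,y^a\,\mathrm{d}y$ with $d\mu(y)=y^a\,\mathrm{d}y$ forces $K(x,y)=\dfrac{x^\alpha y^{\beta-a}}{(x+y)^\gamma}$. By Lemma~\ref{lem:okikiolulimitcasesharp}, $H_{\alpha,\beta,\gamma}$ is bounded on $L^1_a$ if and only if
$$\sup_{y>0}\int_0^\infty K(x,y)\,x^a\,\mathrm{d}x=\sup_{y>0}\int_0^\infty\frac{x^{\alpha+a}\,y^{\beta-a}}{(x+y)^\gamma}\,\mathrm{d}x<\infty,$$
and in that case the operator norm equals this supremum.

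**Key steps.** First I would invoke Lemma~\ref{lem:betafunctionconditions} with $t=y$ to evaluate the inner integral: $\int_0^\infty \dfrac{x^{\alpha+a}}{(x+y)^\gamma}\,\mathrm{d}x$ converges if and only if $\alpha+a>-1$ (i.e. $a+1>-\alpha$) and $\gamma-(\alpha+a)>1$, in which case it equals $B(\alpha+a+1,\gamma-\alpha-a-1)\,y^{\alpha+a+1-\gamma}$. Multiplying by $y^{\beta-a}$ gives $B(\alpha+a+1,\gamma-\alpha-a-1)\,y^{\alpha+a+1-\gamma+\beta-a}=B(\alpha+a+1,\gamma-\alpha-a-1)\,y^{\alpha+\beta+1-\gamma}$. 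The supremum over $y>0$ of this quantity is finite (and nonzero) exactly when the exponent vanishes, i.e. $\gamma=\alpha+\beta+1$; otherwise the supremum is $+\infty$. Under $\gamma=\alpha+\beta+1$ the second convergence condition $\gamma-\alpha-a-1>1$ becomes $\beta-a>0$, i.e. $a+1<\beta+1$, and the first is $a+1>-\alpha$. Hence the boundedness criterion is precisely $\gamma=\alpha+\beta+1$ together with $-\alpha<a+1<\beta+1$, and the norm is the constant $B(\alpha+a+1,\gamma-\alpha-a-1)=B(\alpha+a+1,\beta-a)=B(\beta-a,\alpha+a+1)$ (using symmetry of $B$), as claimed. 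For the converse direction I would note that if either $\gamma\neq\alpha+\beta+1$ or one of the inequalities on $a+1$ fails, then the displayed supremum is infinite, so by the "only if" part of Lemma~\ref{lem:okikiolulimitcasesharp} the operator cannot be bounded; this handles necessity cleanly without resorting to test functions.

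**Main obstacle.** There is essentially no deep obstacle here: the whole proof is a bookkeeping exercise once Lemma~\ref{lem:okikiolulimitcasesharp} and Lemma~\ref{lem:betafunctionconditions} are in hand. The one point requiring a little care is the homogeneity argument that pins down $\gamma=\alpha+\beta+1$: one must observe that the kernel integral $\int_0^\infty K(x,y)x^a\,\mathrm{d}x$ is a pure power $y^{\alpha+\beta+1-\gamma}$ times a constant, and that $\sup_{y>0} y^s<\infty$ forces $s=0$ (the supremum being $+\infty$ for $s\neq 0$, and the constant $B(\alpha+a+1,\beta-a)$ being genuinely positive and finite only when the two $B$-function arguments are positive). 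One should also double-check that when the convergence hypotheses of Lemma~\ref{lem:betafunctionconditions} fail the inner integral diverges for every fixed $y$, so the supremum is again infinite; this is immediate from the stated "if and only if" in that lemma. Assembling these observations yields the theorem.
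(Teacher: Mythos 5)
Your proposal is correct and follows essentially the same route as the paper: both write the kernel of $H_{\alpha,\beta,\gamma}$ with respect to $y^a\,\mathrm{d}y$ as $K(x,y)=x^{\alpha}y^{\beta-a}(x+y)^{-\gamma}$, apply Lemma~\ref{lem:okikiolulimitcasesharp} to identify the norm with $\sup_{y>0}\int_0^\infty K(x,y)\,x^a\,\mathrm{d}x$, and evaluate that integral via Lemma~\ref{lem:betafunctionconditions} to get $B(\beta-a,\alpha+a+1)$. The extra necessity discussion you include is not needed for the statement as given but does no harm.
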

\begin{proof}
Recall that the kernel of $H_{\alpha,\beta,\gamma}$ with respect to the measure $y^a\mathrm{d}y$ is $K(x,y)=\frac{x^\alpha y^{\beta-a}}{(x+y)^\gamma}$. It follows from Lemma \ref{lem:okikiolulimitcasesharp} that 
\Beas
\|H_{\alpha,\beta,\gamma}\|_{L^1_a\rightarrow L^1_a} &=& \sup_{0<y<\infty}\int_0^\infty \frac{x^\alpha y^{\beta-a}}{(x+y)^\gamma}x^a \mathrm{d}x\\ &=& B\left(\beta-a,\alpha+a+1\right).
\Eeas
\end{proof}
\section{Boundedness of a family of positive Bergman-type Operators}
We recall that the integral operator $T^+$ is given by $$T^+f(x+iy) = y^{\alpha}\int_{\mathbb{R}_+^2}\!\!\frac{f(w)}{|z-\bar{w}|^{1+\gamma}}v^{\beta}\mathrm{d}u\mathrm{d}v\quad\mbox{where} \quad w=u+iv, z=x+iy.$$ 
\subsection{Sufficiency for Boundedness of $T^+_{\alpha,\beta,\gamma}$}
Let us start by the following lemma.
\begin{lemma}
\label{b1}
Suppose that $a,b>-1,1\le p<\infty$ and $1\le q\le r \le\infty.$ Then the  operator $T^+$ is bounded from $L_a^{p,q}(\mathbb{R}_+^2)$ to $L_b^{p,r}(\mathbb{R}_+^2)$ if the operator $H_{\alpha,\beta,\gamma}$ is bounded from $L_a^{q}((0,\infty))$ to $L_b^{r}((0,\infty))$.
\end{lemma}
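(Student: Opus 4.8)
The plan is to strip the real variable off the two–dimensional integral defining $T^+$ by means of Minkowski's integral inequality and Young's convolution inequality, thereby reducing the boundedness of $T^+$ to that of the one–dimensional operator $H_{\alpha,\beta,\gamma}$ applied to the vertical profile $v\mapsto\|f(\cdot+iv)\|_{L^p(\mathbb{R},dx)}$ of $f$.

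First I would make the elementary observation that, writing $z=x+iy$ and $w=u+iv$, one has $z-\bar w=(x-u)+i(y+v)$, so that for fixed heights $y,v>0$ the function $(x,u)\mapsto|z-\bar w|^{-(1+\gamma)}$ depends only on the difference $x-u$; in other words the inner $x$–integral in the definition of $T^+$ is a convolution on $\mathbb{R}$ against the kernel $t\mapsto|t+i(y+v)|^{-(1+\gamma)}$. By Lemma \ref{lem:integkernel}(1) this kernel lies in $L^1(\mathbb{R})$ with $\int_{\mathbb{R}}|t+is|^{-(1+\gamma)}\,dt=B(\tfrac12,\tfrac{\gamma}{2})\,s^{-\gamma}$, provided $\gamma>0$. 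The standing hypothesis that $H_{\alpha,\beta,\gamma}$ is bounded guarantees $\gamma>0$: for $r<\infty$ this is part of Lemma \ref{lem:necessdirect} (which yields $0<b+1+r\alpha<r\gamma$), and in the endpoint cases $r=\infty$ it follows likewise by testing $H_{\alpha,\beta,\gamma}$ on $\chi_{[1,2]}$ and invoking Lemma \ref{lem:betafunctionconditions}; so we may assume $\gamma>0$ throughout.

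Put $F(v)=\|f(\cdot+iv)\|_{L^p(\mathbb{R},dx)}$, so that $\|f\|_{p,q,a}=\|F\|_{L^q_a((0,\infty))}$ by the very definition of the mixed norm space, and note that $|T^+f|\le T^+|f|$ because the kernel of $T^+$ is nonnegative, so we may take $f\ge0$ (all integrands are then nonnegative and the inequalities below hold in $[0,\infty]$). For each fixed $y>0$, applying Minkowski's integral inequality in the $v$–variable (valid since $p\ge1$) and then Young's inequality $\|g\ast k\|_{L^p(\mathbb{R})}\le\|g\|_{L^p(\mathbb{R})}\|k\|_{L^1(\mathbb{R})}$ in the $x$–variable, one gets
\[
\bigl\|T^{+}f(\cdot+iy)\bigr\|_{L^p(dx)}\ \le\ y^{\alpha}\int_0^{\infty}v^{\beta}F(v)\left(\int_{\mathbb{R}}\frac{dt}{|t+i(y+v)|^{1+\gamma}}\right)dv\ =\ B\!\left(\tfrac12,\tfrac{\gamma}{2}\right)y^{\alpha}\int_0^{\infty}\frac{F(v)}{(y+v)^{\gamma}}v^{\beta}\,dv,
\]
and the right-hand side is exactly $B(\tfrac12,\tfrac{\gamma}{2})\,(H_{\alpha,\beta,\gamma}F)(y)$.

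It then only remains to take the $L^r(y^b\,dy)$–norm in $y$ of this pointwise inequality — or the supremum over $y>0$ when $r=\infty$ — and to use the boundedness of $H_{\alpha,\beta,\gamma}$:
\[
\|T^{+}f\|_{p,r,b}\ \le\ B\!\left(\tfrac12,\tfrac{\gamma}{2}\right)\bigl\|H_{\alpha,\beta,\gamma}F\bigr\|_{L^r_b((0,\infty))}\ \le\ B\!\left(\tfrac12,\tfrac{\gamma}{2}\right)\|H_{\alpha,\beta,\gamma}\|_{L^q_a\to L^r_b}\,\|F\|_{L^q_a((0,\infty))}\ =\ B\!\left(\tfrac12,\tfrac{\gamma}{2}\right)\|H_{\alpha,\beta,\gamma}\|_{L^q_a\to L^r_b}\,\|f\|_{p,q,a}.
\]
This proves the lemma, and in fact gives the quantitative bound $\|T^{+}\|_{L^{p,q}_a\to L^{p,r}_b}\le B(\tfrac12,\tfrac{\gamma}{2})\,\|H_{\alpha,\beta,\gamma}\|_{L^q_a\to L^r_b}$, which will be convenient for the sharp-norm theorems. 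The cases $p=1$ and $q$ or $r$ equal to $\infty$ require no modification, since both Young's inequality and the passage to a supremum remain valid verbatim. I do not expect a serious obstacle here: the only subtle points are getting the order of the two inequalities right (Minkowski in $v$ first, then the convolution estimate in $x$) and verifying the $L^1(\mathbb{R})$–integrability of the $x$–kernel, which is precisely why the condition $\gamma>0$ — a consequence of the hypothesis — is needed.
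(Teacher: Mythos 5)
Your proposal is correct and follows essentially the same route as the paper: recognize the inner $x$-integral as a convolution, apply Minkowski's integral inequality in $v$ and Young's inequality in $x$, use Lemma \ref{lem:integkernel}(1) to evaluate the $L^1$-norm of the kernel, and thereby dominate $\|T^+f(\cdot+iy)\|_{L^p(dx)}$ by a constant times $H_{\alpha,\beta,\gamma}$ applied to the vertical profile $v\mapsto\|f_v\|_{L^p}$. Your explicit remark that $\gamma>0$ is forced by the hypothesis (so that the convolution kernel is actually in $L^1(\mathbb{R})$) is a detail the paper leaves implicit, but it does not change the argument.
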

\begin{proof}
For simplicity, we shall write $f(x+iy):= f_y(x).$ Then $$T^+f(x+iy) = \left(T^+f\right)_y(x) = y^{\alpha}\int_0^{\infty}\left(\int_{\mathbb{R}}\frac{f_v(u)}{|(x-u)+i (y+v)|^{1+\gamma}}\mathrm{d}u\right)v^{\beta}\mathrm{d}v.$$
The idea is to prove that for any $f\in L_a^{p,q}(\mathbb{R}_+^2)$, and any $0<y<\infty$, $$\left\|\left(T^+f\right)_y\right\|_{L^p(\mathrm{d}x)}\le C_{\gamma}H_{\alpha,\beta,\gamma}(\|f_v\|_{L^p})(y).$$

Let $K(z)=\frac{1}{z}.$ Then
\begin{eqnarray*}
\left(T^+f\right)_y(x) & = & y^{\alpha}\int_0^{\infty}\left(\int_{\mathbb{R}}|K[(x-u)+i (y+v)]|^{1+\gamma}f_v(u)\mathrm{d}u\right)v^{\beta}\mathrm{d}v\\
& = & y^{\alpha}\int_0^{\infty}\left(\int_{\mathbb{R}}|K_{y+v}(x-u)|^{1+\gamma}f_v(u)\mathrm{d}u\right)v^{\beta}\mathrm{d}v\\
& = & y^{\alpha}\int_0^{\infty}\left(|K_{y+v}|^{1+\gamma}*f_v\right)(x)v^{\beta}\mathrm{d}v.
\end{eqnarray*}
Now, using Minkowski's inequality, Young's inequality and Lemma \ref{lem:integkernel}, we obtain
\begin{eqnarray*}
\left\|\left(T^+f\right)_y\right\|_{L^p(\mathrm{d}x)} & = & \left(\int_{\mathbb{R}}\left|y^{\alpha}\int_0^{\infty}\left(|K_{y+v}|^{1+\gamma}*f_v\right)(x)v^{\beta}\mathrm{d}v\right|^p\mathrm{d}x\right)^{\frac{1}{p}}\\
&\le& \left(\int_{\mathbb{R}}\left(y^{\alpha}\int_0^{\infty}\left|\left(|K_{y+v}|^{1+\gamma}*f_v\right)(x)\right|v^{\beta}\mathrm{d}v\right)^p\mathrm{d}x\right)^{\frac{1}{p}}\\
&=& y^{\alpha}\left(\int_{\mathbb{R}}\left(\int_0^{\infty}\left|\left(|K_{y+v}|^{1+\gamma}*f_v\right)(x)\right|v^{\beta}\mathrm{d}v\right)^p\mathrm{d}x\right)^{\frac{1}{p}}\\
&\le& y^{\alpha}\int_0^{\infty}\left(\int_{\mathbb{R}}\left|\left(|K_{y+v}|^{1+\gamma}*f_v\right)(x)\right|^p\mathrm{d}x\right)^{\frac{1}{p}}v^{\beta}\mathrm{d}v\\
&=& y^{\alpha}\int_0^{\infty}\left\|\,|K_{y+v}|^{1+\gamma}*f_v\right\|_{L^p}v^{\beta}\mathrm{d}v\\
&\le& y^{\alpha}\int_0^{\infty}\left\||K_{y+v}|^{1+\gamma}\|_{L^1}\|f_v\right\|_{L^p}v^{\beta}\mathrm{d}v\\
&=&C_{\gamma} y^{\alpha}\int_0^{\infty}(y+v)^{-\gamma}\left\|f_v\right\|_{L^p}v^{\beta}\mathrm{d}v\\
&=&C_{\gamma} y^{\alpha}\int_0^{\infty}\frac{\left\|f_v\right\|_{L^p}}{(y+v)^{\gamma}}v^{\beta}\mathrm{d}v\\
&=&C_{\gamma}H_{\alpha,\beta,\gamma}(\|f_v\|_{L^p})(y).
\end{eqnarray*}
That is $\left\|\left(T^+f\right)_y\right\|_{L^p(\mathrm{d}x)}\le C_{\gamma}H_{\alpha,\beta,\gamma}(\|f_v\|_{L^p})(y)$ as we wanted. Thus the boundedness of $H_{\alpha,\beta,\gamma}$ from $L_a^{q}((0,\infty))$ to $L_b^{r}((0,\infty))$ implies the boundedness of  $T^+$ from $L_a^{p,q}(\mathbb{R}_+^2)$ to $L_b^{p,r}(\mathbb{R}_+^2)$. The proof is complete.
\end{proof}
It follows from the above lemma, Theorem \ref{thm:main1} and Theorem \ref{thm:main2pinfty} that the following hold.
\begin{lemma}
\label{b11}
Suppose that $a,b>-1,1\le p<\infty$ and $1\le q\le r \le\infty.$ Assume that $\alpha,\beta$ and $\gamma$ are real numbers satisfying the conditions in Theorem \ref{thm:main2}, Theorem \ref{thm:main3}, Theorem \ref{thm:main4}, Theorem \ref{thm:main5}, Theorem \ref{thm:main6} or Theorem \ref{thm:main7}. Then the  operator $T^+$ is bounded from $L_a^{p,q}(\mathbb{R}_+^2)$ to $L_b^{p,r}(\mathbb{R}_+^2)$.
\end{lemma}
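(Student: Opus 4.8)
The plan is to obtain the statement directly from Lemma \ref{b1}, which reduces the boundedness of $T^+$ from $L_a^{p,q}(\mathbb{R}_+^2)$ to $L_b^{p,r}(\mathbb{R}_+^2)$ to the boundedness of $H_{\alpha,\beta,\gamma}$ from $L_a^{q}((0,\infty))$ to $L_b^{r}((0,\infty))$. First I would observe that this reduction is available for every $1\le q\le r\le\infty$, including the endpoints $q=1$ and $q=r=\infty$: the proof of Lemma \ref{b1} first produces the pointwise estimate $\left\|(T^+f)_y\right\|_{L^p(\mathrm{d}x)}\le C_\gamma H_{\alpha,\beta,\gamma}(\|f_v\|_{L^p})(y)$ and only afterwards takes the norm in $L_b^r(\mathrm{d}y)$, so the values of $q$ and $r$ are immaterial to that step. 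It therefore suffices, in each of Theorems \ref{thm:main2}--\ref{thm:main7}, to identify the relevant values of $a,b,q,r$ and to check that the stated conditions on $\alpha,\beta,\gamma$ are exactly those ensuring the boundedness of the corresponding $H_{\alpha,\beta,\gamma}$.

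The cases in which $H_{\alpha,\beta,\gamma}$ lands in a genuine space $L_b^r$ with $r<\infty$ are read off from Theorem \ref{thm:main1}. For Theorem \ref{thm:main2} (here $1<q\le r<\infty$ and $a,b>-1$) the hypotheses are exactly condition (b) of Theorem \ref{thm:main1} with $(p,q)$ replaced by $(q,r)$, so $H_{\alpha,\beta,\gamma}\colon L_a^q\to L_b^r$ is bounded. For Theorem \ref{thm:main4} (source $L^{p,1}$, so $q=1$, $a=0$, and $1<r<\infty$) substituting $a=0$ and $p=1$ in condition (b) of Theorem \ref{thm:main1} turns the identity into $\gamma=\alpha+\beta+\tfrac{b+1}{r}$ and the two inequalities into $\gamma>\beta>0$, i.e. exactly \eqref{eq:main41}--\eqref{eq:main42}. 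For Theorem \ref{thm:main6} (source and target both $L^{p,1}$, so $q=r=1$ and $a=b=0$) the same substitution gives $\gamma=\alpha+\beta+1$ together with $\gamma>\beta>0$, that is \eqref{eq:main61}--\eqref{eq:main62}. In each of these three cases Theorem \ref{thm:main1} supplies the boundedness of $H_{\alpha,\beta,\gamma}$ and Lemma \ref{b1} concludes.

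It remains to treat the cases whose target is an $L^{p,\infty}$ space, so that $r=\infty$ and no weight $b$ is present. For Theorem \ref{thm:main3} ($1<q<\infty$) the conditions \eqref{eq:main31}--\eqref{eq:main32} are condition (b) of Theorem \ref{thm:main2pinfty} with $p$ replaced by $q$, hence $H_{\alpha,\beta,\gamma}\colon L_a^q\to L^\infty$ is bounded. For Theorem \ref{thm:main7} (source also $L^{p,\infty}$, so $q=r=\infty$ and $a=b=0$) the conditions $\gamma=\alpha+\beta+1$, $\alpha>0$, $\beta>-1$ are precisely those of Theorem \ref{thm:main3inftyinfty} (equivalently, those under which Lemma \ref{lem:okikiolulimitcaseinfty} applies to the kernel $\tfrac{x^\alpha y^\beta}{(x+y)^\gamma}$), so $H_{\alpha,\beta,\gamma}$ is bounded on $L^\infty$. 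The one instance not already covered by the earlier theorems is Theorem \ref{thm:main5}, where $q=1$, $r=\infty$ and one needs $H_{\alpha,\beta,\gamma}\colon L^1((0,\infty))\to L^\infty((0,\infty))$ under \eqref{eq:main51}--\eqref{eq:main52}; this I would check directly. Indeed $H_{\alpha,\beta,\gamma}$ maps $L^1$ into $L^\infty$ with norm $\sup_{x,y>0}\tfrac{x^\alpha y^\beta}{(x+y)^\gamma}$; by the homogeneity $\tfrac{(tx)^\alpha(ty)^\beta}{(tx+ty)^\gamma}=t^{\alpha+\beta-\gamma}\tfrac{x^\alpha y^\beta}{(x+y)^\gamma}$ this supremum is finite only if $\gamma=\alpha+\beta$, and then, writing $s=\tfrac{x}{x+y}\in(0,1)$, it equals $\sup_{0<s<1}s^\alpha(1-s)^\beta$, which is finite because $\alpha,\beta\ge 0$. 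In every case Lemma \ref{b1} with the appropriate $q$ and $r$ then yields the asserted boundedness of $T^+$. No step is a genuine obstacle; the argument is essentially bookkeeping, the only two points deserving a moment's care being the legitimacy of using Lemma \ref{b1} at the endpoint indices $q=1$ and $q=r=\infty$, which is justified above, and the short $L^1\to L^\infty$ estimate for $H_{\alpha,\beta,\gamma}$ needed in Theorem \ref{thm:main5}.
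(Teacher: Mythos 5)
Your proposal is correct and follows essentially the same route as the paper, which disposes of this lemma in one line by combining Lemma \ref{b1} with the boundedness results for $H_{\alpha,\beta,\gamma}$ (Theorem \ref{thm:main1} and Theorem \ref{thm:main2pinfty}). Your case-by-case verification is accurate, and your explicit treatment of the $L^1\to L^\infty$ instance needed for Theorem \ref{thm:main5} (which is not literally covered by the two cited Hilbert-operator theorems) supplies a detail the paper glosses over.
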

\subsection{Necessity for Boundedness of $T^+_{\alpha,\beta,\gamma}$} We start by the following lemma.
\begin{lemma}
\label{b2}
Let $1<p<\infty$, $1<q\le r<\infty$ and $a,b>-1$. Assume that $T^+_{\alpha,\beta,\gamma}$ is bounded from $L^{p,q}_a(\mathbb{R}^2_+)$ to $L^{p,r}_b(\mathbb{R}^2_+).$ Then the parameters satisfy $$\gamma = \alpha+\beta+1-\frac{a+1}{q}+\frac{b+1}{r}\quad\mbox{and}\quad -q(\gamma-\beta-1)<a+1<q(\beta+1).$$ 
\end{lemma}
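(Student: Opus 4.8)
The plan is to adapt the scheme of Lemmas~\ref{lem:necessdirect}--\ref{lem:necessadjoint} to the half plane: dilations of $\mathbb{R}_+^2$ will give the equality, characteristic functions of boxes will give the inequalities, and the extra ingredient is that the adjoint of $T^+$ is again an operator of the same family, so the box test can be run on it as well. For the equality, given $R>0$ and $f\in C_c^\infty(\mathbb{R}_+^2)$ I set $f_R(w)=f(Rw)$; a change of variables gives $\|f_R\|_{p,q,a}=R^{-1/p-(a+1)/q}\|f\|_{p,q,a}$, while the homogeneity $|z-\bar w|=R^{-1}|Rz-\overline{Rw}|$ together with the factors $(\Im z)^\alpha$, $(\Im w)^\beta$, $dV(w)$ yields $T^+f_R(z)=R^{\gamma-\alpha-\beta-1}(T^+f)(Rz)$, hence $\|T^+f_R\|_{p,r,b}=R^{\gamma-\alpha-\beta-1-1/p-(b+1)/r}\|T^+f\|_{p,r,b}$. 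Inserting these into $\|T^+f_R\|_{p,r,b}\le C\|f_R\|_{p,q,a}$ and letting $R\to0$ and $R\to\infty$ forces the power of $R$ to vanish, which is exactly $\gamma=\alpha+\beta+1-\frac{a+1}{q}+\frac{b+1}{r}$.

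\emph{The left inequality.} I would take $f=\chi_S$ with $S=\{u+iv:0<u,v<1\}$ (non-smooth test functions being legitimate here by monotone approximation from below by $C_c^\infty$ functions and Fatou's lemma, the kernel of $T^+$ being positive). Finiteness of $T^+\chi_S$ forces $\beta>-1$, the decay $T^+\chi_S(x+iy)\simeq y^\alpha|x+iy|^{-(1+\gamma)}$ for large $|x|$ forces $(1+\gamma)p>1$, and for $x$ in a fixed compact subinterval of $(0,1)$ one has $\int_0^1|x-u+i(y+v)|^{-(1+\gamma)}\,du\gtrsim1$ uniformly in $y,v$; hence, when $\gamma\le\beta+1$, $\int_{\mathbb{R}}|T^+\chi_S(x+iy)|^p\,dx\simeq y^{\alpha p}$ on $(0,1)$, so convergence of $\int_0^1 y^{\alpha r+b}\,dy$ is necessary, which gives $-r\alpha<b+1$. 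Under the relation just proven this is equivalent to $-q(\gamma-\beta-1)<a+1$ by the algebra of Lemma~\ref{lem:equivcond} (with $(p,q)$ replaced by $(q,r)$), and if $\gamma>\beta+1$ the latter holds trivially since then its left side is negative.

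\emph{The right inequality, via the adjoint.} With the pairings $\langle u,v\rangle_\nu=\int_{\mathbb{R}_+^2}uv\,dV_\nu$ and the identity $|z-\bar w|=|w-\bar z|$, a direct computation shows that the adjoint of $T^+_{\alpha,\beta,\gamma}\colon L_a^{p,q}\to L_b^{p,r}$ is $T^+_{\beta-a,\,\alpha+b,\,\gamma}\colon L_b^{p',r'}\to L_a^{p',q'}$, which is therefore bounded, with $1<r'\le q'<\infty$. Running on this operator the very same $y\to0$ box-function analysis as above yields $-q'(\beta-a)<a+1$; since $\tfrac1{q'}=1-\tfrac1q$ this rearranges precisely to $\tfrac{a+1}{q}<\beta+1$, i.e. $a+1<q(\beta+1)$. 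Combined with the left inequality, this is the assertion of the lemma.

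The step I expect to be the main obstacle is the inequality $a+1<q(\beta+1)$. A single box test function detects only the strictly weaker condition $\alpha+\tfrac{b+1}{r}<\gamma+\tfrac1{p'}$, because taking the $L^p$-norm in $x$ spreads the near-singularity at $y=0$ over a long $x$-interval and thereby underestimates its weight; the way around this is to notice that $T^+$ has a dual partner inside the same family and to carry out the box test there too. Since that box test produces a \emph{strict} inequality, the endpoint $a+1=q(\beta+1)$ is ruled out automatically, with no need for a logarithmic refinement.
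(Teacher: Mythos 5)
Your proposal is correct and follows essentially the same route as the paper: dilations $f_R(z)=f(Rz)$ force the homogeneity relation, a characteristic function of a box tested near $y=0$ gives $-r\alpha<b+1$ (equivalent to the left inequality under that relation), and the right inequality is obtained by running the same box test on the adjoint $(T^+)^*g(w)=v^{\beta-a}\int g(z)\,|z-\bar w|^{-(1+\gamma)}y^{\alpha+b}\,dV(z)$, bounded from $L_b^{p',r'}$ to $L_a^{p',q'}$. The only cosmetic differences are your choice of a box touching the boundary versus the paper's $\chi_{[-1/4,1/4]}(x)\chi_{[1,2]}(y)$, and your explicit identification of the adjoint as $T^+_{\beta-a,\alpha+b,\gamma}$.
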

\begin{proof}
Let $R>0$ and set $f_R(z):=f(Rz).$ Then it is easily seen that if $f\in L^{p,q}_a(\mathbb{R}_+^2)$, then $f_R\in L^{p,q}_a(\mathbb{R}_+^2)$ and $$\|f_R\|_{L^{p,q}_a}=R^{-\frac{a+1}{q}-\frac{1}{p}}\|f\|_{l^{p,q}_a}.$$ One easily checks that $$\left(T^+f_R\right)(z) = R^{\gamma-\beta-\alpha-1}T^+f(Rz)$$ and thus $$\|T^+f_R\|_{L^{p,r}_b} = R^{\gamma-\beta-\alpha-1-\frac{1}{p}-\frac{b+1}{r}}\|T^+f\|_{L^{p,r}_b}.$$ Now that $T^+$ is bounded implies that there exists a constant $C>0$ such that for any $f\in L^{p,r}_a(\mathbb{R}_+^2)$, $$\|T^+f_R\|_{L^{p,r}_b}\le C\|f_R\|_{L_a^{p,r}}.$$ Therefore $$R^{\gamma-\beta-\alpha-1-\frac{1}{p}-\frac{b+1}{r}}\|T^+f\|_{L^{p,r}_b}\le CR^{-\frac{a+1}{q}-\frac{1}{p}}\|f\|_{L^{p,q}_a}$$ or equivalently, $$R^{\gamma-\beta-\alpha-1-\frac{b+1}{r}+\frac{a+1}{q}}\|T^+f\|_{L^{p,r}_b}\le C\|f\|_{L^{p,q}_a}.$$ As this holds for any $f\in L^{p,q}_a(\mathbb{R}_+^2)$ and any $R>0$, we necessarily have  that $$\gamma-\beta-\alpha-1-\frac{b+1}{r}+\frac{a+1}{q}=0.$$ That is $\gamma=\beta+\alpha+1+\frac{b+1}{r}-\frac{a+1}{q}$.

To check the other condition, let us set $$f_y(x) = \chi_{[-\frac{1}{4},\frac{1}{4}]}(x)\chi_{[1,2]}(y).$$ For $-\frac{1}{4}\le x\le \frac{1}{4}$ and $0<y\le 1,$ it is easy to check that $$\int_{-\frac{1}{4}}^{\frac{1}{4}}\!\frac{\mathrm{d}u}{[(x-u)^2+(y+v)^2]^{\frac{1+\gamma}{2}}}\gtrsim (y+v)^{-\gamma}.$$ It follows that for $-\frac{1}{4}\le x\le \frac{1}{4}$ and $0<y\le 1,$ $$(T^+f)_y(x) = y^{\alpha}\int_1^2\int_{-\frac{1}{4}}^{\frac{1}{4}}\!\frac{v^{\beta}\mathrm{d}u\mathrm{d}v}{[(x-u)^2+(y+v)^2]^{\frac{1+\gamma}{2}}}\gtrsim\frac{y^{\alpha}}{(1+y)^{\gamma}}.$$ Hence, as $T^+$ is bounded from $L^{p,q}_a(\mathbb{R}_+^2)$ to $L^{p,r}_b(\mathbb{R}_+^2),$ $$\int_0^1\left(\int_{-\frac{1}{4}}^{\frac{1}{4}}\!\!|T^+f(x+iy)|^p \mathrm{d}x\right)^{\frac{r}{p}}y^b\mathrm{d}y\le\|T^+f\|^r_{L^{p,r}_b}\le C\|f\|^r_{L^{p,q}_a}<\infty.$$ This implies that $$\int_0^1\frac{y^{r\alpha}}{(1+y)^{r\gamma}}y^b\mathrm{d}y< \infty$$ and as $$\int_0^1y^{r\alpha + b}\mathrm{d}y\lesssim\int_0^1\frac{y^{r\alpha}}{(1+y)^{\gamma}}y^b\mathrm{d}y,$$ we should have $$\int_0^1y^{r\alpha + b}\mathrm{d}y<\infty$$ and this is possible only if $r\alpha+b+1>0,$ that is $-r\alpha<b+1.$ Using that $\gamma=\beta+\alpha+1+\frac{b+1}{r}-\frac{a+1}{q}$, we obtain that the latter is equivalent to $-q(\gamma-\beta-1)<a+1$. This gives us the left inequality in the second condition. To prove the right inequality, we observe that the boundedness of $T^+$ from $L^{p,q}_a(\mathbb{R}_+^2)$ to $L^{p,r}_b(\mathbb{R}_+^2)$ is equivalent to the boundedness of its adjoint $(T^+)^*$ from $L^{p',r'}_b(\mathbb{R}_+^2)$ to $L^{p',q'}_a(\mathbb{R}_+^2)$. One easily checks that $$(T^+)^*g(u+iv) = v^{\beta-a}\int_{\mathbb{R}^2_+}\frac{g(x+iy)}{|(x-u)+i(y+v)|^{1+\gamma}}y^{\alpha +b}\mathrm{d}y.$$ As above, we take $$g_u(v) = \chi_{[-\frac{1}{4},\frac{1}{4}]}(u)\chi_{[1,2]}(v)$$ and obtain for $-\frac{1}{4}\le u\le \frac{1}{4}$ and $0<v\le 1,$ $$(T^+)^*g_v(u)\gtrsim\frac{v^{\beta-a}}{(1+v)^{\gamma}}$$ and so following the lines of the proof for the necessity of the left inequality, we are led to $$\int_0^1\!\!\!v^{q'(\beta-a)+a}\mathrm{d}v<\infty$$ and this holds only if $q'(\beta-a)+a+1>0$ or equivalently, $a+1<q(\beta+1).$ 
\end{proof}
We next prove the following.
\begin{lemma}
\label{b3}
Let $1<p,q<\infty$ and $a>-1$. Assume that $T^+_{\alpha,\beta,\gamma}$ is bounded from $L^{p,q}_a(\mathbb{R}^2_+)$ to $L^{p,\infty}(\mathbb{R}^2_+).$ Then the parameters satisfy $$\gamma = \alpha+\beta+1-\frac{a+1}{q}\quad\mbox{and}\quad -q(\gamma-\beta-1)<a+1<q(\beta+1).$$ 
\end{lemma}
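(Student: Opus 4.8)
The plan is to follow the pattern of Lemma~\ref{b2}: a dilation fixes the exponent identity and test functions fix the inequalities, the only real difference being that $L^{p,\infty}$ is an unweighted supremum in the imaginary part, so the \emph{strict} bound $\alpha>0$ must be obtained by a separate argument.

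First I would run the scaling argument. With $f_R(z)=f(Rz)$ one has, exactly as in Lemma~\ref{b2}, $\|f_R\|_{L^{p,q}_a}=R^{-1/p-(a+1)/q}\|f\|_{L^{p,q}_a}$ and $(T^+f_R)(z)=R^{\gamma-\alpha-\beta-1}(T^+f)(Rz)$; since the target carries no weight and $x$ scales by $R$, this gives $\|T^+f_R\|_{L^{p,\infty}}=R^{\gamma-\alpha-\beta-1-1/p}\|T^+f\|_{L^{p,\infty}}$, which is compatible with boundedness for all $R>0$ only if $\gamma=\alpha+\beta+1-\frac{a+1}{q}$. Testing next on $f(x+iy)=\chi_{[-1/4,1/4]}(x)\chi_{[1,2]}(y)$ (replaced, if desired, by a $C_c^\infty$ minorant together with Fatou) one gets, just as in Lemma~\ref{b2}, $T^+f(x+iy)\gtrsim y^{\alpha}(1+y)^{-(1+\gamma)}$ for $x\in[-1/4,1/4]$, whence $\|T^+f\|_{L^{p,\infty}}\gtrsim\sup_{0<y\le1}y^{\alpha}$ and so $\alpha\ge0$.

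The core of the argument is to upgrade this to $a+1<q(\beta+1)$ and the \emph{strict} $\alpha>0$ (which, under the exponent identity, is the inequality $-q(\gamma-\beta-1)<a+1$). Here I would freeze the imaginary part of the target: boundedness of $T^+$ implies that for every $y_0>0$ the sectional operator $\mathcal{T}_{y_0}f:=T^+f(\cdot+iy_0)$ is bounded from $L^{p,q}_a(\mathbb{R}_+^2)$ into $L^{p}(\mathbb{R},dx)$ with $\|\mathcal{T}_{y_0}\|\le\|T^+\|_{L^{p,q}_a\to L^{p,\infty}}$, \emph{uniformly} in $y_0$. Dualising, $\mathcal{T}_{y_0}^{*}:L^{p'}(\mathbb{R})\to L^{p',q'}_a(\mathbb{R}_+^2)$ has the same norm bound, and a short computation gives
$$\mathcal{T}_{y_0}^{*}\phi(u+iv)=y_0^{\alpha}\,v^{\beta-a}\,(\phi*\Phi_{y_0+v})(u),\qquad \Phi_c(t):=(t^2+c^2)^{-(1+\gamma)/2}.$$
Fix $0\le\phi\in C_c^\infty(\mathbb{R})$ with $\phi\not\equiv0$. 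For each fixed $y_0$ the quantity $\|\phi*\Phi_{y_0+v}\|_{L^{p'}}$ tends to a finite positive limit as $v\to0^{+}$, so finiteness of
$$\|\mathcal{T}_{y_0}^{*}\phi\|_{L^{p',q'}_a}^{q'}=y_0^{\alpha q'}\int_0^{\infty}v^{(\beta-a)q'+a}\,\|\phi*\Phi_{y_0+v}\|_{L^{p'}}^{q'}\,dv$$
already forces $(\beta-a)q'+a>-1$, i.e. $a+1<q(\beta+1)$; with $\alpha\ge0$ and the exponent identity this yields $\gamma>0$. Now, since $\gamma>0$, the singularity of $\Phi_c$ at the origin gives $\phi*\Phi_c\gtrsim c^{-\gamma}\phi$ on $\mathrm{supp}\,\phi$ for $0<c\le1$, hence $\|\phi*\Phi_c\|_{L^{p'}}\gtrsim c^{-\gamma}$, so that
$$\|\mathcal{T}_{y_0}^{*}\phi\|_{L^{p',q'}_a}^{q'}\gtrsim y_0^{\alpha q'}\int_0^{1}\frac{v^{(\beta-a)q'+a}}{(y_0+v)^{\gamma q'}}\,dv.$$
Writing $m=(\beta-a)q'+a>-1$ and $n=\gamma q'>0$, the exponent identity gives $m-n+1=-\alpha q'$, so as $y_0\to0^{+}$ the last integral behaves like $y_0^{-\alpha q'}$ when $\alpha>0$ and like $\log(1/y_0)$ when $\alpha=0$. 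In the case $\alpha=0$ this would give $\|\mathcal{T}_{y_0}^{*}\phi\|_{L^{p',q'}_a}\gtrsim(\log(1/y_0))^{1/q'}\to\infty$, contradicting the uniform bound $\|\mathcal{T}_{y_0}^{*}\phi\|_{L^{p',q'}_a}\le\|T^+\|\,\|\phi\|_{L^{p'}}$; with $\alpha\ge0$ this forces $\alpha>0$. Assembling the three conclusions yields $\gamma=\alpha+\beta+1-\frac{a+1}{q}$ together with $-q(\gamma-\beta-1)<a+1<q(\beta+1)$.

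I expect the last step to be the only genuine obstacle: both the scaling and any single bump detect only $\alpha\ge0$, and the strict inequality surfaces only through the logarithmic divergence produced by letting the frozen height tend to $0$. (Since $L^{p,\infty}(\mathbb{R}_+^2)$ is the dual of $L^{p',1}(\mathbb{R}_+^2)$, one could instead transpose and reduce the claim to the boundedness of $T^+_{\beta-a,\alpha,\gamma}$ from $L^{p',1}$ into $L^{p',q'}_a$, i.e. to Theorem~\ref{thm:main4}; but that statement carries the identical endpoint difficulty, so nothing is really gained.)
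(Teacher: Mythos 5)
Your proposal is correct, and its first two steps (the dilation $f_R(z)=f(Rz)$ forcing $\gamma=\alpha+\beta+1-\frac{a+1}{q}$, and the box function $\chi_{[-1/4,1/4]}\otimes\chi_{[1,2]}$) coincide with the paper's. Where you genuinely diverge is in how the two inequalities are extracted. The paper dualizes the full operator, noting that boundedness of $T^+$ from $L^{p,q}_a$ to $L^{p,\infty}$ gives boundedness of $(T^+)^*$ from $L^{p',1}$ to $L^{p',q'}_a$, and tests the adjoint with the same box to get $a+1<q(\beta+1)$; for the left inequality it simply observes $\sup_{0<y<1}y^{\alpha}<\infty$ and asserts that this ``clearly implies'' $\alpha>0$ --- which, taken literally, only yields $\alpha\ge 0$, exactly the defect you flag. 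Your route --- freezing the height, dualizing the sectional operator $\mathcal{T}_{y_0}$ against a one-dimensional bump, and then letting $y_0\to 0^+$ to produce the logarithmic divergence when $\alpha=0$ --- recovers $a+1<q(\beta+1)$ by essentially the same duality mechanism as the paper, and in addition supplies a rigorous quantitative argument for the strict inequality $\alpha>0$ that the paper's necessity proof glosses over; it is also consistent with how the strictness is obtained for the Hilbert operator in Theorem \ref{thm:main2pinfty}. Two small repairs to your write-up: the pointwise claim $\phi*\Phi_c\gtrsim c^{-\gamma}\phi$ on all of $\operatorname{supp}\phi$ with a constant uniform in $c$ is delicate near the edge of the support, but all you need is $\|\phi*\Phi_c\|_{L^{p'}}\gtrsim c^{-\gamma}$, which follows by restricting to an interval where $\phi\ge\delta>0$; and in the step forcing $a+1<q(\beta+1)$ you should replace ``$\|\phi*\Phi_{y_0+v}\|_{L^{p'}}$ tends to a finite positive limit'' (finiteness is not yet known if $\gamma\le 0$) by the uniform positive lower bound $\|\phi*\Phi_{y_0+v}\|_{L^{p'}}\ge c_0>0$ for $0<v\le 1$, which is immediate since $\Phi_{y_0+v}\ge\Phi_{y_0+1}$ pointwise. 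With these cosmetic fixes the argument is complete.
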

\begin{proof}
We write again $f_R(z)=f(Rz)$. Following the lines of the proof of the previous lemma, we obtain $$\|T^+f_R\|_{L^{p,\infty}} = R^{\gamma-\beta-\alpha-1-\frac{1}{p}-\frac{b+1}{r}}\|T^+f\|_{L^{p,\infty}}.$$ Now that $T^+$ is bounded implies that for some constant $C>0$, $$\|T^+f_R\|_{L^{p,\infty}}\le C\|f_R\|_{L^{p,r}_a}\quad\mbox{for all $f\in L^{p,r}_a(\mathbb{R}_+^2)$.}$$ Therefore $$R^{\gamma-\beta-\alpha-1-\frac{1}{p}}\|T^+f\|_{L^{p,\infty}}\le CR^{-\frac{a+1}{q}-\frac{1}{p}}\|f\|_{L^{p,q}_a}$$ or equivalently, $$R^{\gamma-\beta-\alpha-1+\frac{a+1}{q}}\|T^+f\|_{L^{p,\infty}}\le C\|f\|_{L^{p,q}_a}.$$ As this holds for any $f\in L^{p,q}_a(\mathbb{R}_+^2)$ and any $R>0$, we necessarily have  that $\gamma-\beta-\alpha-1+\frac{a+1}{q}=0$. That is $\gamma=\beta+\alpha+1-\frac{a+1}{q}$.

To obtain the other condition, we proceed as in the proof of the previous lemma. We take again $f(x+iy)=\chi_{[-\frac{1}{4},\frac{1}{4}]}(x)\chi_{[1,2]}$ and obtain from the boundedness of $T^+$ that there is a constant $C>0$ such that
$$\sup_{0<y<1}\left(\int_{-1/4}^{1/4}\left|T^+f(x+iy)\right|^p\mathrm{d}x\right)^{1/p}\le \|T^+f\|_{L^{p,\infty}}\le C\|f\|_{L_a^{p,q}}<\infty.$$
Hence $$\sup_{0<y<1}y^\alpha \lesssim \sup_{0<y<1}\frac{y^\alpha}{(1+y)^\gamma}<\infty$$
and this clearly implies that $\alpha>0$. From the equality $\gamma=\beta+\alpha+1-\frac{a+1}{q}$, it is easy to see that $\alpha>0$ is the same as $-q(\gamma-\beta-1)<a+1$. This give the left inequality in the second condition. To prove the right inequality, we observe that the boundedness of $T^+$ from $L^{p,q}_a(\mathbb{R}_+^2)$ to $L^{p,\infty}(\mathbb{R}_+^2)$ implies the boundedness of its adjoint $(T^+)^*$ from $L^{p',1}$ to $L^{p',q'}_a$ and we easily check that $$(T^+)^*g(u+iv) = v^{\beta-a}\int_{\mathbb{R}^2_+}\frac{g(x+iy)}{|(x-u)+i(y+v)|^{1+\gamma}}y^{\alpha}\mathrm{d}y.$$ The right inequality is then obtained following the lines of the last part of the proof of the lemma just above. The proof is complete.
\end{proof}
\begin{lemma}
\label{b4}
Let $1<p<\infty$. Assume that $T^+_{\alpha,\beta,\gamma}$ is bounded from $L^{p,1}(\mathbb{R}^2_+)$ to $L^{p,\infty}(\mathbb{R}^2_+).$ Then the parameters satisfy $$\gamma = \alpha+\beta\quad\mbox{and}\quad \alpha,\beta>0.$$ 
\end{lemma}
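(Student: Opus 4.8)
The proof follows the same scheme as Lemmas \ref{b2} and \ref{b3}, now applied to the double endpoint pair $L^{p,1}(\mathbb{R}_+^2)\to L^{p,\infty}(\mathbb{R}_+^2)$. \emph{Step 1 (the relation $\gamma=\alpha+\beta$).} For $R>0$ put $f_R(z)=f(Rz)$. A change of variables gives, exactly as in the proof of Lemma \ref{b2}, the homogeneity $(T^+f_R)(z)=R^{\gamma-\alpha-\beta-1}(T^+f)(Rz)$, together with the scaling identities $\|f_R\|_{p,1}=R^{-1-\frac1p}\|f\|_{p,1}$ and $\|T^+f_R\|_{p,\infty}=R^{\gamma-\alpha-\beta-1-\frac1p}\|T^+f\|_{p,\infty}$. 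Inserting these into the boundedness estimate $\|T^+f_R\|_{p,\infty}\le C\|f_R\|_{p,1}$ and multiplying through by $R^{1+\frac1p}$, we obtain $R^{\gamma-\alpha-\beta}\|T^+f\|_{p,\infty}\le C\|f\|_{p,1}$ for every $R>0$ and every admissible $f$; since $T^+$ is a nontrivial positive operator one can pick $f$ with $\|T^+f\|_{p,\infty}>0$, and then letting $R\to0$ and $R\to\infty$ forces the exponent to vanish, i.e. $\gamma=\alpha+\beta$.

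\emph{Step 2 (the condition $\alpha>0$).} I would test with $f(x+iy)=\chi_{[-1/4,1/4]}(x)\chi_{[1,2]}(y)$, which lies in $L^{p,1}(\mathbb{R}_+^2)$. For $|x|\le\tfrac14$ and $0<y\le1$ one has $y+v\in(1,3)$ whenever $v\in[1,2]$, so both the inner integral $\int_{-1/4}^{1/4}[(x-u)^2+(y+v)^2]^{-(1+\gamma)/2}\,du$ and $\int_1^2 v^\beta(\cdots)\,dv$ are bounded above and below by positive constants (independently of $x,y$); hence $(T^+f)(x+iy)\gtrsim y^\alpha(1+y)^{-\gamma}$ on that region. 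Since $T^+f\in L^{p,\infty}(\mathbb{R}_+^2)$ by hypothesis, restricting the defining supremum to $0<y<1$ and the $x$-integral to $[-1/4,1/4]$ yields $\sup_{0<y<1}y^\alpha\lesssim\sup_{0<y<1}y^\alpha(1+y)^{-\gamma}<\infty$, which, exactly as in the corresponding step of Lemma \ref{b3}, gives $\alpha>0$.

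\emph{Step 3 (the condition $\beta>0$, by duality).} The boundedness of $T^+=T^+_{\alpha,\beta,\gamma}$ from $L^{p,1}$ to $L^{p,\infty}$ is equivalent to that of its adjoint from $L^{p',1}$ to $L^{p',\infty}$ (the same duality used in the proof of Lemma \ref{b3}, now with $q=1$, $q'=\infty$), and a direct computation shows $(T^+_{\alpha,\beta,\gamma})^*=T^+_{\beta,\alpha,\gamma}$, i.e. $(T^+)^*g(u+iv)=v^{\beta}\int_{\mathbb{R}_+^2}\frac{g(x+iy)}{|(x-u)+i(y+v)|^{1+\gamma}}y^{\alpha}\,dx\,dy$. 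Applying Steps 1--2 to $T^+_{\beta,\alpha,\gamma}$, whose leading parameter is now $\beta$, we get $\beta>0$ (and recover $\gamma=\alpha+\beta$); equivalently, one may test $(T^+)^*$ directly on $g(u+iv)=\chi_{[-1/4,1/4]}(u)\chi_{[1,2]}(v)$, find $(T^+)^*g(u+iv)\gtrsim v^\beta(1+v)^{-\gamma}$ for $0<v<1$, and conclude $\sup_{0<v<1}v^\beta<\infty$.

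\emph{Expected main obstacle.} The delicate point is that this is a genuine double endpoint: there is no $y$-integrability on the target side and only an $L^1$ integral in $y$ on the source side, so neither the Schur/Okikiolu-type tests of Section 3 nor the reduction of Lemma \ref{b1} apply in this direction. Everything has to be extracted from the exact dilation homogeneity of $T^+$ and from sharp two-sided estimates for a single explicit compactly supported test function and for its adjoint analogue; in particular the passage to the adjoint must be handled with the care appropriate to $L^\infty$-type targets, precisely as was done for Lemma \ref{b3}. The bookkeeping for the strict inequalities $\alpha,\beta>0$ — rather than the a priori weaker $\alpha,\beta\ge0$ coming straight from $\sup_{0<y<1}y^\alpha<\infty$ — is the one step where one must be attentive, using that $\gamma=\alpha+\beta$ is already in hand.
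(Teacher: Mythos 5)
Your three steps reproduce the paper's route exactly: dilation homogeneity to force $\gamma=\alpha+\beta$, the indicator test function $\chi_{[-1/4,1/4]}(x)\chi_{[1,2]}(y)$ to control $\alpha$, and the adjoint identity $(T^+_{\alpha,\beta,\gamma})^*=T^+_{\beta,\alpha,\gamma}$ (with the $L^{p,1}$--$L^{p',\infty}$ duality) to transfer the conclusion to $\beta$. The paper's own proof is nothing more than this adjoint identity plus the instruction to follow Lemma \ref{b2}, so there is no methodological divergence, and your Step 1 and your derivations of $\alpha\ge 0$ and $\beta\ge 0$ are correct.

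The genuine gap is the one you flag in your last paragraph and never close: the test-function estimate only yields $\sup_{0<y<1}y^{\alpha}<\infty$, hence $\alpha\ge 0$, not $\alpha>0$, and nothing in your argument upgrades this to a strict inequality. (The paper inherits the same defect: in Lemma \ref{b2} strictness comes from the convergence of $\int_0^1 y^{r\alpha+b}\,\mathrm{d}y$, but here the target norm is a supremum and no such integral is available; the corresponding step of Lemma \ref{b3} simply asserts that $\sup_{0<y<1}y^\alpha<\infty$ ``clearly implies $\alpha>0$,'' which it does not.) Worse, in this double-endpoint case the strictness does not appear to be salvageable: if $\alpha=0$ and $\beta=\gamma>0$, then $H_{0,\gamma,\gamma}$ maps $L^{1}((0,\infty))$ into $L^{\infty}((0,\infty))$ with norm at most $\sup_{v,y>0}v^{\gamma}(y+v)^{-\gamma}=1$, and Lemma \ref{b1} (stated for $1\le q\le r\le\infty$) then gives boundedness of $T^+_{0,\gamma,\gamma}$ from $L^{p,1}(\mathbb{R}_+^2)$ to $L^{p,\infty}(\mathbb{R}_+^2)$; by the adjoint identity the same happens for $\beta=0$, $\alpha=\gamma>0$. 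So the endpoints $\alpha=0$ and $\beta=0$ are in fact admissible, and no refinement of your test functions can exclude them. You should either weaken the conclusion to $\alpha,\beta\ge 0$ with $\gamma=\alpha+\beta>0$, or produce a genuinely new argument ruling out the endpoint; as written, your proposal, like the paper, asserts strict inequalities that the argument does not (and seemingly cannot) deliver.
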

\begin{proof}
Note that the boundedness of $T^+$ from $L^{p,1}(\mathbb{R}^2_+)$ to $L^{p,\infty}(\mathbb{R}^2_+)$ is equivalent to the boundedness of the adjoint operator $(T^+)^*$ from $L^{p,1}(\mathbb{R}^2_+)$ to $L^{p,\infty}(\mathbb{R}^2_+).$ Here
$$(T^+)^*g(u+iv) = v^{\beta}\int_{\mathbb{R}^2_+}\frac{g(x+iy)}{|(x-u)+i(y+v)|^{1+\gamma}}y^{\alpha}\mathrm{d}y.$$ The whole proof follows the lines of the proof of Lemma \ref{b2}.
\end{proof}
We end this section with the following lemma.
\begin{lemma}
\label{b5}
Let $1<p<\infty$. Assume that $T^+_{\alpha,\beta,\gamma}$ is bounded on $L^{p,1}(\mathbb{R}^2_+)$. Then the parameters satisfy $$\gamma = \alpha+\beta+1\quad\mbox{and}\quad \alpha>-1,\beta>0.$$ 
\end{lemma}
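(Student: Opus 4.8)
The plan is to reduce the statement to the one–dimensional fact that $H_{\a,\ba,\ga}$ is bounded on $L^1((0,\infty))$ exactly when $\ga=\a+\ba+1$, $\a>-1$ and $\ba>0$; this is Theorem~\ref{thm:main1} specialised to $p=q=1$, $a=b=0$ (equivalently it follows from Lemma~\ref{lem:okikiolulimitcasesharp} together with the computation of $\sup_{y>0}\int_0^\infty\frac{x^{\a}y^{\ba}}{(x+y)^{\ga}}\mathrm{d}x$ via Lemma~\ref{lem:betafunctionconditions}). So the whole task is to show that boundedness of $T^+$ on $L^{p,1}(\mathbb{R}_+^2)$ forces $H_{\a,\ba,\ga}$ to be bounded on $L^1((0,\infty))$; the parameter restrictions then come for free. (If $\ba\le-1$ the integral defining $T^+$ already diverges on functions supported away from the boundary, contradicting boundedness, so we may assume $\ba>-1$, which makes all integrals below finite.)

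\textbf{The reduction.} I would test $T^+$ against products with a wide horizontal slab. Fix $h\in L^1((0,\infty))$ with $h\ge0$ and $\operatorname{supp}h\subseteq[0,M]$, and for $N>M$ set $f(u+iv):=\chi_{[-N,N]}(u)\,h(v)$, so that $\|f\|_{L^{p,1}}=(2N)^{1/p}\|h\|_{L^1}$. For $|x|\le N/2$ and $0<y\le N-M$ (so that $0<y+v\le N$ on $\operatorname{supp}h$), the substitution $s=x-u$ followed by $s=(y+v)t$ gives
\[
\int_{-N}^{N}\frac{\mathrm{d}u}{\big((x-u)^2+(y+v)^2\big)^{\frac{1+\ga}{2}}}\ \ge\ \int_{-N/2}^{N/2}\frac{\mathrm{d}s}{(s^2+(y+v)^2)^{\frac{1+\ga}{2}}}\ \ge\ \kappa_{\ga}\,(y+v)^{-\ga},
\]
where $\kappa_{\ga}:=\int_{-1/2}^{1/2}(t^2+1)^{-\frac{1+\ga}{2}}\mathrm{d}t\in(0,\infty)$; the first inequality uses $[x-N,x+N]\supseteq[-N/2,N/2]$ and the second uses $\frac{N}{2(y+v)}\ge\frac12$. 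Multiplying by $y^{\a}h(v)v^{\ba}$, integrating in $v$ and using $\operatorname{supp}h\subseteq[0,M]$ yields $T^+f(x+iy)\ge\kappa_{\ga}\,H_{\a,\ba,\ga}h(y)$ for all such $x,y$, hence
\[
\|T^+f\|_{L^{p,1}}\ \ge\ \int_0^{N-M}\Big(\int_{-N/2}^{N/2}\big(\kappa_{\ga}H_{\a,\ba,\ga}h(y)\big)^p\,\mathrm{d}x\Big)^{1/p}\mathrm{d}y\ =\ \kappa_{\ga}\,N^{1/p}\int_0^{N-M}H_{\a,\ba,\ga}h(y)\,\mathrm{d}y.
\]

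\textbf{Conclusion.} Comparing with $\|T^+f\|_{L^{p,1}}\le\|T^+\|_{L^{p,1}\to L^{p,1}}(2N)^{1/p}\|h\|_{L^1}$, the factors $N^{1/p}$ cancel, so $\int_0^{N-M}H_{\a,\ba,\ga}h(y)\,\mathrm{d}y\le 2^{1/p}\kappa_{\ga}^{-1}\|T^+\|\,\|h\|_{L^1}$ for every $N>M$. Letting $N\to\infty$ (monotone convergence, since $H_{\a,\ba,\ga}h\ge0$) gives $\|H_{\a,\ba,\ga}h\|_{L^1}\le 2^{1/p}\kappa_{\ga}^{-1}\|T^+\|\,\|h\|_{L^1}$; a routine truncation and $|H_{\a,\ba,\ga}h|\le H_{\a,\ba,\ga}|h|$ remove the restriction that $h$ be nonnegative and compactly supported. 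Thus $H_{\a,\ba,\ga}$ is bounded on $L^1((0,\infty))$, and Theorem~\ref{thm:main1} forces $\ga=\a+\ba+1$ together with $-(\ga-\ba-1)<1<\ba+1$; since $\ga-\ba-1=\a$, this reads $\a>-1$ and $\ba>0$, which is (b).

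I do not expect a real obstacle: the argument is just the observation that $T^+$ restricted to slab-shaped test functions dominates a copy of $H_{\a,\ba,\ga}$, with the slab width contributing equal powers of $N$ to both sides. The only points needing care are that these test functions lie in $L^{p,1}(\mathbb{R}_+^2)$ (immediate), the uniformity in $N$ of $\kappa_{\ga}$ (immediate from the substitutions above), and the limit $N\to\infty$; the relation $\ga=\a+\ba+1$ could alternatively be obtained first from the dilation argument $f_R(z)=f(Rz)$ of Lemma~\ref{b2}, but this is already contained in Theorem~\ref{thm:main1}.
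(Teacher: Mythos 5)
Your proof is correct, but it takes a genuinely different route from the paper. The paper argues piecemeal: the dilation $f_R(z)=f(Rz)$ (as in Lemma~\ref{b2}) gives the relation $\gamma=\alpha+\beta+1$, the localized test function $\chi_{[-1/4,1/4]}(x)\chi_{[1,2]}(y)$ gives $\alpha>-1$ (the paper's proof misprints this as ``$\alpha>0$''), and duality -- passing to $(T^+)^*$ on $L^{p',\infty}$ and reusing the argument of Lemma~\ref{b3} -- gives $\beta>0$. You instead prove, via the wide-slab test functions $\chi_{[-N,N]}(u)h(v)$, that $L^{p,1}$-boundedness of $T^+$ forces $L^1$-boundedness of $H_{\alpha,\beta,\gamma}$, and then quote the already-established necessity in Theorem~\ref{thm:main1} with $p=q=1$, $a=b=0$. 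This is precisely a converse to the paper's Lemma~\ref{b1} in the case $q=r=1$; your computation of the slab contribution (the matching factors $N^{1/p}$ on both sides, the uniform constant $\kappa_\gamma$, and the monotone-convergence passage $N\to\infty$) is sound, and the inequalities all hold in $[0,\infty]$ so no a priori finiteness is needed. What your approach buys is unification: the same slab argument with outer exponents $q,r$ and weights $a,b$ would yield the full converse of Lemma~\ref{b1} and hence all of the necessity lemmas \ref{b2}--\ref{b5} at once, and it delivers the strict inequalities directly from Lemma~\ref{lem:necessdirect}. One small quibble: the parenthetical claiming that $\beta\le-1$ makes $T^+$ diverge ``on functions supported away from the boundary'' is backwards (it is functions whose support reaches $v=0$ that cause trouble), but the remark is dispensable since your main chain of inequalities does not rely on it.
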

\begin{proof} Assume that $T^+_{\alpha,\beta,\gamma}$ is bounded on $L^{p,1}(\mathbb{R}^2_+)$. Then that $\gamma = \alpha+\beta+1$ and $\alpha>0$ follows as in the first part of the proof of Lemma \ref{b2}. The boundedness of $T^+_{\alpha,\beta,\gamma}$ on $L^{p,1}(\mathbb{R}^2_+)$ is equivalent to the bounded of the operator 
$$(T^+)^*g(u+iv) = v^{\beta}\int_{\mathbb{R}^2_+}\frac{g(x+iy)}{|(x-u)+i(y+v)|^{1+\gamma}}y^{\alpha}\mathrm{d}y$$ on $L^{p',\infty}(\mathbb{R}^2_+)$. This as in the proof of Lemma \ref{b3} gives that $\beta>0$.
\end{proof}
\subsubsection{Proof of the results on operators $T^+$}
The sufficient part in Theorem \ref{thm:main2}, Theorem \ref{thm:main3}, Theorem \ref{thm:main4}, Theorem \ref{thm:main5} and Theorem \ref{thm:main6} follows from Lemma \ref{b11}. The necessity of the conditions in first theorem follows from Lemma \ref{b2}. In Theorem \ref{thm:main3} and Theorem \ref{thm:main4}, the necessity of the given conditions is a consequence of Lemma \ref{b3}, while in  Theorem \ref{thm:main5} and Theorem \ref{thm:main6} it follows from Lemma \ref{b4} and Lemma \ref{b5} respectively. Theorem \ref{thm:main7} is just the dual version of Theorem \ref{thm:main6} and so, its proof also follows from Lemma \ref{b1} and Lemma \ref{b5}.

Let us prove Theorem \ref{thm:main8} and Theorem \ref{thm:main9}.
\begin{proof}[Proof of Theorem \ref{thm:main8}]
First assume that $\alpha>0$ and $\beta>-1$, and $\gamma=\alpha+\beta+1$. Then it follows from Lemma \ref{lem:integkernel} that for any $z=x+iy\in \mathbb{R}_+^2$, the integral $$I(z)
:=\int_{\mathbb{R}_+^2}\frac{v^\beta}{|(x-u)+i(y+v)|^{1+\gamma}}\mathrm{d}u\mathrm{d}v$$
is convergent and $$I(z)=B(\frac{1}{2},\frac{\gamma}{2})B(\beta+1,\alpha)y^{-\gamma+\beta+1}.$$
Hence, as $\gamma=\alpha+\beta+1$, we obtain
$$\sup_{x+iy\in \mathbb{R}_+^2}\,y^\alpha \int_{\mathbb{R}_+^2}\frac{v^\beta}{|(x-u)+i(y+v)|^{1+\gamma}}\mathrm{d}u\mathrm{d}v=B(\frac{1}{2},\frac{\gamma}{2})B(\beta+1,\alpha)<\infty.$$
Conversely, let us suppose that 
$$\sup_{x+iy\in \mathbb{R}_+^2}\,y^\alpha \int_{\mathbb{R}_+^2}\frac{v^\beta}{|(x-u)+i(y+v)|^{1+\gamma}}\mathrm{d}u\mathrm{d}v<\infty.$$
Then in particular, we have $$\int_{\mathbb{R}_+^2}\frac{v^\beta}{|-u+i(1+v)|^{1+\gamma}}\mathrm{d}u\mathrm{d}v<\infty,$$
and by Lemma \ref{lem:integkernel}, this implies that $\beta>-1$ and $\gamma>\beta+1$. Therefore, for any $z=x+iy\in \mathbb{R}_+^2$, we have from the same lemma that
$$B(\frac{1}{2},\frac{\gamma}{2})B(\beta+1,\gamma-\beta-1)y^{-\gamma+\alpha+\beta+1}=y^\alpha \int_{\mathbb{R}_+^2}\frac{v^\beta}{|(x-u)+i(y+v)|^{1+\gamma}}\mathrm{d}u\mathrm{d}v<\infty$$ and this is possible only if $\gamma=\alpha+\beta+1$. Consequently, $\gamma>\beta+1$ is equivalent to $\alpha>0$. The proof is complete.
\end{proof}
We next prove Theorem \ref{thm:main9}.
\begin{proof}[Proof of Theorem \ref{thm:main9}]
Note that the kernel of the operator $T^+$ with respect to the measure $v^a\mathrm{d}u\mathrm{d}v$ is $K(x+iy,u+iv)=\frac{y^\alpha v^{\beta-a}}{|(x-u)+i(y+v)|^{1+\gamma}}$. It follows from Lemma \ref{lem:okikiolulimitcase} that the boundedness of $T^+$ on $L_a^1(\mathbb{R}_+^2)$ is equivalent to the following condition
\begin{equation}\label{eq:equivL1a}
\sup_{u+iv\in \mathbb{R}_+^2}\int_{\mathbb{R}_+^2}K(x+iy,u+iv)y^a\mathrm{d}x\mathrm{d}y.
\end{equation}
Therefore, we only have to prove that (\ref{eq:equivL1a}) is equivalent to $\gamma=\alpha+\beta+1$ and $-\alpha<a+1<\beta+1$. This is handled as in the proof of Theorem \ref{thm:main9}. Lemma \ref{lem:integkernel} gives us that 
\Beas \|T^+\|_{L_a^1\rightarrow L_a^1} &=& \sup_{u+iv\in \mathbb{R}_+^2}\int_{\mathbb{R}_+^2}K(x+iy,u+iv)y^a\mathrm{d}x\mathrm{d}y\\ &=& \sup_{u+iv\in \mathbb{R}_+^2}\int_{\mathbb{R}_+^2}\frac{y^\alpha v^{\beta-a}}{|(x-u)+i(y+v)|^{1+\gamma}}y^a\mathrm{d}x\mathrm{d}y\\ &=& B(\frac{1}{2},\frac{\gamma}{2})B(\beta-a,\alpha+a+1).
\Eeas
The proof is complete.
\end{proof}
\section{Boundedness of a family of Bergman-type operators}
It is proved in \cite{Sehba} that the following holds.
\begin{proposition}\label{prop:main8} The operator $T_{\a,\ba,\ga}$ is bounded on
$L^{\infty}(\mathbb{R}_+^2)$ if and only if $\a >0$,  $\ba
> -1$ and $\ga=\a + \ba + 1$. 
\end{proposition}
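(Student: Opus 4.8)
The plan is to reduce everything to the positive analogue $T^+=T^+_{\a,\ba,\ga}$, whose boundedness on $L^\infty$ (with exact norm) is Theorem~\ref{thm:main8}. For the \emph{sufficiency}, since $|(z-\bar w)^{1+\ga}|=|z-\bar w|^{1+\ga}$ one has the pointwise domination
\[
|T_{\a,\ba,\ga}f(z)|\le T^+_{\a,\ba,\ga}|f|(z)\qquad\text{for all }f\in L^\infty(\mathbb{R}_+^2),\ z\in\mathbb{R}_+^2 .
\]
Hence, if $\a>0$, $\ba>-1$ and $\ga=\a+\ba+1$, Theorem~\ref{thm:main8} gives that $T^+_{\a,\ba,\ga}$ is bounded on $L^\infty(\mathbb{R}_+^2)$, and therefore so is $T_{\a,\ba,\ga}$. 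This direction is immediate.

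The \emph{necessity} is the substantive direction, and its main difficulty is that, unlike $T^+$, the operator $T_{\a,\ba,\ga}$ carries a genuinely complex kernel: the cancellation it produces makes the elementary test functions used for $T^+$ in the proof of Theorem~\ref{thm:main8} (constants, indicators of boxes) uninformative — for example $\int_{\mathbb{R}}(x+is)^{-(1+\ga)}\,\mathrm{d}x=0$ for $\ga>0$, so $T_{\a,\ba,\ga}$ annihilates every function of $\Im w$ alone. The device I would use instead is to test $T_{\a,\ba,\ga}$ against the unimodular factor of its own kernel. Fix $z_0=x_0+iy_0\in\mathbb{R}_+^2$ and set
\[
f_{z_0}(w):=\left(\frac{z_0-\bar w}{|z_0-\bar w|}\right)^{1+\ga},\qquad w=u+iv\in\mathbb{R}_+^2 .
\]
Since $\Im(z_0-\bar w)=y_0+v>0$, the base stays off the negative real axis, so $f_{z_0}$ is a well-defined measurable function with $|f_{z_0}|\equiv 1$; thus $f_{z_0}\in L^\infty(\mathbb{R}_+^2)$ with $\|f_{z_0}\|_\infty=1$. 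The point of this choice is the identity $f_{z_0}(w)(z_0-\bar w)^{-(1+\ga)}=|z_0-\bar w|^{-(1+\ga)}$, which turns the complex kernel of $T_{\a,\ba,\ga}$ at the point $z_0$ into the positive kernel of $T^+$, so that morally
\[
T_{\a,\ba,\ga}f_{z_0}(z_0)=y_0^{\a}\int_{\mathbb{R}_+^2}\frac{v^{\ba}}{|z_0-\bar w|^{1+\ga}}\,\mathrm{d}u\,\mathrm{d}v=T^+_{\a,\ba,\ga}\mathbf{1}(z_0).
\]

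To make this rigorous — and to deal at the same time with the convergence of the integral — I would truncate: set $g_n:=f_{z_0}\,\chi_{E_n}$ with $E_n=\{\,w\in\mathbb{R}_+^2:|w-z_0|<n,\ \Im w>1/n\,\}$. Then $g_n\in L^\infty$ with $\|g_n\|_\infty\le 1$; on $E_n$ the kernel of $T_{\a,\ba,\ga}$ is bounded, so the integral defining $T_{\a,\ba,\ga}g_n$ converges absolutely and is locally uniformly bounded in $z$, whence $T_{\a,\ba,\ga}g_n$ is continuous on $\mathbb{R}_+^2$. Writing $M:=\|T_{\a,\ba,\ga}\|_{L^\infty\to L^\infty}$, we have $\|T_{\a,\ba,\ga}g_n\|_{L^\infty}\le M$, and continuity upgrades this to the genuine pointwise bound $|T_{\a,\ba,\ga}g_n(z_0)|\le M$, that is
\[
y_0^{\a}\int_{E_n}\frac{v^{\ba}}{|z_0-\bar w|^{1+\ga}}\,\mathrm{d}u\,\mathrm{d}v\le M .
\]
Letting $n\to\infty$ and invoking monotone convergence yields $T^+_{\a,\ba,\ga}\mathbf{1}(z_0)<\infty$ and $T^+_{\a,\ba,\ga}\mathbf{1}(z_0)\le M$ for every $z_0$; in other words $T^+_{\a,\ba,\ga}\mathbf{1}\in L^\infty(\mathbb{R}_+^2)$.

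It remains to read off the parameters from $T^+_{\a,\ba,\ga}\mathbf{1}\in L^\infty$, which is exactly the computation already carried out in the proof of Theorem~\ref{thm:main8}: Lemma~\ref{lem:integkernel}(1) evaluates the integral in $u$ (and forces $\ga>0$), and Lemma~\ref{lem:betafunctionconditions} the integral in $v$ (forcing $\ba>-1$ and $\ga-\ba-1>0$), giving
\[
T^+_{\a,\ba,\ga}\mathbf{1}(z_0)=B\!\left(\tfrac12,\tfrac{\ga}{2}\right)B(\ba+1,\ga-\ba-1)\,y_0^{\,\a+\ba+1-\ga}.
\]
Since $y_0$ ranges over all of $(0,\infty)$, finiteness of $\sup_{z_0}T^+_{\a,\ba,\ga}\mathbf{1}(z_0)$ forces the exponent to vanish, i.e. $\ga=\a+\ba+1$; combined with $\ga>\ba+1$ this is the same as $\a>0$, and we already have $\ba>-1$ (and then $\ga>0$ is automatic). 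This is the asserted characterization. I expect the only genuinely delicate point to be the passage from an $L^\infty$-bound on $T_{\a,\ba,\ga}g_n$ to its value at the single point $z_0$ — handled by the truncation and the continuity of $T_{\a,\ba,\ga}g_n$ — everything else being either the trivial pointwise domination or the $\beta$-function bookkeeping already done for $T^+$.
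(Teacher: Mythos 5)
Your argument is correct, and it is worth noting that the paper itself offers no proof of this proposition: it is stated with a bare citation to \cite{Sehba}, so there is no internal argument to compare against. What you supply is a clean, self-contained proof that leans entirely on Theorem \ref{thm:main8}: the sufficiency via the pointwise domination $|Tf|\le T^{+}|f|$ is exactly the observation the authors make in the introduction (``the boundedness of $T^{+}$ implies the boundedness of $T$''), and your necessity argument via the unimodular multiplier $f_{z_0}(w)=\bigl((z_0-\bar w)/|z_0-\bar w|\bigr)^{1+\gamma}$ is the standard Forelli--Rudin-type device (and is the same idea the authors deploy, in a rougher form, in the proof of Corollary \ref{cor:main2}); it correctly converts the oscillating kernel into the positive one at the test point, after which the $\beta$-function computation from the proof of Theorem \ref{thm:main8} forces $\gamma>0$, $\beta>-1$, $\gamma>\beta+1$ and, from scale invariance of the resulting power of $y_0$, $\gamma=\alpha+\beta+1$, hence $\alpha>0$. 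Your handling of the two genuine technical points --- passing from an essential-supremum bound to the value at the single point $z_0$ via continuity of $Tg_n$, and the monotone-convergence passage to $T^{+}\mathbf{1}(z_0)\le M$ --- is sound. The only quibble is that $T$ is defined in the paper on $C_c^{\infty}(\mathbb{R}_+^2)$, and your truncations $g_n=f_{z_0}\chi_{E_n}$ are merely bounded measurable with support in the compact set $\overline{E_n}\subset\mathbb{R}_+^2$; since the kernel is bounded on $\overline{E_n}\times K$ for $K\Subset\mathbb{R}_+^2$, a routine mollification of $g_n$ (uniformly bounded by $1$, converging a.e.) together with dominated convergence transfers the bound $\|T\|_{L^\infty\to L^\infty}\le M$ to $Tg_n$; a sentence to this effect would close the argument completely.
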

The proof of the dual version of this result follows the same way.
\begin{corollary}\label{cor:main9} Let $a>-1$. Then the operator $T_{\a,\ba,\ga}$ is bounded on
$L_a^{1}(\mathbb{R}_+^2)$ if and only if $\ga=\a + \ba + 1$ and $-\alpha<a+1<\beta+1$. 
\end{corollary}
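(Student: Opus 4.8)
The plan is to reduce the statement to the kernel estimate already established for $T^+$ in Theorem \ref{thm:main9}, by first showing that $T_{\a,\ba,\ga}$ and $T^+_{\a,\ba,\ga}$ have the same operator norm on $L_a^1(\mathbb{R}_+^2)$. The tool for this is a complex-kernel version of Lemma \ref{lem:okikiolulimitcasesharp}: if $Sf(z)=\int_X f(w)K(z,w)\,d\mu(w)$ is an integral operator on a measure space $(X,\mu)$ whose kernel is continuous in $w$ for each fixed $z$, then $S$ is bounded on $L^1(X,d\mu)$ if and only if $M:=\sup_{w\in X}\int_X|K(z,w)|\,d\mu(z)<\infty$, and then $\|S\|_{L^1\to L^1}=M$. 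The bound $\|S\|\le M$ is Fubini together with the triangle inequality; for $\|S\|\ge M$ one tests $S$ on normalized averages $f_\varepsilon=\mathbf 1_{B_\varepsilon(w_0)}/\mu(B_\varepsilon(w_0))$, observes that $Sf_\varepsilon(z)\to K(z,w_0)$ as $\varepsilon\to 0$ for each $z$ by continuity, and applies Fatou's lemma to get $\int_X|K(z,w_0)|\,d\mu(z)\le\liminf_\varepsilon\|Sf_\varepsilon\|_1\le\|S\|$.

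Applying this to $T=T_{\a,\ba,\ga}$ with $X=\mathbb{R}_+^2$ and $d\mu(w)=(\Im w)^a\,dV(w)$: the kernel of $T$ with respect to $d\mu$ is $K(z,w)=\frac{(\Im z)^\a(\Im w)^{\ba-a}}{(z-\bar w)^{1+\ga}}$, which is continuous in $w$ on $\mathbb{R}_+^2$ since $z-\bar w\neq 0$ there, so
$$\|T\|_{L_a^1\to L_a^1}=\sup_{w\in\mathbb{R}_+^2}\int_{\mathbb{R}_+^2}\frac{(\Im z)^\a(\Im w)^{\ba-a}}{|z-\bar w|^{1+\ga}}(\Im z)^a\,dV(z).$$
The right-hand side is precisely $\|T^+_{\a,\ba,\ga}\|_{L_a^1\to L_a^1}$, by Lemma \ref{lem:okikiolulimitcasesharp} applied to the non-negative kernel $|K|$ (which is the kernel of $T^+$ with respect to $d\mu$). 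Theorem \ref{thm:main9} then says this common quantity is finite exactly when $\ga=\a+\ba+1$ and $-\a<a+1<\ba+1$, and equals $B(\frac{1}{2},\frac{\ga}{2})B(\ba-a,\a+a+1)$ in that case; this is the assertion, with the sharp norm $\|T_{\a,\ba,\ga}\|_{L_a^1\to L_a^1}=B(\frac{1}{2},\frac{\ga}{2})B(\ba-a,\a+a+1)$ as a bonus.

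I expect the only point needing care to be the necessity half of the complex-kernel criterion: the balls $B_\varepsilon(w_0)$ must be chosen with $0<\mu(B_\varepsilon(w_0))<\infty$ shrinking to $w_0$, and the convergence $Sf_\varepsilon(z)\to K(z,w_0)$ must be justified from the continuity of $w\mapsto K(z,w)$ before invoking Fatou — routine, but order-sensitive. An alternative route, bypassing this, is duality: $L^\infty(\mathbb{R}_+^2)$ is the dual of $L_a^1(\mathbb{R}_+^2)$, a Fubini computation identifies the adjoint of $T_{\a,\ba,\ga}$ for the pairing $\int f\bar g\,(\Im z)^a\,dV$ with a unimodular constant times $T_{\ba-a,\a+a,\ga}$, and Proposition \ref{prop:main8} applied to the latter gives $\ba-a>0$, $\a+a>-1$ and $\ga=(\ba-a)+(\a+a)+1=\a+\ba+1$ — again the condition $\ga=\a+\ba+1$, $-\a<a+1<\ba+1$.
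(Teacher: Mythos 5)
Your proposal is correct, and it supplies substantially more than the paper does: the paper's entire argument for Corollary \ref{cor:main9} is the sentence that it ``follows the same way'' as Proposition \ref{prop:main8} (whose proof lives in \cite{Sehba}), so no written proof is actually given. Your second route --- compute the adjoint of $T_{\alpha,\beta,\gamma}$ for the pairing $\int f\bar g\,(\Im z)^a dV$, identify it up to a unimodular factor with $T_{\beta-a,\alpha+a,\gamma}$, and feed it to Proposition \ref{prop:main8} --- is almost certainly what the authors intend by ``dual version,'' and it recovers exactly the stated conditions $\gamma=\alpha+\beta+1$, $-\alpha<a+1<\beta+1$. Your first route is genuinely different and buys more: by upgrading Lemma \ref{lem:okikiolulimitcasesharp} to complex continuous kernels (Fubini for the upper bound; normalized indicators of shrinking balls plus Fatou for the lower bound), you get $\|T\|_{L_a^1\to L_a^1}=\sup_w\int|K(z,w)|\,d\mu(z)=\|T^+\|_{L_a^1\to L_a^1}$, so Theorem \ref{thm:main9} hands you not only the equivalence but also the sharp norm $B(\tfrac12,\tfrac{\gamma}{2})B(\beta-a,\alpha+a+1)$ for $T$ itself, which the corollary does not even claim. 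The points you flag are the right ones and are harmless here: the kernel is continuous in $w$ for each fixed $z$ because $\Im(z-\bar w)>0$, the weighted measure of small balls is positive and finite, and one should note (at the paper's own level of rigor) that the bounded extension of $T$ agrees with the integral formula on indicators of compactly contained balls, which follows by approximating in $L_a^1$ and passing to an a.e.\ convergent subsequence. Either route is a complete and valid proof.
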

Taking $\alpha=0$ and $\gamma=\beta+1$, we obtain the following application.
\begin{corollary}\label{cor:main91} Let $a,\beta>-1$. Then the operator $P_{\beta}$ is a bounded projection from
$L_a^{1}(\mathbb{R}_+^2)$ into $A_a^{1}(\mathbb{R}_+^2)$  if and only if $a<\beta$. 
\end{corollary}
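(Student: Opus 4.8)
The plan is to derive Corollary~\ref{cor:main91} as a direct specialization of Corollary~\ref{cor:main9}. First I would recall that the weighted Bergman projection $P_\beta$ is, up to the multiplicative constant $c_\beta$, exactly the integral operator $T_{\alpha,\beta,\gamma}$ with the choice $\alpha=0$ and $1+\gamma=2+\beta$, i.e. $\gamma=\beta+1$; this is visible by comparing the integral formula $P_\beta f(z)=c_\beta\int_{\mathbb{R}_+^2}\frac{f(w)}{(z-\bar w)^{2+\beta}}dV_\beta(w)$ with the definition of $Tf(z)=(\Im z)^\alpha\int_{\mathbb{R}_+^2}\frac{f(w)}{(z-\bar w)^{1+\gamma}}(\Im w)^\beta dV(w)$. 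Since multiplication by the nonzero constant $c_\beta$ does not affect boundedness, $P_\beta$ is bounded on $L_a^1(\mathbb{R}_+^2)$ if and only if $T_{0,\beta,\beta+1}$ is.

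Next I would apply Corollary~\ref{cor:main9} with $\alpha=0$, $\gamma=\beta+1$. The relation $\gamma=\alpha+\beta+1$ is automatically satisfied, so the only remaining condition is $-\alpha<a+1<\beta+1$, which with $\alpha=0$ reads $-1<a+1<\beta+1$, that is (since $a>-1$ is already assumed) simply $a<\beta$. Hence $P_\beta$ is bounded from $L_a^1(\mathbb{R}_+^2)$ to $L_a^1(\mathbb{R}_+^2)$ precisely when $a<\beta$.

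It then remains to upgrade ``bounded operator on $L_a^1$'' to ``bounded projection onto $A_a^1$''. For this I would note that $P_\beta$ reproduces holomorphic functions: for $f\in A_\nu^2$ one has the reproducing identity $f(z)=c_\nu\int_{\mathbb{R}_+^2}\frac{f(w)}{(z-\bar w)^{2+\nu}}dV_\nu(w)$ recalled in the introduction, and a density/limiting argument (together with the fact that $P_\beta$ is already known bounded whenever $a<\beta$) shows $P_\beta f=f$ for all $f\in A_a^1(\mathbb{R}_+^2)$; in particular $P_\beta$ is idempotent on its range and maps $L_a^1$ into the holomorphic subspace, so it is a bounded projection from $L_a^1(\mathbb{R}_+^2)$ onto $A_a^1(\mathbb{R}_+^2)$. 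Conversely, if $P_\beta$ is a bounded projection onto $A_a^1$ it is in particular bounded on $L_a^1$, so $a<\beta$ by the above.

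The genuinely routine parts are the constant-matching and the parameter substitution; the only point needing a little care is the reproducing property on $A_a^1$, since the formula $f(z)=c_\nu\int f(w)(z-\bar w)^{-2-\nu}dV_\nu(w)$ is stated in the excerpt for $A_\nu^2$ rather than $A_a^1$. I would handle this by approximating $f\in A_a^1$ by dilates or by functions in $A_a^1\cap A_a^2$, using the boundedness of $P_\beta$ on $L_a^1$ (valid under $a<\beta$) to pass to the limit; this is the main, though minor, obstacle. Everything else is an immediate corollary of the results already established.
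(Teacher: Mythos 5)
Your proposal matches the paper's (implicit) proof exactly: the paper obtains Corollary~\ref{cor:main91} by the same specialization $\alpha=0$, $\gamma=\beta+1$ of Corollary~\ref{cor:main9}, under which $-\alpha<a+1<\beta+1$ reduces to $a<\beta$ (note the left inequality should read $0<a+1$ rather than $-1<a+1$ as you wrote, but either way it is automatic from $a>-1$). Your extra care about the reproducing identity $P_\beta f=f$ on $A_a^1$ goes beyond what the paper records --- the paper simply asserts the projection property --- and your density sketch is a reasonable way to justify it.
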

We have the following result.
\begin{corollary}\label{cor:main2}
Suppose $a,b>-1$, $1< p<\infty$, and $1< q\le r< \infty$. Then 

the operator $T=T_{\alpha,\beta,\gamma}$ is bounded from
$L_{a}^{p,q}(\mathbb{R}_+^2)$ to $L_{b}^{p,r}(\mathbb{R}_+^2)$ if and only if 
(\ref{eq:relationalphabetagamma}) and (\ref{eq:condineq1}) hold.
\end{corollary}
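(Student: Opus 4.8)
The plan is to obtain both implications from the corresponding facts about the positive operator $T^{+}$ together with the pointwise domination $|T_{\alpha,\beta,\gamma}f(z)|\le T^{+}_{\alpha,\beta,\gamma}|f|(z)$, the only genuinely new point being a lower bound for $|Tf|$ on a well-chosen family of test functions, for which one must control the oscillation of the complex kernel.

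\textbf{Sufficiency.} Since $|z-\bar w|$ is precisely the modulus of $z-\bar w$, one has $|T_{\alpha,\beta,\gamma}f(z)|\le T^{+}_{\alpha,\beta,\gamma}|f|(z)$ for every $z\in\mathbb{R}_+^2$. Hence, if $\gamma=\alpha+\beta+1-\tfrac{a+1}{q}+\tfrac{b+1}{r}$ and $-q(\gamma-\beta-1)<a+1<q(\beta+1)$ hold, Theorem \ref{thm:main2} gives that $T^{+}$ is bounded from $L_{a}^{p,q}(\mathbb{R}_+^2)$ into $L_{b}^{p,r}(\mathbb{R}_+^2)$, and the pointwise bound immediately transfers this boundedness to $T$.

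\textbf{Necessity of the relation on $\gamma$.} Assume $T$ is bounded from $L_{a}^{p,q}$ into $L_{b}^{p,r}$. I would repeat verbatim the dilation argument from the first part of the proof of Lemma \ref{b2}: with $f_{R}(z)=f(Rz)$ one checks $Tf_{R}(z)=R^{\gamma-\beta-\alpha-1}Tf(Rz)$, hence $\|Tf_{R}\|_{L_{b}^{p,r}}=R^{\gamma-\beta-\alpha-1-\frac1p-\frac{b+1}{r}}\|Tf\|_{L_{b}^{p,r}}$ while $\|f_{R}\|_{L_{a}^{p,q}}=R^{-\frac{a+1}{q}-\frac1p}\|f\|_{L_{a}^{p,q}}$; letting $R$ range over $(0,\infty)$ in the boundedness inequality forces $\gamma=\alpha+\beta+1-\tfrac{a+1}{q}+\tfrac{b+1}{r}$.

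\textbf{Necessity of the inequalities.} Here the argument of Lemma \ref{b2} has to be adapted because the kernel of $T$ is complex. I would take $f(x+iy)=\chi_{[-\delta,\delta]}(x)\chi_{[1,2]}(y)$ with $\delta>0$ small, depending only on $\gamma$. For $|x|\le\delta/2$, $0<y\le1$, $u\in[-\delta,\delta]$ and $v\in[1,2]$, the point $z-\bar w=(x-u)+i(y+v)$ lies in a cone around the positive imaginary axis of half-angle at most $\arctan(3\delta/2)$; choosing $\delta$ small enough one gets, uniformly on that set, $(z-\bar w)^{-1-\gamma}=e^{-i(1+\gamma)\pi/2}(y+v)^{-1-\gamma}(1+o(1))$, so that $\mathrm{Re}\bigl(e^{i(1+\gamma)\pi/2}(z-\bar w)^{-1-\gamma}\bigr)\gtrsim (y+v)^{-1-\gamma}$ with no cancellation in the $u$-integration. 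Integrating over $u\in[-\delta,\delta]$ and $v\in[1,2]$ yields $|Tf(x+iy)|\gtrsim \tfrac{y^{\alpha}}{(1+y)^{\gamma}}$ for $|x|\le\delta/2$, $0<y\le1$; substituting this into the boundedness inequality and invoking Lemma \ref{lem:betafunctionconditions} exactly as in Lemma \ref{b2} gives $-r\alpha<b+1$, equivalently $-q(\gamma-\beta-1)<a+1$. For the upper bound $a+1<q(\beta+1)$ I would pass to the adjoint $T^{*}$, whose kernel $(\bar z-w)^{-1-\gamma}$ has the same modulus, and run the same test-function argument with the two variables interchanged (concentrating in $v$), as in the adjoint step of Lemma \ref{b2}.

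\textbf{Main obstacle.} The delicate step is the phase control: one must verify that, on the support of the chosen test function, the complex kernel $(z-\bar w)^{-1-\gamma}$ does not oscillate enough to produce cancellation, which is why the horizontal window width $\delta$ must be taken small relative to $1+|\gamma|$. Once this lower bound is secured, every remaining computation is a repetition of the estimates already carried out for $T^{+}$ in Lemma \ref{b2}.
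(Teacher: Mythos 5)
Your proposal is correct, and the sufficiency and dilation steps coincide with the paper's. Where you genuinely diverge is the necessity of the inequalities, i.e.\ the lower bound for $|Tf|$ in the presence of the oscillating kernel. The paper handles this by fixing a point $\zeta$ and using the test function $f_\zeta(x+iy)=\frac{(\zeta-x+iy)^{1+\gamma}}{|\zeta-x+iy|^{1+\gamma}}\chi_{[-1/4,1/4]}(x)\chi_{[1,2]}(y)$, whose unimodular factor exactly cancels the phase of the kernel at $z=\zeta$, so that $Tf_\zeta(\zeta)=T^{+}\bigl(\chi_{[-1/4,1/4]}\chi_{[1,2]}\bigr)(\zeta)$; one then argues as for $T^{+}$. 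You instead keep the plain test function $\chi_{[-\delta,\delta]}\chi_{[1,2]}$ but shrink the horizontal window so that $z-\bar w$ stays in a narrow cone about the imaginary axis, whence $\mathrm{Re}\bigl(e^{i(1+\gamma)\pi/2}(z-\bar w)^{-1-\gamma}\bigr)\gtrsim|z-\bar w|^{-1-\gamma}$ uniformly and no cancellation occurs. Your route has a concrete advantage: it yields the lower bound $|Tf(x+iy)|\gtrsim y^{\alpha}(1+y)^{-\gamma}$ on a full region $\{|x|\le\delta/2,\ 0<y\le1\}$ with a \emph{single} test function, which is exactly what the subsequent integration $\int_0^1 y^{r\alpha+b}\,\mathrm{d}y<\infty$ requires; the paper's identity holds only at the one point $\zeta$ for a $\zeta$-dependent test function, and turning that into a norm inequality needs an additional (unwritten) continuity step. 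The price you pay is the explicit phase bookkeeping, with $\delta$ chosen so that $(1+|\gamma|)\arctan(3\delta/2)$ is, say, at most $\pi/3$; that is elementary and you have identified it correctly as the only delicate point. Your treatment of the reverse inequality via the adjoint, whose kernel has the same modulus and conjugate phase, is likewise sound.
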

\begin{proof}
The sufficiency of the conditions (\ref{eq:relationalphabetagamma}) and (\ref{eq:condineq1}) follows from Theorem \ref{thm:main2}. Thus we only have to check the necessity of these conditions. Assume that $T$ is bounded from
$L_{a}^{p,q}(\mathbb{R}_+^2)$ to $L_{b}^{p,r}(\mathbb{R}_+^2)$. That (\ref{eq:relationalphabetagamma}) holds follows as at the beginning of the proof of Lemma \ref{b1}. To prove the other condition,  let us fix $\zeta=s+it\in \mathbb{R}_+^2$ and define the function $f$ by $f(x+iy):=\frac{(\zeta-x+iy)^{1+\gamma}}{|\zeta-x+iy|^{1+\gamma}}\chi_{[-\frac{1}{4},\frac{1}{4}]}(x)\chi_{[1,2]}(y)$. Then we have for any $x+iy\in \mathbb{R}_+^2$, $$Tf(x+iy)=y^\alpha \int_{\mathbb{R}_+^2}\frac{(\zeta-u+iv)^{1+\gamma}}{|\zeta-u+iv|^{1+\gamma}}\frac{v^\beta}{[(x-u)+i(y+v)]^{1+\gamma}}\chi_{[-\frac{1}{4},\frac{1}{4}]}(u)\chi_{[1,2]}(v)\mathrm{d}u\mathrm{d}v.$$
Taking in particular $x+iy=\zeta$, we obtain 
$$Tf(s+it)=t^\alpha \int_{1}^1\int_{-\frac{1}{4}}^{\frac{1}{4}}\frac{1}{|s-u+i(t+v)|^{1+\gamma}}v^\beta\mathrm{d}u\mathrm{d}v.$$
The remaining of the proof follows the lines of the proof of Lemma \ref{b1}.
\end{proof}
This leads to the following for the Bergman projection.
\begin{corollary}\label{cor:main21}
Suppose $a,b,\beta>-1$, $1< p<\infty$, and $1< q\le r< \infty$. Then 

the operator $P_{\beta}$ is a bounded from
$L_{a}^{p,q}(\mathbb{R}_+^2)$ into $A_{b}^{p,r}(\mathbb{R}_+^2)$ if and only if 
$a+1<q(\beta+1)$ and $\frac{a+1}{q}=\frac{b+1}{r}$.
\end{corollary}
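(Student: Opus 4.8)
The plan is to obtain this statement as a direct specialization of Corollary \ref{cor:main2}. Up to the nonzero multiplicative constant $c_\beta$, the weighted Bergman projection $P_\beta$ is exactly the operator $T_{\alpha,\beta,\gamma}$ with $\alpha=0$ and $\gamma=\beta+1$: its integral kernel $(z-\bar w)^{-(2+\beta)}$ together with the weight $(\Im w)^\beta$ is precisely the kernel of $T_{0,\beta,\beta+1}$. Hence $P_\beta$ is bounded from $L_a^{p,q}(\mathbb{R}_+^2)$ to $L_b^{p,r}(\mathbb{R}_+^2)$ if and only if $T_{0,\beta,\beta+1}$ is; and since $P_\beta f$ is holomorphic on $\mathbb{R}_+^2$, in that case $P_\beta$ automatically maps $L_a^{p,q}(\mathbb{R}_+^2)$ into $A_b^{p,r}(\mathbb{R}_+^2)$.

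First I would apply Corollary \ref{cor:main2} with $\alpha=0$ and $\gamma=\beta+1$. The relation (\ref{eq:relationalphabetagamma}) then reads $\beta+1=\beta+1-\frac{a+1}{q}+\frac{b+1}{r}$, that is $\frac{a+1}{q}=\frac{b+1}{r}$; and since now $\gamma-\beta-1=0$, the two-sided inequality (\ref{eq:condineq1}) collapses to $0<a+1<q(\beta+1)$, whose left-hand half is automatic because $a>-1$. So the hypotheses of Corollary \ref{cor:main2} reduce, in this case, to exactly $a+1<q(\beta+1)$ and $\frac{a+1}{q}=\frac{b+1}{r}$, which is the asserted characterization of boundedness.

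It then remains to check that $P_\beta$ genuinely acts as a projection onto $A_b^{p,r}(\mathbb{R}_+^2)$, i.e. that $P_\beta g=g$ for every $g\in A_b^{p,r}(\mathbb{R}_+^2)$. This follows from the reproducing identity $f(z)=c_\beta\int_{\mathbb{R}_+^2}f(w)(z-\bar w)^{-(2+\beta)}dV_\beta(w)$, which holds on $A_\beta^2(\mathbb{R}_+^2)$ and is extended to $A_b^{p,r}(\mathbb{R}_+^2)$ by the usual density and normal families argument, the convergence of the kernel integrals being ensured by the conditions $a+1<q(\beta+1)$ and $\frac{a+1}{q}=\frac{b+1}{r}$ just obtained.

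I expect no genuine obstacle: the corollary is a mechanical specialization of Corollary \ref{cor:main2}, just as Corollary \ref{cor:main91} specializes Corollary \ref{cor:main9}. The only step that calls for a little care is the last paragraph, namely making precise the extension of the reproducing formula from $A_\beta^2(\mathbb{R}_+^2)$ to the mixed-norm Bergman space $A_b^{p,r}(\mathbb{R}_+^2)$; everything else is routine bookkeeping.
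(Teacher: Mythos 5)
Your proposal is correct and is essentially the paper's (implicit) argument: Corollary \ref{cor:main21} is obtained by specializing Corollary \ref{cor:main2} to $\alpha=0$, $\gamma=\beta+1$, whereupon (\ref{eq:relationalphabetagamma}) becomes $\frac{a+1}{q}=\frac{b+1}{r}$ and (\ref{eq:condineq1}) collapses to $a+1<q(\beta+1)$ since $a>-1$. Your final paragraph on the reproducing identity is more than the statement actually requires (it asserts only boundedness into $A_b^{p,r}(\mathbb{R}_+^2)$, which follows since the kernel is holomorphic in $z$), but it is harmless.
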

Using the same idea as above, we obtain the following.
\begin{corollary}\label{cor:main3}
Suppose $a>-1$, $1< p<\infty$ and $1<q<\infty$. Then the
 operator $T$ is bounded from
$L_{a}^{p,q}(\mathbb{R}_+^2)$ to $L^{p,\infty}(\mathbb{R}_+^2)$ if and only if 
(\ref{eq:main31}) and (\ref{eq:main32}) hold.
\end{corollary}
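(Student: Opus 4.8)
The plan is to separate sufficiency from necessity, to reduce the sufficiency to the boundedness of the positive operator $T^{+}$ already obtained in Theorem \ref{thm:main3}, and to obtain the necessity by a dilation argument for the identity (\ref{eq:main31}) together with phase-corrected test functions for the two inequalities in (\ref{eq:main32}), following the proof of Corollary \ref{cor:main2}.

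For the sufficiency, assume (\ref{eq:main31}) and (\ref{eq:main32}) hold. These are exactly the hypotheses of Theorem \ref{thm:main3}, so $T^{+}$ is bounded from $L_{a}^{p,q}(\mathbb{R}_{+}^{2})$ into $L^{p,\infty}(\mathbb{R}_{+}^{2})$. Since $\Im z>0$ and $\Im w>0$ give $z-\bar w\neq 0$, one has $\bigl|(z-\bar w)^{-(1+\gamma)}\bigr|=|z-\bar w|^{-(1+\gamma)}$, whence the pointwise bound $|Tf(z)|\le T^{+}(|f|)(z)$ on $\mathbb{R}_{+}^{2}$. As $\|\cdot\|_{p,\infty}$ and $\|\cdot\|_{p,q,a}$ are monotone in the modulus of their argument, this yields $\|Tf\|_{p,\infty}\le \|T^{+}(|f|)\|_{p,\infty}\lesssim \|\,|f|\,\|_{p,q,a}=\|f\|_{p,q,a}$, so $T$ is bounded.

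For the necessity, suppose $T$ is bounded from $L_{a}^{p,q}(\mathbb{R}_{+}^{2})$ into $L^{p,\infty}(\mathbb{R}_{+}^{2})$. The relation (\ref{eq:main31}) follows by dilations exactly as in Lemma \ref{b3}: with $f_{R}(z)=f(Rz)$, the change of variables $w\mapsto Rw$ together with the homogeneity $(Rz-\overline{Rw})^{-(1+\gamma)}=R^{-(1+\gamma)}(z-\bar w)^{-(1+\gamma)}$ gives $Tf_{R}(z)=R^{\gamma-\beta-\alpha-1}Tf(Rz)$, hence $\|Tf_{R}\|_{p,\infty}=R^{\gamma-\beta-\alpha-1-\frac1p}\|Tf\|_{p,\infty}$ while $\|f_{R}\|_{p,q,a}=R^{-\frac{a+1}{q}-\frac1p}\|f\|_{p,q,a}$; letting $R$ run over $(0,\infty)$ in the boundedness inequality forces $\gamma-\beta-\alpha-1+\frac{a+1}{q}=0$. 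For the two inequalities the cancellation in the kernel $(z-\bar w)^{-(1+\gamma)}$ must be neutralised. As in the proof of Corollary \ref{cor:main2}, for a fixed $\zeta=s+it\in\mathbb{R}_{+}^{2}$ with $0<t\le 1$ put
\[
f_{\zeta}(x+iy):=\frac{\bigl(\zeta-\overline{x+iy}\bigr)^{1+\gamma}}{\bigl|\zeta-\overline{x+iy}\bigr|^{1+\gamma}}\,\chi_{[-\frac14,\frac14]}(x)\,\chi_{[1,2]}(y),
\]
so that $|f_{\zeta}|=\chi_{[-\frac14,\frac14]}(x)\chi_{[1,2]}(y)$ and $\|f_{\zeta}\|_{p,q,a}\simeq 1$ uniformly in $\zeta$. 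On the support of $f_{\zeta}$ the number $\zeta-\overline{u+iv}=(s-u)+i(t+v)$ stays in a fixed compact subset of the upper half-plane bounded away from $0$; hence there is $\delta=\delta(\gamma)>0$ such that for all $z$ with $|z-\zeta|<\delta$ and $\Im z>0$ the ratio of $(z-\overline{u+iv})^{1+\gamma}$ to $(\zeta-\overline{u+iv})^{1+\gamma}$ is $\simeq 1$ with small argument, so that the phases essentially cancel and
\[
|Tf_{\zeta}(z)|\ \simeq\ (\Im z)^{\alpha}\int_{1}^{2}\!\!\int_{-1/4}^{1/4}\frac{v^{\beta}}{|(s-u)+i(t+v)|^{1+\gamma}}\,du\,dv\ \gtrsim\ (\Im z)^{\alpha}.
\]
Integrating in $x$ over $\{\,|x-s|<\delta/2\,\}$ for each fixed $0<y<\delta/2$ and taking the supremum in $y$ gives $\|Tf_{\zeta}\|_{p,\infty}\gtrsim \sup_{0<y<\delta/2}y^{\alpha}$, which is finite only if $\alpha>0$ (the exclusion of $\alpha=0$ being handled as in the corresponding step of Lemma \ref{b3}); combined with (\ref{eq:main31}) this is the left inequality in (\ref{eq:main32}). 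The right inequality $a+1<q(\beta+1)$ follows by duality: boundedness of $T$ from $L_{a}^{p,q}$ into $L^{p,\infty}$ forces boundedness of $(T)^{*}$ from $L^{p',1}$ into $L_{a}^{p',q'}$, and $(T)^{*}$ has a kernel of the same modulus as $(T^{+})^{*}$; running the analogous phase-corrected test function through $(T)^{*}$, as in the last part of the proof of Lemma \ref{b3}, produces $q'(\beta-a)+a+1>0$, i.e. $a+1<q(\beta+1)$.

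The main obstacle is the necessity of the inequalities, and of $\alpha>0$ in particular: unlike for $T^{+}$, one cannot test $T$ with a nonnegative bump, since the oscillation of $(z-\bar w)^{-(1+\gamma)}$ destroys any lower bound on $Tf$. The device is to align the arguments of all kernel contributions at the centre $z=\zeta$ by the choice of $f_{\zeta}$, and then to observe that this lower bound persists on a ball around $\zeta$ whose radius $\delta$ depends on $\gamma$ only (because on the support of $f_{\zeta}$ the relevant complex numbers sit on the fixed scale $\simeq 1$), which upgrades the one-point estimate into a genuine lower bound for the $L^{p,\infty}$ norm. The remaining computations are routine repetitions of those in Lemmas \ref{b2}--\ref{b3} and Corollary \ref{cor:main2}.
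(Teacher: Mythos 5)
Your proposal is correct and follows essentially the same route as the paper: sufficiency via the pointwise domination $|Tf|\le T^{+}(|f|)$ and Theorem \ref{thm:main3}, the identity (\ref{eq:main31}) by dilations, and the inequalities in (\ref{eq:main32}) by testing against the phase-corrected indicator $\frac{(\zeta-\bar w)^{1+\gamma}}{|\zeta-\bar w|^{1+\gamma}}\chi\chi$ and its dual analogue, exactly as in the paper's proof of Corollary \ref{cor:main2} adapted to Lemma \ref{b3}. In fact you are slightly more careful than the paper, which only evaluates $Tf_{\zeta}$ at the single point $\zeta$, whereas you justify that the lower bound persists on a small ball so that it controls the $L^{p}(\mathrm{d}x)$ norm.
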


Taking $1=q<r<\infty$, we obtain also the following.
\begin{corollary}\label{cor:main4}
Let $1< p,r<\infty$ and let $b>-1$. Then the
 operator $T$ is bounded from
$L^{p,1}(\mathbb{R}_+^2)$ into $L_b^{p,r}(\mathbb{R}_+^2)$
if and only if (\ref{eq:main41}) and (\ref{eq:main42}) hold.
\end{corollary}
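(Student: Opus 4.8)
The plan is to derive Corollary \ref{cor:main4} from Corollary \ref{cor:main3} by duality, exactly as Theorem \ref{thm:main4} was obtained as the dual of Theorem \ref{thm:main3}. The sufficiency of \eqref{eq:main41}--\eqref{eq:main42} needs no extra argument: under those conditions Theorem \ref{thm:main4} (equivalently Lemma \ref{b11}) makes $T^{+}$ bounded from $L^{p,1}(\mathbb{R}_+^2)$ into $L_b^{p,r}(\mathbb{R}_+^2)$, and since $|T_{\a,\ba,\ga}f|\le T^{+}_{\a,\ba,\ga}|f|$ pointwise, $T$ is bounded as well; so only the necessity is at stake.

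So suppose $T=T_{\a,\ba,\ga}$ is bounded from $L^{p,1}(\mathbb{R}_+^2)$ into $L_b^{p,r}(\mathbb{R}_+^2)$. Using $\bigl(L^{p,1}(\mathbb{R}_+^2)\bigr)^{*}=L^{p',\infty}(\mathbb{R}_+^2)$ and $\bigl(L_b^{p,r}(\mathbb{R}_+^2)\bigr)^{*}=L_b^{p',r'}(\mathbb{R}_+^2)$ (legitimate since $1<p<\infty$ and $1<r<\infty$), this is equivalent to the boundedness of the adjoint $T^{*}$ from $L_b^{p',r'}(\mathbb{R}_+^2)$ into $L^{p',\infty}(\mathbb{R}_+^2)$. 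Pairing $L_b^{p,r}$ with $L_b^{p',r'}$ via $\int f\bar g\,y^b\,dV$ and $L^{p,1}$ with $L^{p',\infty}$ via $\int f\bar g\,dV$, Fubini's theorem gives
\Be
T^{*}g(u+iv)=v^{\ba}\int_{\mathbb{R}_+^2}\frac{g(x+iy)}{\overline{\bigl((x-u)+i(y+v)\bigr)^{1+\ga}}}\,y^{\a+b}\,dx\,dy .
\Ee
Taking complex conjugates removes the bar from the kernel but transposes the roles of the two variables; composing this with the measure-preserving involution $z\mapsto-\bar z$ of $\mathbb{R}_+^2$ — which fixes imaginary parts and all the norms $\|\cdot\|_{p,q,\nu}$ while sending $z-\bar w$ to $w-\bar z$ — identifies $T^{*}$, up to these harmless operations, with the genuine Bergman-type operator $T_{\ba,\,\a+b,\,\ga}$. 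Hence $T^{*}$ is bounded from $L_b^{p',r'}(\mathbb{R}_+^2)$ into $L^{p',\infty}(\mathbb{R}_+^2)$ if and only if $T_{\ba,\,\a+b,\,\ga}$ is.

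It then remains to apply Corollary \ref{cor:main3} to $T_{\ba,\,\a+b,\,\ga}$, with the parameters $\a,\ba,\ga,a,p,q$ of that corollary replaced respectively by $\ba,\,\a+b,\,\ga,\,b,\,p',\,r'$ (its hypotheses $b>-1$, $1<p'<\infty$, $1<r'<\infty$ all hold), which gives boundedness precisely when
\Be
\ga=\ba+(\a+b)+1-\frac{b+1}{r'}\qquad\text{and}\qquad \ba>0,\quad b+1<r'(\a+b+1).
\Ee
Since $\tfrac{b+1}{r'}=(b+1)-\tfrac{b+1}{r}$, the identity collapses to $\ga=\a+\ba+\tfrac{b+1}{r}$, which is \eqref{eq:main41}; feeding this back, $b+1<r'(\a+b+1)$ becomes $-\tfrac{b+1}{r}<\a$, i.e.\ $\a+\tfrac{b+1}{r}>0$, i.e.\ $\ga>\ba$, and together with $\ba>0$ this is exactly \eqref{eq:main42}. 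This closes the proof.

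The one delicate point is the identification in the second paragraph: the adjoint $T^{*}$ a priori carries the kernel $\overline{(z-\bar w)^{1+\ga}}$, which is built from $\bar z-w$ rather than $w-\bar z$, and one must check carefully that the conjugation-plus-reflection genuinely produces a member of the family $T_{\a',\ba',\ga'}$ with the indicated weights, and that boundedness and the parameter relations transport correctly through the Hölder conjugates $p',r'$. Once that is settled everything else is routine algebra. If one prefers to bypass duality, the necessity can instead be obtained directly as in the proof of Corollary \ref{cor:main2}: the dilation $f\mapsto f(R\,\cdot)$ forces \eqref{eq:main41}, while the unimodular test function $f(w)=\dfrac{(\zeta-\bar w)^{1+\ga}}{|\zeta-\bar w|^{1+\ga}}\chi_{[-1/4,1/4]}(\Re w)\chi_{[1,2]}(\Im w)$, whose $T$-image evaluated at $z=\zeta$ reduces to $T^{+}$ applied to a characteristic function, then yields \eqref{eq:main42} through Lemma \ref{lem:betafunctionconditions}.
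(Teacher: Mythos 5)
Your argument is correct, but it follows a different route from the paper. The paper disposes of Corollary \ref{cor:main4} the same way as Corollary \ref{cor:main2}: sufficiency from the corresponding $T^{+}$ result (Theorem \ref{thm:main4}) via $|Tf|\le T^{+}|f|$, and necessity by testing $T$ itself with a dilation (to force \eqref{eq:main41}) and with the unimodular function $f(w)=\frac{(\zeta-\bar w)^{1+\gamma}}{|\zeta-\bar w|^{1+\gamma}}\chi_{[-1/4,1/4]}(\Re w)\chi_{[1,2]}(\Im w)$, whose image at $z=\zeta$ reduces to the positive operator, so that the $T^{+}$ necessity computations apply verbatim. You instead dualize $T$ itself: you pass to $T^{*}\colon L_b^{p',r'}\to L^{p',\infty}$ and, via complex conjugation and the reflection $z\mapsto-\bar z$, identify $T^{*}$ isometrically with $T_{\beta,\alpha+b,\gamma}$, then invoke Corollary \ref{cor:main3} with parameters $(\beta,\alpha+b,\gamma,b,p',r')$; I checked the identification (indeed $\rho(w)-\overline{\rho(z)}=z-\bar w$ for $\rho(z)=-\bar z$, and $\overline{T^{*}\bar g}=\bigl(T_{\beta,\alpha+b,\gamma}(g\circ\rho)\bigr)\circ\rho$) and the parameter algebra, and both are right, with no circularity since Corollary \ref{cor:main3} is established independently. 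Your route buys a short derivation that reuses an already stated two-sided result and avoids redoing test-function estimates at $q=1$; the paper's route avoids manipulating adjoints of the non-positive kernel (and the attendant branch/reflection bookkeeping, which you correctly flag as the delicate point) by exploiting positivity at a single diagonal point. One small caveat about your closing alternative: the single test function you mention yields \eqref{eq:main41} and the inequality $\gamma>\beta$ (from $-r\alpha<b+1$), but the condition $\beta>0$ does not come from it directly; in the paper's scheme it arises from the adjoint-side test function (as in the second half of the proof of Lemma \ref{b2}, or as in Lemma \ref{b5}), so the direct route still needs that extra step. This does not affect your main proof, which is complete as written.
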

In particular, we have the following.
\begin{corollary}\label{cor:main41}
Let $1< p,r<\infty$ and let $b,\beta>-1$. Then the
 operator $P_\beta$ is a bounded projection from
$L^{p,1}(\mathbb{R}_+^2)$ into $A_b^{p,r}(\mathbb{R}_+^2)$
if and only if $\beta>0$ and $r=b+1$.
\end{corollary}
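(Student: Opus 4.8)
The plan is to obtain Corollary~\ref{cor:main41} as the special case of Corollary~\ref{cor:main4} that corresponds to the Bergman projection. First I would recall that, by the very definition of $P_\nu$, the weighted Bergman projection $P_\beta$ coincides, up to the multiplicative constant $c_\beta=\frac{2^\beta}{\pi}(\beta+1)e^{-i\beta\pi/2}\neq 0$, with the operator $T_{\alpha,\beta,\gamma}$ for the choice $\alpha=0$ and $\gamma=\beta+1$; indeed, with this choice $1+\gamma=2+\beta$, so the two integral kernels agree. Since multiplication by the nonzero constant $c_\beta$ changes neither boundedness nor the operator norm beyond a fixed factor, $P_\beta$ is bounded from $L^{p,1}(\mathbb{R}_+^2)$ into $L_b^{p,r}(\mathbb{R}_+^2)$ if and only if $T_{0,\beta,\beta+1}$ is.

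Next I would apply Corollary~\ref{cor:main4} to $T_{0,\beta,\beta+1}$. Condition (\ref{eq:main41}), namely $\gamma=\alpha+\beta+\frac{b+1}{r}$, reads here $\beta+1=\beta+\frac{b+1}{r}$, i.e. $\frac{b+1}{r}=1$, i.e. $r=b+1$. Condition (\ref{eq:main42}), namely $\gamma>\beta>0$, reads $\beta+1>\beta>0$; the first inequality is automatic and what remains is $\beta>0$. Hence the pair of conditions in Corollary~\ref{cor:main4} is, for these parameters, equivalent to the single requirement ``$\beta>0$ and $r=b+1$'', and therefore $P_\beta$ is bounded from $L^{p,1}(\mathbb{R}_+^2)$ into $L_b^{p,r}(\mathbb{R}_+^2)$ if and only if $\beta>0$ and $r=b+1$.

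It remains to upgrade the target from $L_b^{p,r}(\mathbb{R}_+^2)$ to the holomorphic subspace $A_b^{p,r}(\mathbb{R}_+^2)$ and to record the projection property. For the first point I would note that when $f\in L^{p,1}(\mathbb{R}_+^2)$ the function $w\mapsto f(w)(z-\bar w)^{-(2+\beta)}(\Im w)^\beta$ is absolutely integrable for each fixed $z\in\mathbb{R}_+^2$ --- this is contained in the estimates used to prove the boundedness above --- while $z\mapsto(z-\bar w)^{-(2+\beta)}$ is holomorphic with all $z$-derivatives bounded uniformly for $z$ in a compact subset of $\mathbb{R}_+^2$; differentiating under the integral sign then shows $P_\beta f$ is holomorphic, so $P_\beta$ maps $L^{p,1}(\mathbb{R}_+^2)$ into $A_b^{p,r}(\mathbb{R}_+^2)$. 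For the projection property I would invoke the reproducing formula $g(z)=c_\beta\int_{\mathbb{R}_+^2}g(w)(z-\bar w)^{-(2+\beta)}dV_\beta(w)$, which holds for $g\in A_\beta^2(\mathbb{R}_+^2)$ and extends, by the usual density argument, to every holomorphic $g$ for which the defining integral converges absolutely; in particular $P_\beta$ fixes the holomorphic functions in its domain and hence acts as a bounded projection onto $A_b^{p,r}(\mathbb{R}_+^2)$.

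This is essentially a direct specialization, so I do not anticipate a genuine obstacle. The only points that need a little attention are the arithmetic check that $\beta>0$ already absorbs the constraint $\gamma>\beta$, and the justification that one may differentiate $P_\beta f$ under the integral sign --- but the latter uses only the local integrability already established in proving boundedness, not any new estimate. If one preferred to route everything through the positive operator, one could alternatively start from Theorem~\ref{thm:main4} for $T^+$ and use the pointwise bound $|P_\beta f|\le |c_\beta|\,T^+_{0,\beta,\beta+1}(|f|)$, which makes the sufficiency half completely transparent.
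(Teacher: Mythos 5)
Your proposal is correct and follows essentially the same route as the paper: Corollary \ref{cor:main41} is obtained by specializing Corollary \ref{cor:main4} to $\alpha=0$, $\gamma=\beta+1$ (so that $T_{0,\beta,\beta+1}=c_\beta^{-1}P_\beta$), and the two conditions (\ref{eq:main41})--(\ref{eq:main42}) reduce exactly to $r=b+1$ and $\beta>0$. Your additional remarks on holomorphy of $P_\beta f$ and the reproducing/projection property are sound and in fact supply details the paper leaves implicit.
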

In the limit case $q=1$ and $r=\infty$, we have the following.
\begin{corollary}\label{cor:main5}
Let $1< p<\infty$. Then the
operator $T$ is bounded from
$L^{p,1}(\mathbb{R}_+^2)$ into $L^{p,\infty}(\mathbb{R}_+^2)$
if and only if (\ref{eq:main51}) and (\ref{eq:main52}) hold.
\end{corollary}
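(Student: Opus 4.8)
The plan is to obtain Corollary~\ref{cor:main5} from Theorem~\ref{thm:main5}, a dilation argument, and the phase‑removing test function already exploited in the proof of Corollary~\ref{cor:main2}. For the sufficiency, assume $\gamma=\alpha+\beta$ and $\alpha,\beta>0$: by Theorem~\ref{thm:main5} the positive operator $T^+$ is bounded from $L^{p,1}(\mathbb{R}_+^2)$ into $L^{p,\infty}(\mathbb{R}_+^2)$, and since $|Tf(z)|\le (\Im z)^\alpha\int_{\mathbb{R}_+^2}\frac{|f(w)|}{|z-\bar w|^{1+\gamma}}(\Im w)^\beta\,dV(w)=T^+|f|(z)$ pointwise, we get $\|Tf\|_{p,\infty}\le\|T^+|f|\|_{p,\infty}\lesssim\|f\|_{p,1}$, so $T$ is bounded.

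For the necessity, suppose $T$ is bounded from $L^{p,1}(\mathbb{R}_+^2)$ into $L^{p,\infty}(\mathbb{R}_+^2)$. First I would recover $\gamma=\alpha+\beta$ by the dilation argument of the first part of the proof of Lemma~\ref{b2} (compare Lemma~\ref{b4}): with $f_R(z):=f(Rz)$ one has $\|f_R\|_{p,1}=R^{-1-\frac1p}\|f\|_{p,1}$ and $Tf_R(z)=R^{\gamma-\alpha-\beta-1}Tf(Rz)$, hence $\|Tf_R\|_{p,\infty}=R^{\gamma-\alpha-\beta-1-\frac1p}\|Tf\|_{p,\infty}$; as the boundedness inequality must hold for all $R>0$, the powers of $R$ must coincide, forcing $\gamma=\alpha+\beta$. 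To get $\alpha,\beta>0$ I would, for $\zeta=s+it\in\mathbb{R}_+^2$, use the test function
$$f_\zeta(x+iy)=\frac{((\zeta-x)+iy)^{1+\gamma}}{|(\zeta-x)+iy|^{1+\gamma}}\,\chi_{[-1/4,1/4]}(x)\,\chi_{[1,2]}(y).$$
Since $\gamma$ is real the prefactor is unimodular, so $|f_\zeta|=\chi_{[-1/4,1/4]}\chi_{[1,2]}$ and $\|f_\zeta\|_{p,1}$ is a constant independent of $\zeta$; moreover, evaluating $Tf_\zeta$ at $z=\zeta$ makes the phase of the kernel cancel the phase built into $f_\zeta$, leaving the positive quantity
$$Tf_\zeta(\zeta)=t^\alpha\int_1^2\!\!\int_{-1/4}^{1/4}\frac{v^\beta}{|(s-u)+i(t+v)|^{1+\gamma}}\,du\,dv,$$
which is exactly the expression that drives the necessity proof for $T^+$. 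From here the argument should follow the lines of Lemma~\ref{b4} (and Lemma~\ref{b3}) to yield $\alpha>0$; for $\beta>0$ I would pass to the adjoint, noting that $T^*$ equals, up to a unimodular constant, the operator of the same type with $\alpha$ and $\beta$ interchanged, and that $T\colon L^{p,1}\to L^{p,\infty}$ is bounded if and only if $T^*\colon L^{p',1}\to L^{p',\infty}$ is, so that the preceding step applied to $T^*$ gives $\beta>0$.

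The step I expect to be the main obstacle is the last one: turning the single‑point value $Tf_\zeta(\zeta)\simeq t^\alpha$ (valid for $|s|\le\frac14$, $0<t\le1$) into genuine control of the mixed norm $\|Tf_\zeta\|_{p,\infty}$ and, more delicately, extracting the \emph{strict} inequalities $\alpha,\beta>0$ rather than merely $\alpha,\beta\ge0$ — the same subtlety that appears in Lemma~\ref{b3} and Lemma~\ref{b4}. I would handle it by using that $z\mapsto(\Im z)^{-\alpha}Tf_\zeta(z)$ is holomorphic on $\mathbb{R}_+^2$, so that a sub‑mean‑value inequality on a disc of radius $\asymp t$ about $\zeta$ (equivalently, the fact that $Tf_\zeta$ stays comparable to $t^\alpha$ on a horizontal segment through $\zeta$) bounds the relevant $L^p(\mathrm{d}x)$ integral from below, and then combining this with the relation $\gamma=\alpha+\beta$ and the behaviour of $Tf_\zeta$ as $t\to0^+$, exactly as in those lemmas. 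The remaining verifications are routine.
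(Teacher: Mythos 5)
Your sufficiency step and the dilation step coincide with what the paper intends (its ``proof'' of this corollary is only the phrase ``using the same idea as above'', referring to Corollary \ref{cor:main2}), and they are fine. Your way of bridging the single--point evaluation is also essentially workable, but only in the segment form: averaging $(\Im z)^{-\alpha}Tf_\zeta$ over a disc of radius $\asymp t$ costs a factor $t^{1/p}$ and yields only $\alpha\ge -1/p$, whereas the observation that the phase of the integrand moves by at most $(1+|\gamma|)|x-s|/(t+v)$ shows $|Tf_\zeta(x+it)|\gtrsim t^{\alpha}$ for $|x-s|\le\delta$ with $\delta$ small depending only on $\gamma$, whence $\|Tf_\zeta\|_{p,\infty}\gtrsim t^{\alpha}$ and, since $\|f_\zeta\|_{p,1}\simeq 1$, the bound $\sup_{0<t\le 1}t^{\alpha}<\infty$, i.e. $\alpha\ge 0$ (and $\beta\ge 0$ via the adjoint).

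The step you yourself flagged as the main obstacle is, however, a genuine gap, and it cannot be closed: neither this argument nor any other can upgrade $\alpha\ge0$, $\beta\ge0$ to the strict inequalities \eqref{eq:main52}, because the strict inequalities are not necessary. Take $\alpha=0$ and $\beta=\gamma>0$, so that \eqref{eq:main51} holds. The Hilbert-type operator $H_{0,\beta,\beta}$ is bounded from $L^1((0,\infty))$ to $L^\infty((0,\infty))$, since its kernel $\bigl(y/(x+y)\bigr)^{\beta}$ is bounded by $1$; applying the paper's Lemma \ref{b1} (whose proof needs only $\gamma>0$) with $q=1$, $r=\infty$, $a=b=0$, the operator $T^{+}_{0,\beta,\beta}$, and hence $T_{0,\beta,\beta}$, is bounded from $L^{p,1}(\mathbb{R}_+^2)$ to $L^{p,\infty}(\mathbb{R}_+^2)$ although $\alpha=0$ (symmetrically, $\beta=0$, $\alpha=\gamma>0$ also works). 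So what your test-function argument proves --- and all that can be proved --- is $\gamma=\alpha+\beta$ together with $\alpha\ge0$, $\beta\ge0$. Be aware that the paper's own sketch has the same defect: the necessity in Lemma \ref{b3}, on which Lemma \ref{b4} and this corollary rely, deduces ``$\alpha>0$'' from $\sup_{0<y<1}y^{\alpha}<\infty$, which only gives $\alpha\ge0$; for $q>1$ strictness can be recovered through the adjoint's integrability at infinity, but in the present case $q=1$, $r=\infty$ it genuinely fails, so your proposal faithfully reproduces the paper's approach including its weak point rather than committing a new error.
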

We also obtain the following.
\begin{corollary}\label{cor:main6}
Let $1< p<\infty$. Then the
operator $T$ is bounded on
$L^{p,1}(\mathbb{R}_+^2)$ if and only if (\ref{eq:main61}) and (\ref{eq:main62}) hold.
\end{corollary}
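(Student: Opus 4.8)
The plan is to derive Corollary~\ref{cor:main6} from its $T^{+}$-counterpart, Theorem~\ref{thm:main6}, in the same way Corollary~\ref{cor:main2} was derived from Theorem~\ref{thm:main2}. For the sufficiency, assume that (\ref{eq:main61}) and (\ref{eq:main62}) hold; these are precisely the conditions of Theorem~\ref{thm:main6}(b), so $T^{+}$ is bounded on $L^{p,1}(\mathbb{R}_+^2)$ (equivalently one may quote Lemma~\ref{b11} with $p=q=r=1$ and $a=b=0$). Since $|Tf(z)|\le T^{+}|f|(z)$ for every $z\in\mathbb{R}_+^2$ (because $(\Im z)^{\alpha}>0$ and $|(z-\bar w)^{1+\gamma}|=|z-\bar w|^{1+\gamma}$) and the $L^{p,1}$-norm is monotone with respect to the pointwise order, $T$ is bounded on $L^{p,1}(\mathbb{R}_+^2)$; this is the remark recorded in the introduction.

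For the necessity, assume that $T$ is bounded on $L^{p,1}(\mathbb{R}_+^2)$. The relation (\ref{eq:main61}) is obtained from the dilation argument used in the first part of the proof of Lemma~\ref{b2}: writing $f_R(z):=f(Rz)$ one has $\|f_R\|_{L^{p,1}}=R^{-1-\frac{1}{p}}\|f\|_{L^{p,1}}$ and $Tf_R(z)=R^{\gamma-\beta-\alpha-1}Tf(Rz)$, hence $\|Tf_R\|_{L^{p,1}}=R^{\gamma-\beta-\alpha-1-1-\frac{1}{p}}\|Tf\|_{L^{p,1}}$, and boundedness uniformly in $R>0$ forces $\gamma=\alpha+\beta+1$. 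For the two inequalities in (\ref{eq:main62}) one cannot simply feed $\chi_{[-1/4,1/4]}(\Re w)\chi_{[1,2]}(\Im w)$ into $T$, since the oscillation of the kernel may cancel the integral; instead, exactly as in the proof of Corollary~\ref{cor:main2}, one uses the phase-unwinding test functions. Fix $\zeta=s+it\in\mathbb{R}_+^2$ with $s$ and $t$ small and set $f_\zeta(w):=\frac{(\zeta-\bar w)^{1+\gamma}}{|\zeta-\bar w|^{1+\gamma}}\,\chi_{[-\eta,\eta]}(\Re w)\chi_{[1,2]}(\Im w)$ with $\eta=\eta(\gamma)>0$ small; then $|f_\zeta(w)|\le\chi_{[-\eta,\eta]}(\Re w)\chi_{[1,2]}(\Im w)$, so $\|f_\zeta\|_{L^{p,1}}\lesssim 1$ uniformly in $\zeta$, while $z-\bar w=\zeta-\bar w$ at $z=\zeta$ and, for $z$ lying below the support of $f_\zeta$ and close to the real axis, $\Re\left[\left(\frac{\zeta-\bar w}{z-\bar w}\right)^{1+\gamma}\right]$ is bounded below by a positive constant. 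Consequently $|Tf_\zeta(x+iy)|\gtrsim y^{\alpha}$ on a rectangle $\{|x-s|\le\eta,\ 0<y<\delta\}$, whence $\int_0^{\delta}y^{\alpha}\,dy\lesssim\|Tf_\zeta\|_{L^{p,1}}\lesssim 1$ and therefore $\alpha>-1$, just as in Lemma~\ref{b2}. Finally, $T$ bounded on $L^{p,1}(\mathbb{R}_+^2)$ is equivalent to its adjoint $T^{*}g(w)=(\Im w)^{\beta}\int_{\mathbb{R}_+^2}\frac{(\Im z)^{\alpha}g(z)}{(\bar z-w)^{1+\gamma}}\,dV(z)$ being bounded on $L^{p',\infty}(\mathbb{R}_+^2)$; since $T^{*}$ is an operator of the same type as $T$ with the roles of $\alpha$ and $\beta$ interchanged, the same phase-unwinding argument applied to $T^{*}$ (now in the form used in the proof of Lemma~\ref{b5}, which is modeled on Lemma~\ref{b3}) yields $\beta>0$.

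The step I expect to be the main obstacle is precisely the passage from the positive operator $T^{+}$, for which $\chi_{[-1/4,1/4]}(\Re w)\chi_{[1,2]}(\Im w)$ already produces the right lower bound on a full rectangle, to the oscillating $T$: one must first choose the phase-unwound test functions $f_\zeta$ and then control the argument of $\left(\frac{\zeta-\bar w}{z-\bar w}\right)^{1+\gamma}$ — which is where the aperture $\eta$ has to be taken small in terms of $\gamma$ — so that the lower bound $|Tf_\zeta|\gtrsim y^{\alpha}$ survives on a set of positive measure rather than merely at the single point $z=\zeta$. Once this is arranged, every remaining estimate is identical to the $T^{+}$ case, so no ingredient beyond Theorem~\ref{thm:main6}, Corollary~\ref{cor:main2} and Lemmas~\ref{b2}, \ref{b3}, \ref{b5} is needed.
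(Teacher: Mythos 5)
Your proposal is correct and matches the paper's intended argument: the paper derives Corollary \ref{cor:main6} by ``the same idea as above'', i.e.\ sufficiency from Theorem \ref{thm:main6} via the pointwise bound $|Tf|\le T^{+}|f|$, and necessity via dilations, the phase-unwound test functions from the proof of Corollary \ref{cor:main2}, and duality with $L^{p',\infty}(\mathbb{R}_+^2)$, exactly as you do. If anything you supply more detail than the paper (the positive-measure rectangle on which the phase factor is controlled, rather than evaluation only at the point $\zeta$); the minor strictness issue that $\sup_{0<v<\delta}v^{\beta}<\infty$ literally yields $\beta\ge 0$ rather than $\beta>0$ is inherited from the paper's own Lemmas \ref{b3} and \ref{b5}.
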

Consequently, taking again $\alpha=0$ and $\gamma=\beta+1$, we obtain the following.
\begin{corollary}\label{cor:main61}
Let $1< p<\infty$, and $\beta>-1$. Then the
operator $P_\beta$ is a bounded projection from
$L^{p,1}(\mathbb{R}_+^2)$ into $A^{p,1}(\mathbb{R}_+^2)$ if and only if $\beta>0$.
\end{corollary}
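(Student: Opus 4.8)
The plan is to realize $P_\beta$ as a member of the family $T_{\alpha,\beta,\gamma}$ and then read off the answer from Corollary~\ref{cor:main6}. Comparing the integral formula for $P_\beta$ with that of $T_{\alpha,\beta,\gamma}$, one has $P_\beta=c_\beta\,T_{0,\beta,\beta+1}$, i.e. the choice $\alpha=0$ and $\gamma=\beta+1$ already noted in the text; since $c_\beta\neq 0$ (indeed $|c_\beta|=\frac{2^\beta}{\pi}(\beta+1)>0$ for $\beta>-1$), the operator $P_\beta$ is bounded on $L^{p,1}(\mathbb{R}_+^2)$ precisely when $T_{0,\beta,\beta+1}$ is. Substituting $\alpha=0$, $\gamma=\beta+1$ into Corollary~\ref{cor:main6}: the relation \eqref{eq:main61}, $\gamma=\alpha+\beta+1$, holds automatically, and the constraint \eqref{eq:main62}, $\alpha>-1$ and $\beta>0$, collapses to $\beta>0$ because $0>-1$. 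Hence $P_\beta$ is bounded on $L^{p,1}(\mathbb{R}_+^2)$ if and only if $\beta>0$, which handles both implications of the statement at the level of boundedness.

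It then remains to upgrade ``bounded operator on $L^{p,1}(\mathbb{R}_+^2)$'' to ``bounded projection onto $A^{p,1}(\mathbb{R}_+^2)$'' in the regime $\beta>0$. First, for every $f\in L^{p,1}(\mathbb{R}_+^2)$ the function $P_\beta f$ is holomorphic on $\mathbb{R}_+^2$, since the kernel $(z-\bar{w})^{-(2+\beta)}$ is holomorphic in $z$ and differentiation under the integral sign is justified by locally uniform bounds on the kernel together with the boundedness just established; combined with $P_\beta f\in L^{p,1}(\mathbb{R}_+^2)$ this gives $P_\beta f\in A^{p,1}(\mathbb{R}_+^2)$. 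Second, $P_\beta$ fixes $A^{p,1}(\mathbb{R}_+^2)$ pointwise: the reproducing identity $g(z)=c_\beta\int_{\mathbb{R}_+^2}g(w)(z-\bar{w})^{-(2+\beta)}\,dV_\beta(w)$ holds on $A_\beta^2(\mathbb{R}_+^2)$, and a density argument — approximating an arbitrary $g\in A^{p,1}(\mathbb{R}_+^2)$ in the $L^{p,1}$-norm by elements of $A^{p,1}(\mathbb{R}_+^2)\cap A_\beta^2(\mathbb{R}_+^2)$ as in \cite{BBGNPR}, and invoking the boundedness of $P_\beta$ — propagates it to all of $A^{p,1}(\mathbb{R}_+^2)$. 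Together these two facts yield $P_\beta^2=P_\beta$ and $\operatorname{ran}P_\beta=A^{p,1}(\mathbb{R}_+^2)$, so $P_\beta$ is a bounded projection onto $A^{p,1}(\mathbb{R}_+^2)$.

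All the computational content has already been absorbed into Corollary~\ref{cor:main6}, so the substitution $\alpha=0$, $\gamma=\beta+1$ is entirely routine; the only genuinely non-formal point is the density/reproducing-kernel step, namely checking that the reproducing property, known a priori only on $A_\beta^2(\mathbb{R}_+^2)$, persists on the mixed-norm space $A^{p,1}(\mathbb{R}_+^2)$. That is the place where I would be most careful, but it is standard and parallels the atomic-decomposition and duality machinery of \cite{BBGNPR}.
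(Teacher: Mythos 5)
Your proposal is correct and follows the paper's own route: the paper obtains this corollary precisely by specializing Corollary \ref{cor:main6} to $\alpha=0$, $\gamma=\beta+1$, exactly as you do. Your additional verification that $P_\beta$ actually maps into $A^{p,1}(\mathbb{R}_+^2)$ and reproduces it (via holomorphy of the image and a density argument for the reproducing identity, as in \cite{BBGNPR}) is a sound fleshing-out of a step the paper leaves implicit, not a different argument.
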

The dual version of Corollary \ref{cor:main6} is the following.
\begin{corollary}\label{cor:main7}
Let $1< p<\infty$. Then the
 operator $T$ is bounded on
$L^{p,\infty}(\mathbb{R}_+^2)$
if and only if (\ref{eq:main61}) and (\ref{eq:main72}) hold.
\end{corollary}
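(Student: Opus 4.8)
The plan is to obtain Corollary~\ref{cor:main7} as the dual of Corollary~\ref{cor:main6}, in the same spirit in which Theorem~\ref{thm:main7} is the dual of Theorem~\ref{thm:main6}. For the sufficiency, assume $\gamma=\alpha+\beta+1$, $\alpha>0$ and $\beta>-1$; then either invoke Theorem~\ref{thm:main7} together with the pointwise domination $|T_{\alpha,\beta,\gamma}f(z)|\le T^{+}_{\alpha,\beta,\gamma}|f|(z)$ and $\||f|\|_{p,\infty}=\|f\|_{p,\infty}$, or obtain it directly from the duality used below. Either way, $T_{\alpha,\beta,\gamma}$ is bounded on $L^{p,\infty}(\mathbb{R}_+^2)$.

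For the necessity I would first identify the formal adjoint of $T=T_{\alpha,\beta,\gamma}$ for the pairing $\langle f,g\rangle=\int_{\mathbb{R}_+^2}f\bar g\,dV$. A direct kernel computation, using $\overline{(z-\bar w)^{-(1+\gamma)}}=(\bar z-w)^{-(1+\gamma)}$ and $\bar z-w=-(w-\bar z)$, gives $T^{*}_{\alpha,\beta,\gamma}=c\,T_{\beta,\alpha,\gamma}$ for a unimodular constant $c$ (this is the computation already carried out for $T^{+}$ in the proofs of Lemma~\ref{b2} and Lemma~\ref{b3}, now with the roles of $\alpha$ and $\beta$ interchanged). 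Next I would record the relevant duality of mixed norm spaces: fibrewise H\"older gives $|\langle f,g\rangle|\le\|f\|_{p',1}\|g\|_{p,\infty}$, and since the inner $L^{p}$--$L^{p'}$ pairing is exact and every $L^{1}$ density realizes its norm against $L^{\infty}$, the natural map $L^{p',1}(\mathbb{R}_+^2)\to\big(L^{p,\infty}(\mathbb{R}_+^2)\big)^{*}$ is an isometric embedding. Hence, for the densely defined integral operators at hand, $T_{\alpha,\beta,\gamma}$ is bounded on $L^{p,\infty}(\mathbb{R}_+^2)$ if and only if its adjoint $c\,T_{\beta,\alpha,\gamma}$ — equivalently $T_{\beta,\alpha,\gamma}$ itself — is bounded on $L^{p',1}(\mathbb{R}_+^2)$. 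Applying Corollary~\ref{cor:main6} with $\alpha$ and $\beta$ interchanged and $p$ replaced by $p'$ (which again runs over $(1,\infty)$), this happens precisely when $\gamma=\beta+\alpha+1$ and $\beta>-1$, $\alpha>0$, i.e. when (\ref{eq:main61}) and (\ref{eq:main72}) hold; this closes the equivalence.

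Alternatively, the necessity can be obtained directly, mimicking the proofs of Lemma~\ref{b3}--Lemma~\ref{b5} (scaling for the identity $\gamma=\alpha+\beta+1$, a test function for the inequalities) but replacing the characteristic test function by the oscillating one $f(x+iy)=\frac{(\zeta-x+iy)^{1+\gamma}}{|\zeta-x+iy|^{1+\gamma}}\chi_{[-1/4,1/4]}(x)\chi_{[1,2]}(y)$ of the proof of Corollary~\ref{cor:main2}, which neutralizes the oscillation of the kernel $(z-\bar w)^{-(1+\gamma)}$; I would nonetheless prefer the duality route as it is shorter. The main obstacle is precisely making the duality step rigorous: unlike in the reflexive range, $L^{p,\infty}$ is not the dual of $L^{p',1}$ but only contains it in its dual, so one must check that $L^{p',1}\hookrightarrow(L^{p,\infty})^{*}$ is isometric (exactness of the fibrewise $L^{p}$--$L^{p'}$ duality, plus the fact that an $L^{1}$ density attains its norm against $L^{\infty}$) and interpret ``$T$ bounded on $L^{p,\infty}$'' as boundedness of the a priori densely defined operator, so that testing against $L^{p',1}$ is legitimate. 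Everything else is the routine kernel identification $T^{*}_{\alpha,\beta,\gamma}=c\,T_{\beta,\alpha,\gamma}$ and a verbatim invocation of Corollary~\ref{cor:main6}.
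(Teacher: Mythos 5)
Your proposal is correct and follows essentially the route the paper intends: the paper simply declares Corollary \ref{cor:main7} to be the dual version of Corollary \ref{cor:main6}, i.e.\ sufficiency from the positive-operator result (Theorem \ref{thm:main7} together with $|Tf|\le T^{+}|f|$) and necessity by passing to the adjoint $T^{*}_{\alpha,\beta,\gamma}=c\,T_{\beta,\alpha,\gamma}$ on $L^{p',1}(\mathbb{R}_+^2)$. Your write-up in fact supplies more of the duality details (the isometric embedding $L^{p',1}\hookrightarrow (L^{p,\infty})^{*}$ and the interpretation of boundedness via test functions, where ``densely defined'' should really be ``defined on $C_c^\infty$ and tested against it'') than the paper records.
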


\bibliographystyle{plain}

\end{document}